\pgfplotsset{compat=1.15}
\newcommand\m[1]{\begin{bmatrix}#1\end{bmatrix}} 
\newcommand\sm[1]{\begin{bsmallmatrix}#1\end{bsmallmatrix}}
\newcommand\sabcd{\sm{a & b \\ c & d}}
\newcommand{\Z}{\mathbb{Z}}
\newcommand{\Q}{\mathbb{Q}}
\newcommand{\F}{\mathbb{F}}
\newcommand{\GL}{\textnormal{GL}}
\newcommand{\AGL}{\textnormal{AGL}}
\DeclareMathOperator{\Gal}{Gal}
\newtheorem{theorem}{Theorem}[]
\newtheorem{lemma}[theorem]{Lemma}
\newtheorem{proposition}[theorem]{Proposition}
\newtheorem{corollary}[theorem]{Corollary}
\newtheorem*{question}{Question}
\theoremstyle{definition}
\newtheorem{definition}[theorem]{Definition}
\theoremstyle{remark}
\newtheorem{remark}[theorem]{Remark}
\numberwithin{theorem}{section}
\begin{document}
\title{Degrees of isogenies over prime degree number fields of non-CM elliptic curves with rational $j$-invariant} %%%%%%%%%%%%
\author{Ivan Novak}
\date{\today}
\address{University of Zagreb, Bijeni\v{c}ka Cesta 30, 10000 Zagreb, Croatia}
\email{ivan.novak@math.hr}

\thanks{The author was supported by the project “Implementation of cutting-edge research and its application as part of the Scientific Center of Excellence for Quantum and Complex Systems, and Representations of Lie Algebras“, PK.1.1.02, European Union, European Regional Development Fund and by the Croatian Science Foundation under the project no. IP-2022-10-5008.}
\maketitle

\let\thefootnote\relax
%\footnotetext{MSC2020: Primary 00A05, Secondary 00A66.} %%%%%%%%%%

\begin{abstract}
We determine all possible degrees of cyclic isogenies of non-CM elliptic curves with rational $j$-invariant over number fields of degree $p$, where $p$ is an odd prime. The question had been answered for $p=2$, so this paper completes the classification in case when the degree of the number field is prime. 
\end{abstract} %%%%%%%%%

\bigskip

\section{Introduction}

\noindent Let $K$ be a number field. For a positive integer $n$, the $K$-rational points on the modular curve $X_0(n)$ correspond to elliptic curves together with a cyclic isogeny of degree $n$ defined over $K$. Thus, finding a point of some fixed degree $d$ on $X_0(n)$ is equivalent to finding an elliptic curve over a degree $d$ number field together with an $n$-isogeny defined over the same field.

A question which naturally arises is to find all possible degrees of isogenies which arise for elliptic curves belonging to some class. For example, one may look at the class of all elliptic curves defined over a degree $d$ number field for some fixed positive integer $d$. This is equivalent to finding all positive integers $n$ for which the curve $X_0(n)$ has a point of degree $d$. More generally, one may try finding all degree $d$ points on $X_0(n)$.

This is in general a difficult task for $d>1$. The case $d=1$, i.e. finding all rational points on $X_0(n)$, has been done by Mazur \cite{mazur} and Kenku (see e.g. \cite{Kenku}). The case $d>1$ is much more difficult. Some progress has been made in computing quadratic points on $X_0(n)$ (\cite{nikola}).

A similar question which is easier to answer, and which is more specific, is to describe all points on $X_0(n)$ of some degree $d$ for which the corresponding $j$-invariant is rational. More precisely, we are interested in the following question.

\begin{question}
Let $d$ be a fixed positive integer. Find all positive integers $n$ for which there is a non-CM elliptic curve $E$ with rational $j$-invariant and a cyclic isogeny of degree $n$ defined over some number field $K$ of degree $d$.
\end{question}

\begin{remark}
    We restrict our attention only to non-CM elliptic curves. In the CM case, an elliptic curve $E$ whose endomorphism ring is an order $\mathcal O$ in some imaginary quadratic field will have isogenies of prime degrees for all prime numbers which split over that order, and the isogenies will be defined over any field containing $\mathcal O$ and the field of definition of $E$. More on the CM case can be found in \cite{Bourdon-Clark}.
\end{remark}

We define an $n$-isogeny to be a cyclic isogeny of degree $n$.

For a positive integer $d$, denote by $\Psi_\Q(d)$ the set of all positive integers $n$ for which there is a non-CM elliptic curve with rational $j$-invariant and an $n$-isogeny defined over a degree $d$ extension of $\Q$. In other words, $\Psi_\Q(d)$ is precisely the set we are trying to determine.

By Mazur \cite{mazur} and Kenku \cite{Kenku}, we know $\Psi_\Q(1)$. In other words, we know all positive integers $n$ for which there is a non-CM elliptic curve $E/\Q$ with a rational $n$-isogeny. We have $$\Psi_\Q(1)=\{1,2,\ldots, 13, 15, 16, 17, 18, 21, 25, 37\}.$$

Vukorepa proved in \cite{Borna} that $\Psi_\Q(2)=\Psi_\Q(1) \cup \{20,24,32,36\}$. Some of the proofs from that paper also apply to our problem.

The purpose of this article is to answer the question when the degree of the extension is an odd prime. Our main results are the following. 

\begin{theorem}\label{kubni}
    There exists a non-CM elliptic curve with rational $j$-invariant and an $n$-isogeny defined over a cubic extension of $\Q$ if and only if \begin{itemize}
        \item $n \in \Psi_\Q(1)$,
        \item $n=2k$ for some odd $k \in \Psi_\Q(1)$, 
        \item $n=3k$ for some $k\in \Psi_\Q(1)$ which is divisible by $3$, or
        \item $n=28$.
    \end{itemize}
\end{theorem}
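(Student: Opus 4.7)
The plan is to prove both directions of the biconditional separately.

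\emph{Forward direction.} The inclusion $\Psi_\Q(1) \subseteq \Psi_\Q(3)$ is trivial, since a $\Q$-rational $n$-isogeny is $K$-rational for any cubic $K$. For $n = 2k$ with odd $k \in \Psi_\Q(1)$, I would exhibit a non-CM $E/\Q$ with a $\Q$-rational cyclic $k$-isogeny whose 2-division polynomial is irreducible over $\Q$. The cubic splitting field $K$ of that polynomial supplies a 2-torsion point of $E$, hence a cyclic 2-subgroup defined over $K$; because $\gcd(k,2) = 1$, this combines with the kernel of the $k$-isogeny into a cyclic $2k$-subgroup, giving a $2k$-isogeny over $K$. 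For $n = 3k$ with $3 \mid k \in \Psi_\Q(1)$, I would take a non-CM $E/\Q$ with a $\Q$-rational cyclic $k$-isogeny whose kernel extends to a $G_K$-stable cyclic $3k$-subgroup over a suitable cubic $K$; equivalently one produces a cubic point on $X_0(3k)$ lying above the $\Q$-rational point of $X_0(k)$ attached to $E$'s $k$-isogeny. The sporadic $n = 28$ is realised by an explicit example, e.g.\ a suitable twist of a curve with a rational 7-isogeny whose mod-4 Galois image acquires a $G_K$-stable cyclic 4-subgroup over some cubic $K$.

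\emph{Reverse direction.} Let $E$ be non-CM with $j(E) \in \Q$ carrying a cyclic $n$-isogeny over a cubic field $K$. After a quadratic twist one may assume $E$ is defined over $\Q$. The kernel $C \subseteq E[n]$ is $G_K$-stable, so its $\Gal(\Qbar/\Q)$-orbit has size dividing $3$. If $C$ is $\Q$-rational then $n \in \Psi_\Q(1)$ by Mazur and Kenku; otherwise the orbit has size exactly $3$, forcing the projective mod-$n$ image of $E$ in $\mathrm{PGL}_2(\Z/n\Z)$ to act on $\PP^1(\Z/n\Z)$ (the set of cyclic $n$-subgroups) with a 3-element orbit. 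I would classify such $n$ in three stages. First, bound the primes $p$ dividing $n$ by analysing cubic points of $X_0(p)$ above $\Q$-rational non-CM $j$-invariants; combined with Mazur's theorem this restricts $p$ to the prime divisors of elements of $\Psi_\Q(1)$. Second, for each such prime bound the power $p^a$ that can divide $n$ using cubic points on $X_0(p^a)$. Third, for composite $n$, analyse the fiber product $X_0(p^a) \times_{X(1)} X_0(q^b) = X_0(p^a q^b)$ to determine which combinations of prime powers actually occur over cubic fields.

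The main obstacle is the third stage. Many of the needed computations parallel those of Vukorepa \cite{Borna} in the quadratic case, but one must now track 3-element rather than 2-element Galois orbits, which requires a refined analysis of the action of the mod-$n$ image on $\PP^1(\Z/n\Z)$. For each composite $n$ outside the stated list (including $20$, $24$, $32$, $33$, $39$, and $48$), one must show that $X_0(n)$ admits no cubic points above $\Q$-rational non-CM $j$-invariants. When $X_0(n)$ has positive genus this reduces to Mordell--Weil rank bounds and Chabauty-style arguments over cubic fields; otherwise one argues directly from the rational function field of $X_0(n)$. Recognising $n = 28$ as a genuine sporadic requires identifying an unexpected cubic point of $X_0(28)$ above a rational non-CM $j$-invariant and verifying that no analogous coincidence occurs outside the two systematic families.
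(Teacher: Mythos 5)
Your outline gets the skeleton right (reduce to $E/\Q$ by twisting, orbit of the kernel has size $1$ or $3$, then analyse prime powers and composites), but as written it is a programme, not a proof: every step that actually carries weight is asserted rather than established. In the forward direction, the family $n=3k$ is exactly the nontrivial point, and ``one produces a cubic point on $X_0(3k)$ lying above the $\Q$-rational point of $X_0(k)$'' is a restatement of the claim. The paper gets this from the orbit analysis of $G_K$ acting on the $\ell$ lifts of a cyclic $\ell^n$-subgroup (a subgroup of $\AGL_1(\F_\ell)$ acting on $\F_\ell$), which yields Corollary \ref{l-kl}: a $k$-isogeny with $\ell\mid k$ always propagates to an $\ell k$-isogeny over the base field or a degree-$\ell$ extension; nothing in your proposal supplies this mechanism. (Also, for $n=2k$ the splitting field of an irreducible cubic is generically sextic — you want the field generated by one root — and no irreducibility hypothesis is needed: the dichotomy ``rational root or irreducible'' already gives a $2$-isogeny over $\Q$ or a cubic field.) The sporadic $28$ and the exclusions of $20$, $52$, $108$ are likewise left as ``identify an explicit example / show no cubic points exist,'' which is precisely the hard content.

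In the reverse direction your three stages hide the essential inputs. Stage one needs the Cremona--Najman result (Proposition \ref{nadneparnimnistanovo}) that over \emph{odd-degree} fields no new odd-prime isogenies appear except $\ell=7$, $j=2268945/128$; without it, bounding the odd primes is not routine. Stage two for $\ell=2$ needs the Rouse--Zureick-Brown $2$-adic classification combined with the size bounds on UTT-type subgroups, which is how one proves that $8$- and $16$-isogenies over cubic fields are already rational, that $32$ is impossible, and that a genuinely new $4$-isogeny forces $2$-adic image $H_{20}$; for $\ell=3$ one needs the $27$-analysis of Lemma \ref{9-izo}. Stage three is where your method diverges in a way that matters: the paper never computes cubic points on $X_0(n)$ or does Chabauty over cubic fields. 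Because $j\in\Q$, the problem is converted into finding \emph{rational} points on fiber products over the $j$-line of the $X_0(p)$-parametrizations with the RZB curves $X_3$, $X_7$, $X_{20}$ — genus $1$ curves plus one genus $2$ curve handled by classical Chabauty over $\Q$ — and this computation is what simultaneously kills $4\cdot5$, $4\cdot13$, $4\cdot27$ and produces the two $j$-invariants $351/4$ and $-38575685889/16384$ certifying $28\in\Psi_\Q(3)$. Your proposed route (cubic points on $X_0(28)$, $X_0(20)$, etc., or the tautological identity $X_0(p^aq^b)=X_0(p^a)\times_{X(1)}X_0(q^b)$) does not exploit the rational-$j$ hypothesis in a usable way and there is no evidence it is feasible; as it stands the proof has a genuine gap at exactly the steps that distinguish $28$ from $20$ and $52$.
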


\begin{theorem}\label{ell>3}
    Let $p>3$ be a prime number. There exists a non-CM elliptic curve with rational $j$-invariant and an $n$-isogeny defined over a degree $p$ extension of $\Q$ if and only if $n \in \Psi_\Q(1)$ or $n=p \cdot k$ for some $k \in \Psi_\Q(1)$ which is divisible by $p$.
\end{theorem}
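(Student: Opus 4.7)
The inclusion $\Psi_\Q(1) \subseteq \Psi_\Q(p)$ is immediate by base change. For the other piece, fix $k \in \Psi_\Q(1)$ with $p \mid k$, write $k = p^a m$ with $\gcd(p,m) = 1$, and let $E/\Q$ be a non-CM elliptic curve with a rational cyclic $k$-subgroup, which splits as $C_{p^a} \oplus C_m$. The cyclic $p^{a+1}$-subgroups $\tilde{C} \subseteq E[p^{a+1}]$ satisfying $p\tilde{C} = C_{p^a}$ form a set of size $p$ on which $G_\Q$ acts; after fixing a $\Z/p^{a+1}$-basis of $E[p^{a+1}]$ adapted to $C_{p^a}$, a direct computation shows this action factors through $\AGL_1(\F_p)$. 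Any subgroup of $\AGL_1(\F_p)$ either contains a nontrivial translation (and so acts transitively on $\F_p$) or has trivial intersection with the translation subgroup (and so is conjugate into $\F_p^\times$, fixing a point); in the former case we obtain $\tilde{C}$ defined over a degree $p$ extension, in the latter a $\Q$-rational $\tilde{C}$. Either way, $\tilde{C} \oplus C_m$ is a cyclic $pk$-subgroup defined over a number field of degree $1$ or $p$, so $pk \in \Psi_\Q(p)$.

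\textbf{Plan for the ``only if'' direction --- setup.}
Let $E$ be a non-CM elliptic curve with $j(E) \in \Q$ admitting a cyclic $n$-isogeny over a degree $p$ number field $K$. Since $j(E) \in \Q$ and $\textnormal{Aut}(E) = \{\pm 1\}$ acts trivially on cyclic subgroups of $E[n]$, $E$ is a quadratic twist over $K$ of a $\Q$-model $E_0$ that admits the same cyclic $n$-isogeny over $K$; replacing $E$ by $E_0$, we assume $E/\Q$. Let $C \subseteq E[n]$ be the kernel; it is $\Gal(\overline{\Q}/K)$-stable, so its $G_\Q$-orbit has size dividing $[K:\Q] = p$, hence is $1$ or $p$. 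An orbit of size $1$ gives $C$ defined over $\Q$ and $n \in \Psi_\Q(1)$ by Mazur--Kenku, which is the first alternative.

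\textbf{Plan for the main case (orbit of size $p$).}
Assume the $G_\Q$-orbit of $C$ has size $p$. Decompose $n = p^a m$ with $\gcd(p,m) = 1$ and $C = C_p \oplus C_m$; each summand has $G_\Q$-orbit dividing $p$. The first key claim is that $C_m$ is $\Q$-rational, whence $m \in \Psi_\Q(1)$ and $C_p$ has orbit exactly $p$. Analyzing the $G_\Q$-action on $C_p[p] \in \PP^1(\F_p)$: its orbit is $1$ or $p$, and since $|\PP^1(\F_p)| = p+1$, in either subcase $\rho_{E,p}$ must have image in a Borel (the orbit-$p$ case leaves a single fixed point giving a rational line $L \neq C_p[p]$). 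So $E$ has a rational cyclic $p$-subgroup $L$. When $a = 1$, the direct sum $L \oplus C_m$ is a $\Q$-rational cyclic $pm = n$-subgroup and $n \in \Psi_\Q(1)$, placing us back in the first alternative. When $a \geq 2$, the second key claim is that $k := n/p = p^{a-1}m \in \Psi_\Q(1)$; combined with $p \mid k$ (which is exactly $a \geq 2$), this gives the second alternative.

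\textbf{The main obstacles.}
Two technical obstacles dominate. The first is the subclaim that $C_m$ is $\Q$-rational. Decomposing $m = \prod_\ell \ell^{b_\ell}$ and using Galois-equivariance of the reductions $\PP^1(\Z/\ell^{b_\ell}) \to \PP^1(\F_\ell)$, this reduces prime by prime to ruling out $G_\Q$-orbits of size $p$ on $\PP^1(\F_\ell)$ in the image of $\rho_{E,\ell}$ for each prime $\ell \ne p$ dividing $m$. By Dickson's classification of subgroups of $\GL_2(\F_\ell)$, an orbit of size $p > 3$ on $\PP^1(\F_\ell)$ forces $p \mid \ell(\ell^2 - 1)$, hence (as $\ell \neq p$) $p \mid \ell \pm 1$; eliminating each surviving configuration uses the classification of possible mod-$\ell$ images of $\rho_{E,\ell}$ for non-CM $E/\Q$ (due to Mazur, Serre, Sutherland, Zywina), the list of primes admitting rational $\ell$-isogenies, and the constraint $j(E) \in \Q$ (which rules out the exceptional projective images $A_4, S_4, A_5$). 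The second obstacle is bounding $n/p$ to lie in $\Psi_\Q(1)$: this requires a more refined analysis of the mod-$p^a$ image of $\rho_{E,p^a}$ together with explicit data on the modular curves $X_0(n)$ for each candidate $n$, ensuring that only the specific small list of new $n$'s given in the theorem admits a cyclic $n$-isogeny over a degree $p$ extension of $\Q$ with rational $j$-invariant. Both obstacles parallel the strategy Vukorepa employed for $p = 2$ in \cite{Borna} and together constitute the technical bulk of the proof.
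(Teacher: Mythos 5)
Your ``if'' direction and the initial reduction of the ``only if'' direction (quadratic twist to a $\Q$-model, the orbit of the kernel having size $1$ or $p$, the $\AGL_1(\F_p)$ action giving $pk\in\Psi_\Q(p)$ for $k\in\Psi_\Q(1)$ with $p\mid k$) are correct and coincide with the paper's Section 2 material (Lemma \ref{p-ili-dijeli-p-1}, Corollary \ref{l-kl}). But the two items you label ``obstacles'' are not side issues to be handled by citing classifications and modular-curve data: they are the entire content of the theorem, and your plan for them either stops short or would not work. For the prime-to-$p$ part, your reduction of rationality of $C_m$ to orbits on $\PP^1(\F_\ell)$ is not justified for prime powers $\ell^{b}$ with $b\ge 2$ (an orbit of size $p$ on $\PP^1(\Z/\ell^b\Z)$ can map to a fixed point of $\PP^1(\F_\ell)$), and your claim that $j(E)\in\Q$ rules out the exceptional projective images is false (projectively exceptional mod-$\ell$ images do occur for rational $j$-invariants, e.g.\ mod $13$). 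The paper instead gets rationality of the prime-to-$p$ part from Proposition \ref{nadneparnimnistanovo} (Cremona--Najman: no new odd-prime isogenies over odd-degree fields), the coprime-order argument of Lemma \ref{no2or3overell}, and, for $\ell\ge 7$, Lombardo's theorem that the $\ell$-adic image is defined modulo $\ell$ once there is an $\ell$-isogeny over $\Q$ (Lemma \ref{definiranomodl}) combined with Lemma \ref{definedmodpn}.

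The second obstacle --- showing $n/p\in\Psi_\Q(1)$, i.e.\ bounding the $p$-power part --- is where your proposal has no mechanism at all. ``Explicit data on the modular curves $X_0(n)$'' cannot settle whether, say, $X_0(p^3)$ (for $p\ge 7$) or $X_0(5^4)$ has a degree-$p$ point with rational non-CM $j$-invariant, and the analogy with Vukorepa's $p=2$ computations is misleading: there the field degree is coprime to the relevant residue characteristics in the hard cases, whereas here the degree of the field equals the residue characteristic $p$, which is exactly why new tools are needed. The paper resolves this with Lemma \ref{definiranomodl} (killing $p^3$ for $p\ge 7$ via definedness mod $p$ and Lemma \ref{definedmodpn}), and for $p=5$ with Greenberg's theorem on the index of the $5$-adic image together with the paper's own ``uppertriangular tendencies'' bounds (Proposition \ref{onasituacija}), which yield Lemma \ref{25-125} ($125$ over a quintic forces $25$ over $\Q$) and Lemma \ref{potprostih-quintic} ($5^4\notin\Psi_\Q(5)$), after which Lemma \ref{no2or3overell} excludes $4\cdot 5^b$ and $9\cdot 5^b$. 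None of this is present or replaced in your sketch, so as it stands the proposal establishes only the easy inclusion and leaves the converse unproved.
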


%\begin{remark} Denote by $I_\Q(d)$ the set of primes in $\cup_{k \leq d} \Psi_\Q(k)$. It was proved by Najman in \cite[Theorem 1.1.]{NAJMAN_2018} that $I_\Q(d)=I_\Q(1)$ for $d \leq 7$. Those prime degrees are $\{2,3,5,7,11,13,17,37\}$. Thus, the only degrees of isogenies that must be checked are products of these primes.
%\end{remark}

\begin{remark}
    Some of the proofs in this paper are aided by the computer algebra system Magma. The relevant code can be found in the following Github repository:
    \begin{center}
        \hyperlink{https://github.com/inova3c/Isogenies-over-cubic-fields-Magma-codes}{\texttt{github.com/inova3c/Isogenies-over-cubic-fields-Magma-codes}}
    \end{center}
\end{remark}

\section{Images of $\ell$-adic Galois representations}

Since the property of having an $n$-isogeny is invariant under quadratic twisting and any two non-CM elliptic curves with common $j$-invariant are quadratic twists of each other, it's not a loss of generality to only consider base changes of elliptic curves defined over $\Q$. 

For a number field $K$, let $G_K=\Gal(\overline{K}/K)$ denote the absolute Galois group of $K$, i.e. the group of all $\overline K$-automorphisms that fix $K$.

If $\phi:E \to E'$ is an $n$-isogeny defined over a number field $K$, then $\ker \phi \leq E(\overline{K})$ is a cyclic subgroup of order $n$ fixed by $G_K$. Conversely, for any cyclic subgroup of $E(\overline{K})$ of order $n$ fixed by $G_K$ there is an $n$-isogeny defined over $K$.

For a cyclic subgroup $C$ of $E(\overline{K})$, where $E/K$ is an elliptic curve, denote by $K(C)$ the smallest field extension $K'/K$ such that $G_{K'}$ fixes $C$. The field $K(C)$ is the \emph{field of definition} of $C$.

Throughout the paper, we will use the language of Galois representations of elliptic curves. For an elliptic curve $E$, we'll denote the mod $n$ Galois representation of $E$ by $\rho_{E,n}:G_\Q \to \GL_2(\Z/n\Z)$, and we'll denote the $\ell$-adic representation of $E$ by $\rho_{E, \ell^\infty}: G_\Q \to \GL_2(\Z_\ell)$. 

We will denote the subgroup of all uppertriangular matrices in $\GL_2(\Z/n\Z)$ by $B(n)$. The existence of a cyclic isogeny of degree $n$ is equivalent to the fact that the image of the mod $n$ representation is conjugate to a subgroup of $B(n)$.

The following proposition significantly simplifies our analysis. 

\begin{proposition}\cite[Proposition 3.3.]{Cremona-Najman}\label{nadneparnimnistanovo}
    Let $E/\Q$ be a non-CM elliptic curve. Let $\ell$ be an odd prime. If an elliptic curve has no $\ell$-isogenies defined over $\Q$, then it also has no $\ell$-isogenies defined over $K$ for any odd-degree number field $K$, unless $\ell=7$ and $j(E)=2268945/128$, in which case there is a $7$-isogeny defined over the cubic field $\Q(\alpha)$, where $\alpha^3-5\alpha-5=0$.
\end{proposition}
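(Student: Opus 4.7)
The plan is to reformulate via orbits on $\mathbb{P}^1(\F_\ell)$. Writing $H := \rho_{E,\ell}(G_\Q) \subseteq \GL_2(\F_\ell)$, the curve $E$ admits an $\ell$-isogeny over some number field of degree $d$ if and only if $H$ has an orbit of size dividing $d$ on the $\ell+1$ lines of $\F_\ell^2$. The hypothesis ``no $\ell$-isogeny over $\Q$'' says $H$ has no fixed line, so the proposition becomes: if $H$ has no fixed line then all $H$-orbits on $\mathbb{P}^1(\F_\ell)$ have even size, with the stated single exception.

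I would combine Dickson's classification of subgroups of $\GL_2(\F_\ell)$ with the known determination of mod-$\ell$ images of Galois for non-CM $E/\Q$. Any such $H$ lies in (i) the normalizer of a split Cartan, (ii) the normalizer of a non-split Cartan, (iii) an exceptional subgroup with projective image $A_4$, $S_4$, or $A_5$, or (iv) contains $\mathrm{SL}_2(\F_\ell)$. I would then compute the permutation action on $\mathbb{P}^1(\F_\ell)$ in each case: in (iv) and (ii) the full Cartan is transitive on the $\ell+1$ lines, so any $H$ without a fixed line has equal-length orbits, and a parity count on $|H|$ modulo scalars forces those lengths to be even; in (i) the two coordinate axes form an orbit of size $2$, and the remaining $\ell-1$ lines decompose under $H$ into orbits which I would track by parameterising the Cartan part of $H$ as multiplication by a subgroup $U \subseteq \F_\ell^\times$ and the nontrivial coset as $t \mapsto v/t$ for $v$ in a coset $V$, giving orbit lengths $|U|$ or $2|U|$; in (iii) the candidate subgroups are small and can be analysed directly via the character of the permutation representation.

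This produces a short list of candidate subgroups having an odd orbit of size $\geq 3$, and the next step is to intersect this list with the images actually realised for non-CM $E/\Q$ (tabulated for example by Sutherland and Zywina). The assertion is that the unique survivor is an exceptional image of $\rho_{E,7}$ realised by a single non-CM rational $j$-invariant $j(E) = 2268945/128$, and that its projective action on the $8$ points of $\mathbb{P}^1(\F_7)$ has orbit lengths $\{3,5\}$. The size-$3$ orbit then furnishes a $7$-isogeny over a cubic subfield, which one identifies with $\Q[\alpha]/(\alpha^3 - 5\alpha - 5)$ by taking an explicit model of $E$, factoring its $7$-division polynomial over $\Q$, and reading off the cubic field generated by the relevant factor.

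The main obstacle is the Cartan-normalizer case analysis: ruling out odd-orbit subgroups for every prime $\ell$ except the one exception requires the image-classification input, since for large $\ell$ such subgroups correspond to rational points on $X_{\mathrm{split}}(\ell)$ or $X_{\mathrm{ns}}^+(\ell)$, and the hard part is knowing that no exceptional such points exist. For the small primes that remain, it is a finite but careful computation; the rest of the argument is bookkeeping.
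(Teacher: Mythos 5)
First, note that the paper does not prove this proposition at all: it is imported verbatim from Cremona--Najman, so there is no internal argument to compare against. Your overall strategy --- translate ``$\ell$-isogeny over an odd-degree field'' into ``$H=\rho_{E,\ell}(G_\Q)$ has an odd orbit on $\PP^1(\F_\ell)$'' and then run through Dickson's classification together with knowledge of which images actually occur --- is indeed the standard route to this kind of statement. But as written the proposal has two genuine problems.

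The serious one is your dispatch of the non-split Cartan normalizer. It is not true that a subgroup $H$ of the normalizer without a fixed line has all orbits of equal length, nor does any parity count on $|H|$ force even orbits. The Cartan itself acts (projectively) regularly on the $\ell+1$ lines, so $H\cap C$ has all orbits of some common size $d\mid \ell+1$, but $H$-orbits can have size $d$ or $2d$, and $d$ can be odd: for example, mod $11$ take $H\cap C$ to be the full preimage of the order-$3$ subgroup of the projective Cartan and adjoin a suitable outer element; one checks this $H$ has surjective determinant, contains an element with the characteristic polynomial of complex conjugation (trace $0$, determinant $-1$, order $2$), fixes no line, and has two orbits of size $3$ on $\PP^1(\F_{11})$. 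So group theory plus the formal arithmetic constraints do not exclude odd orbits here; excluding them means showing such subgroups do not occur as mod-$\ell$ images of non-CM curves over $\Q$, i.e.\ controlling rational points on covers of $X_{\mathrm{ns}}^{+}(\ell)$. That is exactly the range where the classification you invoke is not unconditional (the Sutherland--Zywina tables are complete only conjecturally there, and Serre's uniformity problem for the non-split normalizer is open), so ``intersect with the images actually realised'' does not close the case without further input; this is the crux of the proof, not bookkeeping. By contrast, your cases (iii) and (iv) are fine: exceptional projective images have cyclic point stabilizers and hence only even orbits, and a transitive group has the single even orbit of size $\ell+1$.

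Second, the concrete data you assert for the exceptional case is impossible: $|\GL_2(\F_7)|=2016$ is not divisible by $5$, so no subgroup has an orbit of size $5$ on the $8$ points of $\PP^1(\F_7)$. The image attached to $j=2268945/128$ lies in the normalizer of a split Cartan (Sutherland's label 7Ns.2.1), and its orbits have sizes $2,3,3$: the two Cartan-fixed lines are swapped, and the remaining six lines fall into two cubic orbits, one of which yields the stated field $\Q(\alpha)$ with $\alpha^3-5\alpha-5=0$. This slip does not affect the architecture, but it shows the final verification step of your plan was not actually carried out.
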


This means that nothing new happens over odd degree extensions in terms of odd prime degrees of isogenies, except in one special case. We now turn to prime power degrees of isogenies.

\subsection{Growth of cyclic isogeny degree for prime powers} \phantom{a} \\
\noindent Suppose that $E$ is an elliptic curve over a number field $K$. Let $\ell$ be a prime number and let $C$ be a cyclic subgroup of $E(\overline{K})$ of order $\ell^{n}$. The quantity of our interest is the index $[K(C):K(\ell C)]$. 

\begin{definition}
We say that the $\ell$-adic representation $\rho_{E, \ell^\infty}$ of $E$ is defined modulo $\ell^n$ if the image $\rho_{E, \ell^\infty}(G_\Q)$ contains all matrices congruent to $I$ mod $\ell^n$.
\end{definition}

\begin{lemma}{\cite[Proposition 3.7.]{Cremona-Najman}}\label{definedmodpn}
    Let $E$ be an elliptic curve defined over a number field $K$ such that its $\ell$-adic representation is defined modulo $\ell^{n-1}$ for some $n \geq 1$. Then for any cyclic subgroup $C$ of $E(\overline K)$ of order $\ell^n$, we have $[K(C):K(\ell C)]=\ell$.
\end{lemma}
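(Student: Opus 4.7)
\noindent\emph{Proof plan.} The plan is to convert the degree $[K(C):K(\ell C)]$ into the size of a Galois orbit on a small auxiliary set, and then use the hypothesis on the image of $\rho_{E,\ell^\infty}$ to show that this orbit is the whole set.

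Let $S$ be the set of cyclic subgroups $C'\subseteq E[\ell^n]$ of order $\ell^n$ with $\ell C'=\ell C$. Fix a generator $Q$ of $C$ and complete it to a basis $(Q,Q')$ of $E[\ell^n]\cong(\Z/\ell^n)^2$, so that $\ell C=\langle \ell Q\rangle$. A short computation in $(\Z/\ell^n)^2$, counting generators $(x,y)$ of cyclic subgroups of order $\ell^n$ with $\langle \ell(x,y)\rangle=\langle (\ell,0)\rangle$ and dividing by $\phi(\ell^n)$, gives
\[
S=\{\langle Q+\ell^{n-1}aQ'\rangle:a\in\F_\ell\},\qquad |S|=\ell.
\]
By definition the subgroup $G_{K(\ell C)}\subseteq G_K$ acts on $S$, with the stabiliser of $C$ equal to $G_{K(C)}$; the Galois correspondence then identifies $[K(C):K(\ell C)]$ with the size of the orbit of $C$ under this action. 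In particular, the degree is at most $\ell$.

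For the lower bound I would invoke the hypothesis. By assumption, every matrix of the form $A=I+\ell^{n-1}M$ with $M\in M_2(\F_\ell)$ lies in $\rho_{E,\ell^n}(G_K)$. Any such $A$ acts trivially on $E[\ell^{n-1}]$, so in particular fixes $\ell C$ pointwise; consequently any preimage $\sigma\in G_K$ already lies in $G_{K(\ell C)}$. Taking $M=\sm{0&0\\ a&0}$, the matrix $A=\sm{1&0\\ \ell^{n-1}a&1}$ sends $Q\mapsto Q+\ell^{n-1}aQ'$, and hence moves $C$ to the element of $S$ indexed by $a$. As $a$ ranges over $\F_\ell$, the orbit of $C$ picks up all $\ell$ elements of $S$, and therefore $[K(C):K(\ell C)]=\ell$.

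\noindent\emph{Main obstacle.} The argument is essentially linear algebra on $(\Z/\ell^n)^2$ combined with the Galois correspondence; nothing is genuinely hard. The only point requiring care is to verify that the Galois elements realising the matrices $I+\ell^{n-1}M$ actually lie inside $G_{K(\ell C)}$ (not merely inside $G_K$), which follows from the observation that such matrices fix $\ell C$ pointwise. One should also note that the statement is really of interest for $n\geq 2$: for $n=1$ one has $\ell C=0$, $|S|=\ell+1$, and the hypothesis forces full image, giving degree $\ell+1$ rather than $\ell$.
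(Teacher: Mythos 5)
Your proof is correct, and it is essentially the argument behind the cited result: the paper itself imports this lemma from Cremona--Najman without proof, and the machinery it develops immediately afterwards (the $\ell$ subgroups $D$ with $\ell D=C$ and the Galois action on them, transferred to an affine action on $\F_\ell$) is exactly the orbit computation you carry out, with the congruence subgroup $I+\ell^{n-1}M$, $M\in M_2(\Z/\ell^n\Z)$, forcing transitivity on the $\ell$ lifts while fixing $\ell C$ pointwise. Your closing caveat about $n=1$ is also well taken: with the paper's definition, ``defined modulo $\ell^{0}$'' means surjectivity and the degree is then $\ell+1$, so the statement as transcribed should really require $n\ge 2$.
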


Thus, if the $\ell$-adic representation is defined modulo $\ell^{n-1}$ and we're given an $\ell^{n-1}$-isogeny, we need to take a field extension of degree $\ell$ to get an $\ell^n$-isogeny.

In general, let $C$ be a cyclic subgroup of $E(\overline{K})$ of order $\ell^n$. 

\begin{lemma}
    There are exactly $\ell$ cyclic subgroups $D$ of $E(\overline K)$ such that $\ell D=C$.
\end{lemma}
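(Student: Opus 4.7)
The plan is to perform a direct counting argument inside $E[\ell^{n+1}] \cong (\Z/\ell^{n+1}\Z)^2$. First I would identify which subgroups $D$ we are actually counting. If $D$ is cyclic with $\ell D = C$, writing $|D| = \ell^k m'$ with $\gcd(m',\ell)=1$ forces (via $|\ell D| = \ell^{k-1}m' = \ell^n$) that $m' = 1$ and $k = n+1$, so $|D| = \ell^{n+1}$ and $D \subseteq E[\ell^{n+1}]$. Since $\ell D \subseteq D$, we also get $C \subseteq D$. Conversely, if $D$ is cyclic of order $\ell^{n+1}$ and $C \subseteq D$, then $\ell D$ is the unique cyclic subgroup of $D$ of order $\ell^n$, and hence equals $C$. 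So the problem reduces to counting cyclic subgroups of $E[\ell^{n+1}]$ of order $\ell^{n+1}$ containing $C$.

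Next I would perform the count by enumerating generators. The group $C$ has $\phi(\ell^n) = \ell^{n-1}(\ell-1)$ generators. The multiplication-by-$\ell$ map $[\ell]\colon E[\ell^{n+1}] \to E[\ell^n]$ is surjective with kernel $E[\ell]$ of order $\ell^2$, so every generator $P'$ of $C$ has exactly $\ell^2$ preimages $Q \in E[\ell^{n+1}]$, each of exact order $\ell^{n+1}$ (since $\ell Q = P'$ has order $\ell^n$). This produces $\ell^{n-1}(\ell-1)\cdot \ell^2 = \ell^{n+1}(\ell-1)$ elements $Q$, each generating a cyclic subgroup of order $\ell^{n+1}$ containing $C$.

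Finally, to pass from generators back to subgroups, I would use that each cyclic $D$ of order $\ell^{n+1}$ has $\phi(\ell^{n+1}) = \ell^n(\ell-1)$ generators, every one of which $\ell$-multiplies to a generator of $\ell D = C$ and hence appears in the count above exactly once. Dividing yields $\ell^{n+1}(\ell-1)/(\ell^n(\ell-1)) = \ell$ distinct subgroups $D$, as desired. I do not anticipate any real obstacle: the argument is routine bookkeeping in a rank-two free module over $\Z/\ell^{n+1}\Z$. The only point requiring a bit of care is the equivalence established in the first step, which guarantees that we neither overcount (by admitting non-cyclic candidates) nor undercount (by missing some cyclic $D$ with the correct $\ell D$). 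A cleaner but essentially equivalent alternative would be to pick a basis $e_1,e_2$ of $E[\ell^{n+1}]$ with $\ell e_1$ a generator of $C$, parametrize $D = \langle x e_1 + y e_2\rangle$, translate $C \subseteq D$ into the conditions that $x$ is a unit and $\ell^n \mid y$, and then observe that up to scaling the generator by a unit, $D$ is determined by $y/\ell^n \pmod{\ell}$, giving $\ell$ choices.
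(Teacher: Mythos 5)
Your proof is correct and is essentially the same elementary counting argument in $E[\ell^{n+1}]$ as the paper's: both reduce to observing that any such $D$ is cyclic of order $\ell^{n+1}$ containing $C$ and then double counting. The only difference is bookkeeping — the paper fixes one generator $P$ of $C$ and shows each candidate $D$ contains exactly $\ell$ of the $\ell^2$ points $Q$ with $\ell Q=P$, while you count all $\ell^{n+1}(\ell-1)$ generators of the candidate subgroups and divide by $\phi(\ell^{n+1})=\ell^{n}(\ell-1)$.
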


\begin{proof}
    Let $P$ be a generator of $C$. There are $\ell^2$ points $Q\in E(\overline{K})$ with $\ell Q=P$. If $Q_0$ is one of the points, the set of all such points is $\{Q_0+T \mid T \in E[\ell]\}$.

    We claim that there are exactly $\ell$ points $T$ satisfying $Q_0+T \in \langle Q_0 \rangle$. Indeed, look at the equation $aQ_0=Q_0+T$ for some $a \in \Z/\ell^{n+1}\Z$. Since $T$ has order dividing $\ell$ and $Q_0$ has order $\ell^{n+1}$, this equation has a solution $T$ if and only if $\ell^n \mid a-1$, so there are exactly $\ell$ choices for $a$.

    From here, it follows that there are exactly $\ell$ cyclic subgroups $D$ satisfying $\ell D=C$,  each of them containing exactly $\ell$ solutions $Q$ to the equation $\ell Q=P$.
\end{proof}

Let $\mathcal S=\{D_0, D_1, \ldots, D_{\ell-1}\}$ be the set of subgroups satisfying $\ell D=C$. The absolute Galois group $G_K$ acts on $\mathcal S$. 

If the action is transitive, then the stabilizer of any $D_i$ has index $\ell$ in $G_K$. The corresponding fixed field has degree $\ell$ over $K$, and $D_i$ is defined over this field. This gives us an $\ell^{n+1}$-isogeny over a degree $\ell$ extension of $K$. 

If $G_K$ fixes any given group $D_i$, then $D_i$ is defined over $K$ and we have an $\ell^{n+1}$-isogeny defined over $K$. 

Let $Q$ be a point satisfying $\ell Q=P$. Let $S$ be a point such that $\{Q,S\}$ is a basis for $E[\ell^{n+1}]$. Then the $\ell$ subgroups can be represented as $$D_0=\langle Q \rangle, D_1=\langle Q+\ell^nS\rangle, \ldots, D_{\ell-1}=\langle Q+(\ell-1)\ell^n S \rangle.$$

Now take $\sigma \in G_K$. Since $\ell Q=P$ and $\sigma(P) \in \langle P \rangle$,   $\rho_{E,\ell^{n+1}}(\sigma)$ is of the form $\sm{a & b \\ \ell^n c & d}$. If we calculate the action of $\sigma$ on $Q+t\cdot \ell^nS$, we get  $$\sigma(Q+t\ell^n S)=(a+t\cdot b\ell^n)Q+(c\ell^n+td\ell^n)S.$$

Thus, $\sigma(Q+t\cdot \ell^nS)\in \langle Q+ \frac{c+td}{a}\ell^n S\rangle$. In other words, $$\sigma(D_t)=D_{\frac{c+td}{a}}.$$ 

Denote by $\AGL_1(\F_\ell)$ the group of all linear transformations on $\F_\ell$, i.e. all maps of the form $x \mapsto ax+b$ for $a \in \F_\ell^*$, $b \in \F_\ell$. Then the action of $G_K$ on $\mathcal S$ is equivalent to the action of a subgroup of $\AGL_1(\F_\ell)$ on $\F_\ell$.

We have the following elementary result, taken from \cite[Lemma 3.5.]{Cremona-Najman}.

\begin{lemma}
    For every subgroup $H \leq \AGL_1(\F_\ell)$ with $H\neq 1$, one of two cases occurs: either $H$ has order divisible by $\ell$, acts transitively on $\F_\ell$ and has a subgroup of index $\ell$; or is cyclic of order dividing $\ell-1$, acts on $\F_\ell$ with orbits of size $\ell-1$ with exactly one fixed point.
\end{lemma}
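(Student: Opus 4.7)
The plan is to decompose the analysis by whether $\ell$ divides $|H|$ or not, exploiting the semidirect product structure $\AGL_1(\F_\ell) = T \rtimes \F_\ell^*$ where $T = \{x \mapsto x+b : b \in \F_\ell\}$ is the translation subgroup. The ambient group has order $\ell(\ell-1)$, and since $T$ has prime order $\ell$, it is the unique Sylow $\ell$-subgroup and is normal, with quotient $\AGL_1(\F_\ell)/T$ cyclic of order $\ell-1$.

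First I would handle the case $\ell \mid |H|$. Then $H$ contains an element of order $\ell$, which must lie in the Sylow $\ell$-subgroup $T$; since $|T| = \ell$ is prime, this forces $T \leq H$. Because $T$ already acts simply transitively on $\F_\ell$, the larger group $H$ does as well. By orbit-stabilizer, the stabilizer of any point is a subgroup of $H$ of index $\ell$, giving the desired claim in this case.

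For the remaining case $\ell \nmid |H|$, Lagrange forces $H \cap T = 1$, so $H$ injects into the cyclic quotient $\AGL_1(\F_\ell)/T \cong \F_\ell^*$. Thus $H$ is cyclic of order dividing $\ell-1$. If $g(x) = ax+b$ generates $H$, then $a \neq 1$ (otherwise $g \in T$), and the equation $ax+b=x$ has the unique solution $x_0 = b/(1-a) \in \F_\ell$, which is therefore fixed by every power of $g$, i.e. by all of $H$. For any other point $y \neq x_0$, I would show no nontrivial element of $H$ fixes it: writing $g^k(x) = a^k x + b\tfrac{a^k-1}{a-1}$ and noting $a^k \neq 1$ for $0 < k < |H|$ (since $a$ has order $|H|$ in $\F_\ell^*$), the same computation shows $g^k$ has unique fixed point $x_0$. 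Hence $H$ acts freely on $\F_\ell \setminus \{x_0\}$, so $\F_\ell$ decomposes into the singleton $\{x_0\}$ together with orbits of size $|H|$ covering the remaining $\ell-1$ points.

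The proof is essentially elementary group theory; the only point requiring mild care is the verification that $x_0$ is the only fixed point of $H$, which reduces to the observation that $a^k - 1 \neq 0$ in $\F_\ell$ for $k$ not a multiple of the order of $a$.
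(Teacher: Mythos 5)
Your proof is correct and follows essentially the same route as the paper: the same dichotomy (the condition $\ell \mid |H|$ is equivalent to the paper's ``$H$ contains a nontrivial translation''), with transitivity plus orbit–stabilizer in the first case and an embedding of $H$ into $\F_\ell^*$ plus the unique solution of $ax+b=x$ in the second, your only cosmetic difference being the use of Cauchy/Sylow and the quotient $\AGL_1(\F_\ell)/T\cong\F_\ell^*$ where the paper argues directly with the matrices $\sm{a & b \\ 0 & 1}$. Your extra step checking that every nontrivial power $g^k$ has the same unique fixed point (hence the action off $x_0$ is free, with orbits of size $|H|$ dividing $\ell-1$) is a slight sharpening the paper leaves implicit.
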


\begin{proof}
    We can represent the element $\{x \mapsto ax+b\}$ as a matrix $\sm{ a & b \\ 0 & 1}$. If the subgroup $H$ contains an element of the form $\sm{1 & b \\ 0 & 1}$ with $b \neq 0$, then $H$ acts transitively and the mentioned element has order $\ell$. The stabilizer of any $x \in \F_\ell$ is a subgroup of index $\ell$ . 

    If $H$ doesn't contain such elements, then for any $a$, there can be at most one $b$ with $\sm{a & b \\ 0 & 1}\in H$. This means that there is an embedding from $H$ to $\F_\ell^*$ given by $\sm{ a & b \\ 0 & 1} \mapsto a$. Since $\F_\ell^*$ is cyclic of order $p-1$, the claim follows. 

    Now suppose that $\sm{a & b \\ 0 & 1}$ is a generator of $H$. Then $x$ is a fixed point if and only if $x=ax+b$. There is a unique solution to this equation since $a \neq 1$, so there is a unique fixed point.
\end{proof}

Applying this to our situation yields the following result. This is a slightly restated version of \cite[Proposition 3.6.]{Cremona-Najman}.

\begin{lemma}\label{p-ili-dijeli-p-1}
    Let $\ell$ be a prime, $E$ an elliptic curve over a number field $K$ and $C$ a cyclic subgroup of $E(\overline K)$ of order $\ell^n$. Denote by $\mathcal S$ the set of all cyclic subgroups $D \subseteq E(\overline K)$ satisfying $\ell D=C$. Then:
    \begin{enumerate}[label=\textnormal{(\roman*)}]
        \item For all $D \in \mathcal S$, the index $[K(D):K(C)]$ either equals $\ell$ or divides $\ell-1$.
        \item If $[K(D):K(C)]=\ell$ for some $D$, then $[K(D):K(C)]=\ell$ for all $D$. Otherwise, there exists a subgroup $D \in \mathcal S$ with $K(D)=K(C)$. 
    \end{enumerate}
\end{lemma}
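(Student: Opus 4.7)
The plan is to invoke the explicit description of the $G_K$-action on $\mathcal S$ that was just computed in the paragraphs preceding the lemma, and then apply the elementary classification of subgroups of $\AGL_1(\F_\ell)$.

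First I would replace $K$ by $K(C)$ throughout. This is harmless because the quantity $[K(D):K(C)]$ and the fields of definition $K(D)$ depend only on the Galois orbit of $D$ under $G_{K(C)}$, not on the larger group $G_K$; and $C$ is by definition fixed by $G_{K(C)}$, so the computation carried out above applies. In particular, the image of the action of $G_{K(C)}$ on $\mathcal S$ is a subgroup $H \leq \AGL_1(\F_\ell)$, via the identification $\mathcal S \leftrightarrow \F_\ell$ given by $D_t \leftrightarrow t$.

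Next I would feed $H$ into the preceding lemma. Three cases arise. If $H$ is trivial, then every $D \in \mathcal S$ is already fixed by $G_{K(C)}$, hence $K(D)=K(C)$ for all $D$; this gives the second alternative in (ii) and the bound $1 \mid \ell-1$ for (i). If $\ell \mid |H|$, then by the previous lemma $H$ acts transitively and the stabilizer of each $D \in \mathcal S$ has index $\ell$ in $H$; by Galois theory the fixed field of this stabilizer is an extension of $K(C)$ of degree $\ell$, so $[K(D):K(C)]=\ell$ for every $D$, which is the first alternative in (ii) and yields the value $\ell$ in (i). Finally, if $H$ is nontrivial with $\ell\nmid|H|$, then the previous lemma gives that $H$ is cyclic of order dividing $\ell-1$, with one fixed point $D^* \in \mathcal S$ and all other orbits of size $|H|$. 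The fixed point produces a $D^* \in \mathcal S$ with $K(D^*)=K(C)$ (the second alternative of (ii)), while for every other $D$ the stabilizer has index dividing $|H|$, hence dividing $\ell-1$, giving the second possibility in (i).

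Combining the three cases produces exactly the statements in (i) and (ii): the index $[K(D):K(C)]$ is either $\ell$ (case two) or a divisor of $\ell-1$ (cases one and three); and $[K(D):K(C)]=\ell$ for some (equivalently, every) $D$ precisely when we are in case two, while in the remaining cases the fixed point guaranteed by the classification supplies a $D$ with $K(D)=K(C)$.

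I do not expect any real obstacle here, since all the heavy lifting — computing the action on $\mathcal S$ in matrix form and classifying subgroups of $\AGL_1(\F_\ell)$ — has been done in the preceding lemmas. The only point that needs a little care is the translation between orbit size under $H$ and the degree $[K(D):K(C)]$, which is a direct application of the orbit-stabilizer theorem together with the Galois correspondence for the (possibly infinite but profinite) extension $\overline K/K(C)$.
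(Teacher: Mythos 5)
Your proposal is correct and follows exactly the route the paper intends: it combines the explicit computation of the $G_{K(C)}$-action on $\mathcal S$ as a subgroup of $\AGL_1(\F_\ell)$ acting on $\F_\ell$ with the preceding classification lemma, translating orbit sizes into the degrees $[K(D):K(C)]$ via orbit--stabilizer and the Galois correspondence. The paper gives no separate written proof (it is presented as an application of the preceding discussion, following Cremona--Najman), so your write-up simply makes that same argument explicit.
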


\begin{corollary}\label{l-kl}
    Let $\ell$ be a prime number and let $k$ be a positive integer divisible by $\ell$. Suppose that $E$ is an elliptic curve over a number field $K$ with a $k$-isogeny over $K$. Then $E$ has an $\ell \cdot k$-isogeny over $K$ or over a degree $\ell$ extension of $K$.
\end{corollary}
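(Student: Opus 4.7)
The plan is to reduce to the $\ell$-primary part of the isogeny kernel and then apply the analysis preceding Lemma~\ref{p-ili-dijeli-p-1}. Let $C \subseteq E(\overline{K})$ denote the $G_K$-stable cyclic kernel of the given $k$-isogeny. First I would write $k = \ell^a m$ with $a \geq 1$ and $\gcd(\ell,m) = 1$, and decompose $C = C_\ell \oplus C_m$ into its $\ell$-primary and prime-to-$\ell$ parts, of orders $\ell^a$ and $m$ respectively. Both summands are $G_K$-stable because the primary decomposition is functorial, so the problem reduces to enlarging $C_\ell$ to a cyclic subgroup of order $\ell^{a+1}$ over an extension of $K$ of degree $1$ or $\ell$.

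Next I would apply the analysis preceding Lemma~\ref{p-ili-dijeli-p-1} to $C_\ell$: the set $\mathcal{S}$ of cyclic subgroups $D \subseteq E(\overline{K})$ with $\ell D = C_\ell$ has exactly $\ell$ elements, and the $G_K$-action on $\mathcal{S}$ factors through a subgroup $H \leq \AGL_1(\F_\ell)$ acting on $\F_\ell$. By the structure lemma for subgroups of $\AGL_1(\F_\ell)$ stated just before Lemma~\ref{p-ili-dijeli-p-1}, one of two things happens: either $H$ has a fixed point on $\mathcal{S}$, yielding some $D$ defined over $K$; or $H$ acts transitively on $\mathcal{S}$, in which case the $G_K$-stabilizer of any chosen $D \in \mathcal{S}$ has index $\ell$, and the corresponding fixed field $K'$ is a degree-$\ell$ extension of $K$ over which $D$ is defined.

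To conclude, I would form $C' = D \oplus C_m$. Since $\ell^{a+1}$ and $m$ are coprime and each summand is cyclic, $C'$ is cyclic of order $\ell^{a+1} m = \ell k$, and it is defined over $K$ or $K'$ because both summands are. The only minor subtlety is verifying that the derivation preceding Lemma~\ref{p-ili-dijeli-p-1}, though phrased as if $C$ were the full isogeny kernel, applies verbatim to $C_\ell \subseteq E[\ell^{a+1}]$; this is immediate because that derivation is entirely local to the $\ell^{a+1}$-torsion and does not use the prime-to-$\ell$ part of the original kernel.
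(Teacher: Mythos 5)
Your proposal is correct and follows essentially the same route as the paper: reduce to the $\ell$-primary part of the kernel and invoke the lifting analysis of Lemma~\ref{p-ili-dijeli-p-1} (which is exactly the content of the $\AGL_1(\F_\ell)$ discussion you cite) to enlarge it over $K$ or a degree-$\ell$ extension. The only difference is that you spell out the reduction ``it suffices to treat $k$ a power of $\ell$'' via the decomposition $C = C_\ell \oplus C_m$ and the recombination $D \oplus C_m$, which the paper leaves implicit.
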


\begin{proof}
    It suffices to prove this when $k$ is a power of $\ell$. Let $C$ be a subgroup of order $\ell^n$ defined over $K$. Then, by Lemma \ref{p-ili-dijeli-p-1}, there exists a cyclic subgroup $D$ of order $\ell^{n+1}$ such that $[K(D):K]=\ell$ or $K(D)=K$. 

    The isogeny whose kernel is $D$ has the desired property.
\end{proof}

\subsection{Subgroups with uppertriangular tendencies}

\begin{remark}
    For a positive integer $m$, let $G_m$ denote the subgroup of $\GL_2(\Z_\ell)$ of all matrices of the form $\m{ a & b \\ \ell^mc & d}$, i.e. all matrices which are uppertriangular modulo $\ell^m$. From the proof of \cite[Proposition 3.6.]{Cremona-Najman} it follows that we can pick a basis for the $\ell$-adic Tate module of $E$ in which we have $$[K(C):K(\ell C)]=[\rho(G_{K(\ell C)}):\rho(G_{K(C)})]=[\rho(G_K)\cap G_{n-1}:\rho(G_K) \cap G_n],$$ where $\rho=\rho_{E, \ell^\infty}$ is the $\ell$-adic representation of $E$. 
\end{remark}

From Lemma \ref{p-ili-dijeli-p-1} and the above remark, we obtain the following corollary.

\begin{corollary}\label{slucaj-jednakosti}
     Let $\ell$ be a prime and let $m$ and $n$ be positive integers with $n\geq 2$ and $m<n$. Let $E$ be an elliptic curve over a number field $K$ and $C$ a cyclic subgroup of $E(\overline K)$ of order $\ell^n$. Then $K(C)=K(\ell^mC)$ if and only if there exists a basis for the $\ell$-adic Tate module of $E$ such that the following condition is satisfied: \begin{center} for any matrix $\sabcd \in \rho_{E, \ell^\infty}(G_K)$, if $c \equiv 0 \pmod{\ell^{n-m}}$ then $c \equiv 0 \pmod{\ell^n}$. \end{center}
\end{corollary}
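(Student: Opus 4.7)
The plan is to translate the field equality $K(C)=K(\ell^m C)$ into the stated matrix condition by picking a basis of the $\ell$-adic Tate module adapted to $C$. This is essentially the mechanism already recorded in the remark preceding the corollary and used throughout Lemma~\ref{p-ili-dijeli-p-1}: the $(2,1)$-entry of $\rho_{E,\ell^\infty}(\sigma)$ in a suitable basis measures precisely how far $\sigma$ moves the first basis vector off the first coordinate line.

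First I would fix a $\Z_\ell$-basis $\{P_1,P_2\}$ of the Tate module $T_\ell(E)$ such that $P_1 \bmod \ell^n$ generates $C$; this exists by lifting any basis of $E[\ell^n]$ whose first vector generates $C$. In this basis, for $\sigma \in G_K$ with $\rho_{E,\ell^\infty}(\sigma)=\sabcd$, a direct computation gives $\sigma(\ell^j P_1) \equiv a\ell^j P_1 + c\ell^j P_2 \pmod{\ell^n}$, so $\sigma$ stabilizes the cyclic subgroup $\ell^j C = \langle \ell^j P_1\rangle$ (of order $\ell^{n-j}$) exactly when $c\ell^j \equiv 0 \pmod{\ell^n}$, i.e.\ $c \equiv 0 \pmod{\ell^{n-j}}$. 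Specializing to $j=0$ and $j=m$ identifies $G_{K(C)}$ and $G_{K(\ell^m C)}$ as, respectively, the sets of $\sigma \in G_K$ with $c(\sigma) \equiv 0 \pmod{\ell^n}$ and $c(\sigma) \equiv 0 \pmod{\ell^{n-m}}$.

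Since $\ell^n \mid c \Rightarrow \ell^{n-m} \mid c$, the inclusion $G_{K(C)} \subseteq G_{K(\ell^m C)}$ is automatic, so by Galois theory $K(C)=K(\ell^m C)$ is equivalent to the reverse inclusion. Rephrased on the image side, this reverse inclusion reads: every matrix $\sabcd \in \rho_{E,\ell^\infty}(G_K)$ with $c \equiv 0 \pmod{\ell^{n-m}}$ also has $c \equiv 0 \pmod{\ell^n}$, which is exactly the stated condition in the basis just chosen. Both directions of the iff therefore fall out of this single equivalence.

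The main (mild) obstacle is that the $(2,1)$-entry condition is manifestly basis-dependent, so one has to be careful that the basis witnessing the condition is the one adapted to the given $C$; since the statement only asks for the existence of some basis, the adapted basis always provides the witness in the forward direction, and running the same equivalence in reverse in that basis gives the backward direction. Beyond this, the only thing to pin down carefully is the translation "$\sigma$ stabilizes $\ell^j C$" $\iff$ "$c \equiv 0 \pmod{\ell^{n-j}}$", after which the proof is routine.
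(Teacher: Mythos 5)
Your computation in the basis adapted to $C$ (first Tate-module basis vector reducing to a generator of $C$) is correct and is essentially the paper's own route: the paper deduces the corollary from Lemma~\ref{p-ili-dijeli-p-1} together with the remark immediately before it, which quotes the proof of Proposition 3.6 of Cremona--Najman for exactly the identification you rederive, namely that in such a basis the stabilizers of $C$ and of $\ell^m C$ inside $G_K$ are cut out by the conditions $\ell^n \mid c$ and $\ell^{n-m}\mid c$ on the bottom-left entry, so the field equality is the statement that the second condition forces the first on $\rho_{E,\ell^\infty}(G_K)$.

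The one place you are too quick is the reverse implication. The hypothesis there is only that \emph{some} basis satisfies the displayed condition, and your phrase ``running the same equivalence in reverse in that basis'' silently replaces that unspecified basis by the adapted one. With a basis unrelated to the given $C$ the implication genuinely fails: if, for instance, $\rho_{E,\ell^\infty}(G_K)$ is contained in the matrices that are uppertriangular modulo $\ell^n$ in some basis (a curve with a rational cyclic $\ell^n$-isogeny), the condition holds vacuously in that basis, yet if the given $C$ is generated by the second basis vector one has $K(C)\neq K(\ell^m C)$ in general. So the ``if'' direction is only valid when the witnessing basis is the one adapted to $C$; this is evidently the intended reading (it is the basis produced in the preceding remark), and it is in any case the only direction the paper uses later (Lemmas~\ref{9-izo} and~\ref{25-125} invoke only the ``only if'' direction). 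To make your write-up airtight, prove the equivalence for the adapted basis and note explicitly that ``there exists a basis'' is witnessed by, and should be read as referring to, that basis; apart from this point your argument is complete and matches the paper's.
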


This motivates the following definition.

\begin{definition}
     Let $\ell$ be a prime number and let $0 \leq m<n$  be integers. We say a group $H\leq \GL_2(\Z/\ell^n\Z)$ has \emph{$m$-uppertriangular tendencies} if the following condition is satisfied: \begin{center}
        for any matrix $\sabcd \in H$, if $c \equiv 0 \pmod{\ell^m}$ then $c \equiv 0 \pmod{\ell^n}$.
    \end{center}
\end{definition}
\begin{remark}
    To abbreviate, we'll call subgroups with $m$-uppertriangular tendencies "$m$-UTT subgroups" or just "$m$-UTT", and we'll call the defining property "$m$-UTT property". Also, we'll say "UTT subgroups" instead of "$1$-UTT subgroups". Note that "$0$-UTT" just means that every matrix in the subgroup is uppertriangular.
\end{remark}

We can rewrite Corollary \ref{slucaj-jednakosti} in new terms as follows.

\begin{corollary} \label{rewritten-as-utt}
    Let $\ell$ be a prime and let $m<n$ be positive integers with $n\geq 2$. Let $E$ be an elliptic curve over a number field $K$ and $C$ a cyclic subgroup of $E(\overline K)$ of order $\ell^n$. 
    
    Then $[K(C):K(\ell^m C)]=1$ if and only if there exists a basis for the $\ell$-adic Tate module of $E$ such that the image of the mod $\ell^n$ representation of $E$ is an $(n-m)$-UTT subgroup of $\GL_2(\Z/\ell^n\Z)$.
\end{corollary}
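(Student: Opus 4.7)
The plan is to deduce this directly from Corollary \ref{slucaj-jednakosti}, which is already phrased in essentially equivalent terms using the $\ell$-adic representation. First I would observe that $[K(C):K(\ell^m C)]=1$ is merely a rewriting of $K(C)=K(\ell^m C)$, so the left-hand sides of the two statements coincide.

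For the right-hand side, I would compare the two conditions on the Galois image. Corollary \ref{slucaj-jednakosti} asserts that for every matrix $\sabcd$ in $\rho_{E,\ell^\infty}(G_K)$, if $c \equiv 0 \pmod{\ell^{n-m}}$ then $c \equiv 0 \pmod{\ell^n}$. Since this condition only involves the residue of $c$ modulo $\ell^n$, it holds for the full $\ell$-adic image if and only if it holds for its reduction $\rho_{E,\ell^n}(G_K) \leq \GL_2(\Z/\ell^n\Z)$, and the latter is precisely the definition of $\rho_{E,\ell^n}(G_K)$ being an $(n-m)$-UTT subgroup (with $m$ replaced by $n-m$ in the UTT definition).

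The only remaining point is the compatibility of the choice of basis. A $\Z_\ell$-basis for the $\ell$-adic Tate module reduces to a $\Z/\ell^n\Z$-basis for $E[\ell^n]$, and conversely, since the reduction map $\GL_2(\Z_\ell) \to \GL_2(\Z/\ell^n\Z)$ is surjective, any basis at level $\ell^n$ lifts to a $\Z_\ell$-basis of the Tate module, with matrices transforming compatibly. Hence the existence of a basis making the mod $\ell^n$ image $(n-m)$-UTT is equivalent to the existence of a basis making the $\ell$-adic image satisfy the hypothesis of Corollary \ref{slucaj-jednakosti}. I do not anticipate a substantive obstacle here, as the statement is a notational repackaging of an already proved result; the mild bookkeeping lies in verifying that the UTT property really only depends on the mod $\ell^n$ image, which is immediate from its definition.
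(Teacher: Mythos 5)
Your proposal is correct and matches the paper, which offers no separate argument but simply presents this corollary as a rewriting of Corollary \ref{slucaj-jednakosti} in the UTT terminology; your extra bookkeeping about the basis lifting from $E[\ell^n]$ to the Tate module and the condition depending only on the mod $\ell^n$ reduction is exactly the (routine) content being suppressed there.
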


\subsubsection{Upper bounds on the size of $UTT$ subgroups} \phantom{a} \\
\noindent Let $m<n$. It turns out that an $m$-UTT subgroup $H\leq \GL_2(\Z/p^n\Z)$ is rather small compared to its image under the canonical surjection $\pi:\GL_2(\Z/p^n\Z)\to \GL_2(\Z/p^m\Z)$.

\begin{proposition}\label{3parametra}
    Let $k\leq m<n$ be positive integers and let $\pi:\GL_2(\Z/p^n\Z)\to \GL_2(\Z/p^m\Z)$ denote the canonical surjection. If $H\leq \GL_2(\Z/p^n\Z)$ is $m$-UTT and contains an element of the form $\sm{a & b \\ p^{k-1}c & d}$ with $c \neq 0 \pmod p$, then $|H|\leq p^{2n-2m+2k-2}|\pi(H)|$.
\end{proposition}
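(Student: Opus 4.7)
The plan is to bound $|\ker(\pi|_H)|$, which equals $|H|/|\pi(H)|$; the target inequality reduces to $|\ker(\pi|_H)| \leq p^{2(n-m)+2(k-1)}$.

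First I would describe the kernel. Every $X \in \ker(\pi|_H)$ is congruent to $I$ modulo $p^m$, so $X = I + p^m N$ with $N = \sm{\alpha & \beta \\ \gamma & \delta}$ having entries in $\Z/p^{n-m}\Z$. The $m$-UTT hypothesis applied to $X \in H$ forces its $(2,1)$-entry $p^m\gamma$, already divisible by $p^m$, to vanish modulo $p^n$, so $\gamma = 0$ and every kernel element has the form
$$X_{\alpha,\beta,\delta} = \sm{1+p^m\alpha & p^m\beta \\ 0 & 1+p^m\delta}, \qquad (\alpha,\beta,\delta) \in (\Z/p^{n-m}\Z)^3.$$
This crude estimate $|\ker(\pi|_H)| \leq p^{3(n-m)}$ already settles the proposition when $n-m \leq 2(k-1)$, so I would henceforth assume $n-m > 2(k-1)$.

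The main step is to extract a sharper constraint via conjugation by $M = \sm{a & b \\ p^{k-1}c & d} \in H$. For any $X = X_{\alpha,\beta,\delta}$ in the kernel, $MXM^{-1}$ lies in $H$ (since $H$ is a subgroup) and in $\ker(\pi|_H)$ (since $\pi(MXM^{-1}) = I$), so by the parametric description above its $(2,1)$-entry must vanish modulo $p^n$. A direct matrix computation gives
$$(MXM^{-1})_{21} = \frac{p^{m+k-1}c}{\det M}\bigl(d(\alpha-\delta) - p^{k-1}c\beta\bigr),$$
and clearing the unit factor $c/\det M$ yields the linear congruence
$$d(\alpha-\delta) - p^{k-1}c\beta \equiv 0 \pmod{p^{n-m-k+1}}.$$

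The final step is a counting argument. Viewed as a single linear congruence in the two variables $\alpha-\delta$ and $\beta$ modulo $p^{n-m-k+1}$, and using that the coefficient of $\beta$ has $p$-adic valuation exactly $k-1$ (because $c$ is a unit), an elementary gcd computation shows that the number of admissible pairs is at most $p^{(n-m-k+1)+(k-1)} = p^{n-m}$. Each such pair lifts to $p^{k-1}$ values of $\beta$ in $\Z/p^{n-m}\Z$ and to $p^{n-m} \cdot p^{k-1}$ pairs $(\alpha,\delta) \in (\Z/p^{n-m}\Z)^2$ (with $\alpha$ free and $\delta$ one of $p^{k-1}$ lifts of $\alpha - (\alpha-\delta)$). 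Combining,
$$|\ker(\pi|_H)| \leq p^{n-m} \cdot p^{k-1} \cdot p^{n-m+k-1} = p^{2(n-m)+2(k-1)},$$
as desired.

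The main obstacle I anticipate is the matrix computation of $(MXM^{-1})_{21}$ together with the careful tracking of $p$-adic valuations needed to see that this entry has valuation at least $m$, so that the $m$-UTT property (or equivalently the kernel characterization) is genuinely exploited. Once the linear congruence is in hand, the counting is elementary and, notably, does not require assuming that the diagonal entries of $M$ are units modulo $p$.
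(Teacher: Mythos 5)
Your proof is correct: the reduction to bounding $|\ker(\pi|_H)|=|H|/|\pi(H)|$, the observation that the $m$-UTT property forces every element of the kernel to be upper triangular of the form $I+p^m\sm{\alpha & \beta \\ 0 & \delta}$, the congruence $d(\alpha-\delta)-p^{k-1}c\beta\equiv 0 \pmod{p^{n-m-k+1}}$ obtained by conjugating with the special element $M$ (I checked the $(2,1)$-entry computation), and the final count, including the preliminary case split $n-m\le 2(k-1)$ where the crude bound $p^{3(n-m)}$ suffices, all hold up; the gcd count of solutions is indeed at most $p^{(n-m-k+1)+(k-1)}$ whatever the valuation of $d$. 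The route is genuinely different from the paper's, though built on the same two ingredients (the special element with lower-left entry $p^{k-1}c$, $c$ a unit, and the fact that UTT forces elements lying over $I$ to be upper triangular). The paper instead fixes the image $h_0=\pi(M)$, notes all fibers of $\pi|_H$ have equal size, and bounds the fiber over $h_0$: for a lift $h=\sm{a & b \\ p^{k-1}c & d}$ and a fixed lift $h'$ of $h_0^{-1}$, the products $hh'$ and $h'h$ lie over $I$, and their vanishing lower-left entries show that $a$ and $d$ are determined modulo $p^{n-k+1}$ by $c$ and $h_0$; counting $p^{n-m}$ choices each for $b,c$ and $p^{k-1}$ each for $a,d$ gives the same bound with no case distinction. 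So the paper constrains the diagonal entries of an arbitrary lift via an approximate inverse, while you constrain the kernel directly via conjugation, getting a single linear relation between $\alpha-\delta$ and $\beta$; your version is arguably cleaner as a self-contained kernel bound, whereas the paper's intermediate conclusion (that $a,d$ are pinned down mod $p^{n-k+1}$ by $c$) is reused verbatim in the proof of the subsequent, sharper proposition with the trace condition, which your formulation would not feed into as directly.
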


\begin{proof}
    Take a matrix $h_0=\sm{a_0 & b_0 \\ p^{k-1}c_0 & d_0}\in \pi(H)$ with $c_0 \not \equiv 0 \pmod{p}$.  Since the preimage of each element of $\pi(H)$ has the same cardinality, it suffices to prove that there are at most $p^{2n-2m+2k-2}$ matrices $h \in H$ with $\pi(h)=h_0$. We'll call such matrices $h$ \emph{lifts} of $h_0$. 

    Consider any lift $h=\sm{a & b \\ p^{k-1}c & d} \in H$ of $h_0$. Let $h'=\frac{1}{\det h}\sm{d' & -b' \\ -p^{k-1}c' & a'} \in H$ be any matrix with $\pi(h')=h_0^{-1}$. 

    Consider the bottom left entries of $hh'$ and $h'h$. They must be $0 \pmod{p^m}$, so by the UTT property they are also $0 \pmod{p^n}$. This yields the following relations: \begin{align*}
        p^{k-1}(cd'-c'd)&\equiv 0 \pmod{p^n}, \\
        p^{k-1}(ca'-c'a)&\equiv 0 \pmod{p^n}.
    \end{align*}
    This means that $a$ and $d$ are uniquely determined modulo $p^{n-k+1}$ by $c$ and $h_0$. Thus, there are at most $p^{k-1}$ choices for $a$ and $d$ once we choose $c$. Furthermore, there are at most $p^{n-m}$ choices for $b$ and $c$ which lift $b_0$ and $c_0$. In total, $h_0$ has at most $p^{2n-2m+2k-2}$ lifts.
\end{proof}

\begin{definition}
    Let $k$ and $n$ be positive integers with $k\leq n$. We say a subgroup $H \leq \GL_2(\Z/p^n\Z)$ is uppertriangular mod $p^k$ if every $\sabcd \in H$ satisfies $c \equiv 0 \pmod{p^k}$.
\end{definition}

Taking $k=m$ in Proposition \ref{3parametra} yields the following consequence.

\begin{corollary}
    Let $m<n$ be positive integers and let $\pi:\GL_2(\Z/p^n\Z)\to \GL_2(\Z/p^m\Z)$ denote the canonical surjection. If $H\leq \GL_2(\Z/p^n\Z)$ is $m$-UTT and $H$ isn't uppertriangular mod $p^m$, then $|H|\leq p^{2n-2}|\pi(H)|$.
\end{corollary}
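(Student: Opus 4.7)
The plan is to reduce the statement to a direct application of Proposition \ref{3parametra} by choosing the parameter $k$ judiciously. The hypothesis that $H$ is not uppertriangular modulo $p^m$ means there exists some matrix $\sabcd \in H$ whose bottom-left entry satisfies $c \not\equiv 0 \pmod{p^m}$; equivalently, the $p$-adic valuation of $c$ lies in $\{0, 1, \ldots, m-1\}$.

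First I would let $k-1$ denote this valuation, so that $k \in \{1, 2, \ldots, m\}$, and write $c = p^{k-1} c'$ with $c' \not\equiv 0 \pmod p$. The matrix $\sm{a & b \\ p^{k-1} c' & d} \in H$ then witnesses the hypothesis of Proposition \ref{3parametra} for this particular value of $k$.

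Next I would apply Proposition \ref{3parametra} directly to obtain $|H| \leq p^{2n - 2m + 2k - 2}\, |\pi(H)|$. Since $k \leq m$, the exponent $2n - 2m + 2k - 2$ is bounded above by $2n-2$, which yields the claimed bound $|H| \leq p^{2n - 2}\, |\pi(H)|$.

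There is essentially no obstacle: the corollary is just Proposition \ref{3parametra} specialized to the worst-case value $k=m$, combined with the trivial observation that the hypothesis of the proposition is satisfiable for some $k \in \{1, \ldots, m\}$ precisely when $H$ fails to be uppertriangular mod $p^m$. The only mildly subtle point is recognizing that the bound is monotonically weakest at $k=m$, so taking the minimal admissible $k$ (if desired) is unnecessary for this consequence.
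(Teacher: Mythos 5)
Your proof is correct and follows essentially the same route as the paper, which simply deduces the corollary by taking $k=m$ in Proposition \ref{3parametra}. Your version is in fact slightly more careful: by letting $k-1$ be the actual valuation of the offending bottom-left entry and using that the bound $p^{2n-2m+2k-2}$ is largest at $k=m$, you cover the case where no entry has valuation exactly $m-1$, a point the paper's one-line deduction glosses over.
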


If the group $H$ contains matrices of some particular shape, we can strengthen the claim. This seems like a slight improvement, but if $m$ and $k$ are small compared to $n$, this is actually an improvement by a factor of the order of magnitude $p^n$.

\begin{proposition}
    Let $k\leq m<n$ be positive integers with $n\geq m+2k-2$ and let $\pi:\GL_2(\Z/p^n\Z)\to \GL_2(\Z/p^m\Z)$ denote the canonical surjection. Suppose that $H\leq \GL_2(\Z/p^n\Z)$ is $m$-UTT. If $H$ contains an element of the form $\sm{a & b \\ p^{k-1}c & d}$ with $c, a+d \neq 0 \pmod p$, then $|H|\leq p^{n-m+4k-4}|\pi(H)|$.
\end{proposition}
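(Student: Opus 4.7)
The strategy is to bound $|K| := |H \cap \ker\pi|$, since $|H| = |K|\cdot|\pi(H)|$.

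First I would pin down the shape of a typical $\kappa \in K$: because $\kappa \equiv I \pmod{p^m}$, we have $\kappa = I + p^m M$ for some matrix $M$, and the $m$-UTT property applied to $\kappa \in H$ forces the bottom-left entry $p^m M_{21}$ to vanish mod $p^n$, so $M_{21} = 0$ in $\Z/p^{n-m}\Z$. Hence every element of $K$ has the form
\[
\kappa = \begin{bsmallmatrix} 1+p^m\alpha & p^m\beta \\ 0 & 1+p^m\delta \end{bsmallmatrix},\quad \alpha,\beta,\delta \in \Z/p^{n-m}\Z.
\]

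Next, since $K$ is normal in $H$, for any $h \in H$ the conjugate $h\kappa h^{-1}$ lies in $K$, so its bottom-left entry is zero mod $p^n$. Taking $h = h_0 := \sm{a & b \\ p^{k-1}c & d}$ and computing directly gives
\[
(h_0 \kappa h_0^{-1})_{21} = \frac{p^{m+k-1}c}{\det h_0}\bigl[d(\alpha-\delta) - p^{k-1}c\beta\bigr],
\]
so (since $c/\det h_0$ is a unit) one obtains $d(\alpha-\delta) - p^{k-1}c\beta \equiv 0 \pmod{p^{n-m-k+1}}$. The key additional observation is that $h_0^{-1} = (\det h_0)^{-1}\sm{d & -b \\ -p^{k-1}c & a}$ also lies in $H$ and has the same shape (bottom-left $p^{k-1}$ times a unit, unit trace); running the same computation with $h_0^{-1}$ in place of $h_0$ produces the complementary congruence $a(\alpha-\delta) + p^{k-1}c\beta \equiv 0 \pmod{p^{n-m-k+1}}$.

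Adding the two congruences, the hypothesis that $a+d$ is a unit mod $p$ yields $\alpha \equiv \delta \pmod{p^{n-m-k+1}}$; substituting this back into the first gives $p^{k-1}c\beta \equiv 0 \pmod{p^{n-m-k+1}}$, and since $c$ is a unit, $\beta \equiv 0 \pmod{p^{n-m-2k+2}}$. The hypothesis $n \geq m+2k-2$ is exactly what makes the exponent $n-m-2k+2$ non-negative, so the final congruence on $\beta$ is meaningful.

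To finish, I would count the triples $(\alpha,\beta,\delta)$: $\delta$ is free in $\Z/p^{n-m}\Z$; given $\delta$, the parameter $\alpha$ is constrained to a coset mod $p^{n-m-k+1}$, giving $p^{k-1}$ possibilities; and $\beta$ is constrained to a multiple of $p^{n-m-2k+2}$, giving $p^{2k-2}$ possibilities. Therefore $|K| \leq p^{(n-m)+(k-1)+(2k-2)} = p^{n-m+3k-3} \leq p^{n-m+4k-4}$ (using $k\geq 1$), which gives the claimed bound on $|H|$. The main obstacle is the explicit bookkeeping in the matrix computation yielding the formula for $(h_0\kappa h_0^{-1})_{21}$ and correctly tracking the powers of $p$; once both congruences are in hand the cancellation using $a+d$ a unit, and the resulting squeeze on $\beta$, is routine.
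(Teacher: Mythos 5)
Your proof is correct, and it reaches the stated bound by a genuinely different mechanism than the paper, even though both are elementary entry-chasing arguments built on the $m$-UTT property. The paper fixes the given matrix $h$, sets $h_0=\pi(h)$, and bounds the number of lifts of $h_0$ in $H$ directly: multiplying a lift by a lift of $h_0^{-1}$ (as in Proposition \ref{3parametra}) pins the diagonal entries $a,d$ down modulo $p^{n-k+1}$, and then the square $h^2$, whose bottom-left entry is $p^{k-1}c(a+d)$ with $c(a+d)$ a unit, pins $b$ down modulo $p^{n-2k+2}$; counting $c,a,d,b$ separately gives at most $p^{n-m}\cdot p^{k-1}\cdot p^{k-1}\cdot p^{2k-2}=p^{n-m+4k-4}$ lifts. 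You instead bound $|H\cap\ker\pi|$, noting that the UTT property forces every kernel element to have the upper-triangular shape $I+p^m\sm{\alpha & \beta \\ 0 & \delta}$, and you extract congruences on $(\alpha,\beta,\delta)$ from the conjugates $h\kappa h^{-1}$ and $h^{-1}\kappa h$, the trace hypothesis entering when the two congruences are added. Since the lifts of $h_0$ form the coset $h\,(H\cap\ker\pi)$, the two proofs count the same quantity; but your relation $\alpha\equiv\delta\pmod{p^{n-m-k+1}}$ captures a joint constraint on the two diagonal parameters which the paper bounds independently, so you in fact obtain the slightly sharper estimate $|H|\leq p^{n-m+3k-3}|\pi(H)|\leq p^{n-m+4k-4}|\pi(H)|$. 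The hypothesis $n\geq m+2k-2$ is used in both arguments at the analogous spot (dividing a congruence by $p^{k-1}$ to constrain $b$, respectively $\beta$), and your computation of $(h\kappa h^{-1})_{21}$, the observation that $h^{-1}$ has the required shape, and the final count all check out.
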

\begin{proof}
    Let $h=\sm{a & b \\ p^{k-1}c & d}$ with $c, a+d \neq 0 \pmod p$. Let $h_0=\pi(h)$. As in the previous proof, it suffices to prove that $h_0$ has at most $p^{n-m+4k-4}$ lifts to $H$. From the proof of Proposition \ref{3parametra} it follows that $a$ and $d$ are uniquely determined modulo $p^{n-k+1}$ by $c$ and $h_0$. 

    Consider $h^2=\sm{a^2+p^{k-1}bc & * \\ p^{k-1}c(a+d) & *}$. Since $a+d \not \equiv 0 \pmod{p}$, we know that $a^2+p^{k-1}bc$ is uniquely determined modulo $p^{n-k+1}$. But this implies that $b$ is uniquely determined modulo $p^{n-2k+2}$. Thus, there are at most $p^{2k-2}$ possible choices for $b$ once we've chosen $a,c,d$. We also know there are at most $p^{n-m}$ choices for $c$ and at most $p^{k-1}$ choices for both $a$ and $d$. This implies there are at most $p^{n-m+4k-4}$ lifts of $h_0$ and the claim is proven.
\end{proof}

\begin{corollary} \label{m=1, k=1 version}
    Let $H\leq \GL_2(\Z/p^n\Z)$ be a UTT subgroup and denote by $\pi:\GL_2(\Z/p^n\Z)\to \GL_2(\Z/p\Z)$ the canonical surjection. Then $|H|\leq p^{2n-2}|\pi(H)|$ if $H$ is not uppertriangular mod $p$. Furthermore, if $H$ contains a matrix whose trace and bottom left entry are non-zero mod $p$, then $|H|\leq p^{n-1}|\pi(H)|$
\end{corollary}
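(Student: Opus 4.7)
The plan is to obtain both inequalities as direct specializations of the two preceding propositions to the case $m = k = 1$, so that the UTT hypothesis means $1$-UTT and the ``bottom-left entry'' part of the input matrix already has valuation zero.

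For the first inequality, suppose $H$ is UTT and not uppertriangular mod $p$. By definition of the latter, there exists a matrix $\sabcd \in H$ with $c \not\equiv 0 \pmod{p}$, which I can regard as an element of the form $\sm{a & b \\ p^{k-1}c & d}$ with $k=1$. Proposition \ref{3parametra} with $m = k = 1$ then applies and gives $|H| \leq p^{2n - 2m + 2k - 2}|\pi(H)| = p^{2n-2}|\pi(H)|$, which is the first claim.

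For the second inequality, suppose $H$ contains a matrix whose trace and bottom-left entry are both non-zero mod $p$, say $\sabcd$ with $c, a+d \not\equiv 0 \pmod{p}$. Writing it as $\sm{a & b \\ p^{k-1}c & d}$ with $k=1$, the hypothesis $n \geq m + 2k - 2 = 1$ of the refined proposition is automatic, so that proposition applies with $m = k = 1$ and yields $|H| \leq p^{n - m + 4k - 4}|\pi(H)| = p^{n-1}|\pi(H)|$.

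There is no real obstacle: both bounds in the source propositions collapse under substitution $m = k = 1$ to exactly the bounds claimed. The only thing to double-check is the bookkeeping, namely that ``$H$ is not uppertriangular mod $p$'' is precisely the statement that some matrix in $H$ has $c \not\equiv 0 \pmod p$ (immediate from the definition of ``uppertriangular mod $p^k$'' with $k=1$), and that the dimensional hypothesis of the second proposition is trivially satisfied in this parameter range.
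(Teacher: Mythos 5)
Your proposal is correct and is essentially identical to the paper's own proof, which simply takes $k=m=1$ in Proposition \ref{3parametra} and the subsequent refined proposition; your extra bookkeeping (identifying ``not uppertriangular mod $p$'' with the existence of a matrix having $c \not\equiv 0 \pmod p$, and noting that $n \geq m+2k-2$ is automatic) is exactly the right check and nothing more is needed.
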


\begin{proof}
    Take $k=m=1$ in the previous propositions.
\end{proof}

We'll have to deal with the case when we have a $p$-isogeny over a base field, and a $p^3$-isogeny over an extension of degree $p$. The bounds we've obtained so far are not suitable for this case, so we need another result.

For $p$-adic numbers $x$ and $y$, write $y=x+\mathcal O(p^k)$ if $y-x \in p^k \Z_p$. 

\begin{lemma}\label{padskirazvoj}
    Let $p>2$ be a prime number. Let $A=\sm{a & pb \\ pc & d}\in \GL_2(\Z_p)$ be a matrix such that $A \equiv I\pmod{p}$. Let $A^p=\sm{ a_p & pb_p  \\ pc_p & d_p}$. Then $pc_p=p^2c+\mathcal O(p^3)$.
\end{lemma}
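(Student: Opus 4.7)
The plan is to compute $A^p$ directly via the binomial theorem after writing $A = I + pB$. Since the hypothesis $A \equiv I \pmod p$ means that $a-1, d-1 \in p\Z_p$ (while $pb, pc$ are already in $p\Z_p$), we may put
\[
A = I + pB, \qquad B = \begin{bmatrix} (a-1)/p & b \\ c & (d-1)/p \end{bmatrix} \in M_2(\Z_p),
\]
so that the bottom-left entry of $B$ is precisely $c$.

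Next I would use the fact that $I$ and $pB$ commute, so the binomial theorem applies without issue:
\[
A^p = (I+pB)^p = \sum_{k=0}^{p} \binom{p}{k} p^k B^k.
\]
The goal is to show this is $I + p^2 B \pmod{p^3}$. For $k=0$ we get $I$; for $k=1$ we get $p \cdot pB = p^2 B$; for $2 \le k \le p-1$ each coefficient $\binom{p}{k}$ is divisible by $p$ so $\binom{p}{k}p^k$ is divisible by $p^{k+1} \ge p^3$; and for $k=p$ the term is $p^p B^p$ which lies in $p^3 M_2(\Z_p)$ since $p \ge 3$. The hypothesis $p > 2$ enters exactly at the $k=2$ term: $\binom{p}{2} = p(p-1)/2$ is a $p$-adic integer divisible by $p$ only when $p$ is odd, and this is what fails for $p=2$.

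Consequently $A^p \equiv I + p^2 B \pmod{p^3}$, and reading off the bottom-left entry gives that the $(2,1)$-entry of $A^p$ equals $p^2 c + \mathcal{O}(p^3)$. Comparing with $A^p = \sm{a_p & pb_p \\ pc_p & d_p}$ yields $pc_p = p^2 c + \mathcal{O}(p^3)$, as claimed.

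There is no serious obstacle here; this is really a direct computation. The only subtlety worth flagging is the role of $p > 2$ in ensuring that the $k=2$ term of the binomial expansion is genuinely in $p^3 \Z_p$, which is what makes the first-order truncation $A^p \equiv I + p^2 B$ valid to the required precision.
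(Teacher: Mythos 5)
Your proof is correct and follows exactly the paper's argument: write $A=I+pB$ and apply the binomial theorem to get $A^p=I+p^2B+\mathcal O(p^3)$, then read off the bottom-left entry. You simply spell out the term-by-term estimates (and the role of $p>2$) that the paper leaves implicit.
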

\begin{proof}
    Write $A=I+pB$ for $B \in \mathcal M_2(\Z_p)$. Then, by the binomial theorem, we have $$A^p=I+p^2B+\mathcal O(p^3),$$
    so $pc_p=p^2c+\mathcal O(p^3)$. 
\end{proof}

\begin{proposition}\label{onasituacija}
    Let $n\geq 3$ and let $H$ be a $2$-UTT subgroup of $\GL_2(\Z/p^n\Z)$ which is uppertriangular modulo $p$. Denote by $\pi:\GL_2(\Z/p^n\Z)\to \GL_2(\Z/p\Z)$ the canonical surjection. Then any element of $\pi^{-1}(I)$ is uppertriangular mod $p^n$. Consequently, $|H|$ divides $p^{3n-3}|\pi(H)|$.
\end{proposition}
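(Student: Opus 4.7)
The plan is to exploit Lemma \ref{padskirazvoj} in the following way: the $p$-th power of any element of $H$ reducing to $I$ mod $p$ is forced to have an especially small bottom-left entry, which then triggers the $2$-UTT property and pushes that entry all the way to $0$. Feeding the result back into the original element gives the claim.

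First, I would fix an arbitrary $A \in H \cap \pi^{-1}(I)$. Since $A \equiv I \pmod p$, write $A = \sm{1+pa & pb \\ pc & 1+pd}$. Since $A$ lies in $H$ and $H$ is a group, so does $A^p$. By Lemma \ref{padskirazvoj}, the bottom-left entry of $A^p$ is $p^2 c + \mathcal{O}(p^3)$; in particular it is divisible by $p^2$. The $2$-UTT hypothesis then forces this entry to be $\equiv 0 \pmod{p^n}$, and since $n \geq 3$ it is in particular $\equiv 0 \pmod{p^3}$. Comparing the two expressions modulo $p^3$ gives $p^2 c \equiv 0 \pmod{p^3}$, i.e. $c \equiv 0 \pmod p$.

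But now the bottom-left entry of $A$ itself, namely $pc$, is divisible by $p^2$. Applying the $2$-UTT property a second time, now to $A$ directly, yields $pc \equiv 0 \pmod{p^n}$. Hence $A$ is genuinely upper triangular modulo $p^n$, which is the first assertion.

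For the divisibility statement, let $U \leq \GL_2(\Z/p^n\Z)$ denote the subgroup of upper-triangular matrices that reduce to $I$ modulo $p$, i.e.\ matrices of the form $\sm{1+pa & pb \\ 0 & 1+pd}$ with $a,b,d \in \Z/p^{n-1}\Z$. A direct count gives $|U| = p^{3n-3}$. By what we just showed, $H \cap \pi^{-1}(I) \leq U$, so by Lagrange $|H \cap \pi^{-1}(I)|$ divides $p^{3n-3}$. Since $|H| = |\pi(H)| \cdot |H \cap \pi^{-1}(I)|$, this yields the claimed divisibility. The only conceptual step is spotting that \emph{one} application of the $2$-UTT property to $A^p$ (which is where the $p$-adic expansion produces the factor $p^2$) is exactly what is needed to then invoke $2$-UTT a second time on $A$; the rest is bookkeeping.
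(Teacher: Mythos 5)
Your argument is correct and is essentially the paper's own proof: both hinge on Lemma \ref{padskirazvoj} giving bottom-left entry $p^2c+\mathcal O(p^3)$ for the $p$-th power, the $2$-UTT property (applied to $A^p$ and then to $A$), and the count of $p^{3n-3}$ uppertriangular lifts of $I$. The only difference is presentational — you argue directly that $c\equiv 0\pmod p$, while the paper runs the same computation as a proof by contradiction.
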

\begin{proof}
    Suppose that there is a matrix $h=\sm{a & pb \\ pc & d} \in H$ which is not uppertriangular and which satisfies $h \equiv I \pmod p$. Then by UTT property we have $c \not \equiv 0 \pmod{p}$. However, Lemma \ref{padskirazvoj} implies that the left most entry of $h^p$ is divisible by $p^2$ and isn't divisible by $p^3$, a contradiction.

    Thus, any matrix $h \in H$ with $\pi(h)=I$ is uppertriangular. Thus, the kernel of $\pi$ is a subgroup of the group of all uppertriangular lifts of $I$ in $\GL_2(\Z/p^n\Z)$. Since there are $p^{3n-3}$ such lifts, we conclude that $|\ker (\pi)|=|H|/|\pi(H)|$ divides $p^{3n-3}$.
\end{proof}

\section{Isogeny degrees over cubic extensions}

\subsection{Prime powers}

\subsubsection{Powers of two}

We now discuss cyclic isogenies whose degrees are powers of two over cubic fields. We start by connecting the previous chapter with images of $2$-adic Galois representations. 

\begin{corollary} \label{lema mod 2}
    Let $E /K$ be an elliptic curve over a number field. Let $n\geq 2$ be a positive integer and let $C$ be a cyclic subgroup of $E(\overline K)$ of order $2^n$. Suppose that the order of $\rho_{E, 2^n}(G_K)$ is divisible by $3$ and $K(C)=K(2^{n-1}C)$. Then $|\rho_{E, 2^n}(G_K)|\leq 3\cdot 2^n$.
\end{corollary}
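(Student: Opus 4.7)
The plan is to combine the two main tools developed earlier in the section: Corollary \ref{rewritten-as-utt}, which translates the hypothesis $K(C)=K(2^{n-1}C)$ into a structural statement about the image of $\rho_{E,2^n}$, and Corollary \ref{m=1, k=1 version}, which then bounds the size of that image. Setting $H := \rho_{E,2^n}(G_K)$, the hypothesis $K(C)=K(2^{n-1}C)$ is exactly the $m=n-1$ instance of Corollary \ref{rewritten-as-utt}, so after choosing a suitable basis for the $2$-adic Tate module, $H$ is a UTT subgroup of $\GL_2(\Z/2^n\Z)$.

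Next I would pass to the mod $2$ reduction $\pi:\GL_2(\Z/2^n\Z)\to \GL_2(\F_2)$. The kernel of $\pi$ is a $2$-group, so the hypothesis $3 \mid |H|$ forces $3 \mid |\pi(H)|$. I would then use the fact that $\GL_2(\F_2)\cong S_3$ has order $6$ and contains exactly two elements of order $3$, namely $\sm{0 & 1 \\ 1 & 1}$ and $\sm{1 & 1 \\ 1 & 0}$; both have trace $1$ and bottom-left entry $1$ in $\F_2$. Consequently, any preimage in $H$ of such an order-$3$ element of $\pi(H)$ is a matrix whose trace and whose bottom-left entry are both non-zero modulo $2$.

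This places $H$ in exactly the situation required by the sharper half of Corollary \ref{m=1, k=1 version}, so I would conclude
\[
|H| \;\leq\; 2^{n-1}\,|\pi(H)| \;\leq\; 2^{n-1}\cdot |\GL_2(\F_2)| \;=\; 2^{n-1}\cdot 6 \;=\; 3\cdot 2^n,
\]
which is the desired bound.

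There is no real obstacle: the argument is essentially the assembly of the two corollaries. The one substantive observation is the small arithmetic fact that every order-$3$ element of $\GL_2(\F_2)$ has non-zero bottom-left entry, since this is what makes the stronger $p^{n-1}$ inequality in Corollary \ref{m=1, k=1 version} available, as opposed to the weaker $p^{2n-2}$ bound which would give only $|H|\leq 6\cdot 2^{2n-2}$ and not suffice.
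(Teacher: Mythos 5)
Your proof is correct and follows essentially the same route as the paper: translate $K(C)=K(2^{n-1}C)$ via Corollary \ref{rewritten-as-utt} into the statement that $H=\rho_{E,2^n}(G_K)$ is UTT, note that the mod $2$ image contains an order-$3$ element, which necessarily has odd trace and odd bottom-left entry, and apply the sharper bound $|H|\leq 2^{n-1}|\pi(H)|\leq 2^{n-1}\cdot 6$ from Corollary \ref{m=1, k=1 version}. The only (harmless) difference is that you deduce $3\mid|\pi(H)|$ from the fact that $\ker\pi$ is a $2$-group, whereas the paper takes an order-$3$ element of $H$ and uses the UTT property to rule out that it reduces to the identity; both steps are valid.
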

\begin{proof}
   We see from Corollary \ref{rewritten-as-utt} that the mod $2^n$ image $\rho_{E, 2^n}(G_K)$ is a UTT subgroup of $\GL_2(\Z/2^n\Z)$.
   
   Furthermore, we claim that the mod $2$ image contains an element of order $3$. Namely, let $g$ be an element of order $3$ in $\rho_{E,2^n}(G_K)$. Then $\pi(g)$ has order $1$ or $3$, where $\pi:\GL_2(\Z/2^n\Z)\to \GL_2(\Z/2\Z)$ is the canonical surjection. 
     
   If $\pi(g)\equiv I \pmod{2}$, then the bottom-left entry of $g$ is even, so it must be $0$ mod $2^n$ by the UTT property. If we denote $g=\sm{a & b \\ 0 & c}$, then $g^3=I$ implies $a^3=c^3=1$, which implies $a=c=1$. Finally, direct calculation yields $b=0$ and $g=I$, a contradiction. Thus, $\pi(g)$ has order $3$.
    
    Elements of order $3$ in $\GL_2(\Z/2\Z)$ are $h=\sm{1 & 1 \\ 1 & 0}$ and $h^2=\sm{0 & 1 \\ 1 & 1}$, so the mod $2$ image contains both of them. In particular, it contains $\sm{1 & 1 \\ 1 & 0}$, a matrix with odd bottom left entry and odd trace, so the second claim from Corollary \ref{m=1, k=1 version} applies to our case. Since the mod $2$ image has at most $6$ elements, we obtain the bound $2^{n-1}\cdot 6=3\cdot 2^n$ on the mod $2^n$ image.
\end{proof}

In \cite{rouse-zureickbrown-2adic}, Rouse and Zureick-Brown classified all possible images of $2$-adic Galois representations of elliptic curves over $\Q$. They also provide a list of all possible images, which we use in the next proposition.  They also provided a webpage for each of the groups, which contains information on the rational points on the corresponding modular curves. The website can be found on  \begin{center}
    
\hyperlink{https://users.wfu.edu/rouseja/2adic/}{\texttt{https://users.wfu.edu/rouseja/2adic/}}.

\end{center}
From \cite[Corollary 1.3.]{rouse-zureickbrown-2adic}, we see that the index of the $2$-adic representation divides $64$ or $96$, and the $2$-adic representations are defined modulo $32$.

\begin{proposition}\label{potencije 2 detaljno}
    Let $E$ be a non-CM elliptic curve defined over $\Q$. Then $E$ has no cyclic $32$-isogenies defined over cubic extensions of  $\Q$. If $E$ has a cyclic $16$-isogeny or $8$-isogeny defined over a cubic extension of $\Q$, then this isogeny is already defined over $\Q$.
    If $E$ has a $4$-isogeny defined over a cubic extension of $\Q$ which isn't defined over $\Q$, then the image $\rho_{2,\infty}(G_\Q)$ of the $2$-adic representation of $E$ satisfies $\langle \rho_{2,\infty}(G_\Q), -I \rangle =H_{20}$, where $H_i$ denotes the $i$-th group of the list provided in \cite{rouse-zureickbrown-2adic}.
\end{proposition}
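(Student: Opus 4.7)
The plan is to establish first that in the cubic setting a $2^n$-isogeny and all of its sub-isogenies share the same field of definition, and then split into two cases according to whether that common field is $\Q$ or a genuine cubic.

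Let $C \leq E(\overline{\Q})$ be the cyclic kernel of a hypothetical $2^n$-isogeny defined over a cubic field $K$, and set $C_i := 2^{n-i} C$ for $1 \leq i \leq n$. Each $C_i$ is defined over $K$. Applying Lemma \ref{p-ili-dijeli-p-1} with $\ell = 2$ to every step of the tower $\Q(C_1) \subseteq \Q(C_2) \subseteq \cdots \subseteq \Q(C_n)$, every index is $1$ or $2$, so $[\Q(C_n):\Q(C_1)]$ is a power of $2$. But $\Q(C_n) \subseteq K$ forces $[\Q(C_n):\Q] \in \{1,3\}$; combined with the power-of-two bound this gives $[\Q(C_n):\Q(C_1)] = 1$, i.e., all $C_i$ share the field of definition $\Q(C_1)$, which is either $\Q$ or the full cubic field $K$.

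If $\Q(C_1) = \Q$, the whole isogeny is already rational, so Mazur's theorem rules out $n = 5$, and for $n \in \{2,3,4\}$ this directly gives the desired conclusion. Otherwise $\Q(C_1) = K$, so $[\Q(C_1):\Q] = 3$ and the mod-$2$ image of $\rho_{E,2}$ acts transitively on $E[2] \setminus \{0\}$, equalling $A_3$ or $\GL_2(\F_2)$. In this case the isogeny over $K$ is genuinely new, and I plan to appeal to the Rouse--Zureick-Brown classification: every $2$-adic image $G = \rho_{E,2^\infty}(G_\Q)$ has index dividing $64$ or $96$ in $\GL_2(\Z_2)$ and is determined modulo $32$. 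The subgroup $H := \rho_{E,2^\infty}(G_K)$ is the index-$3$ stabilizer of $C_1$ in $G$, and the $2^n$-isogeny over $K$ exists if and only if $H$ reduces into the Borel of $\GL_2(\Z/2^n\Z)$.

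The main obstacle is the exhaustive check of this Borel-containment criterion across the RZB list, filtered to groups with mod-$2$ image $A_3$ or $\GL_2(\F_2)$, for $n \in \{2,3,4,5\}$. The UTT bounds from the preceding subsection substantially restrict which candidates can host a Borel-bound $H$ (since such an $H$ is $0$-UTT modulo $2^n$, and the sizes of $0$-UTT subgroups are tightly controlled by the propositions above), but the final verification is carried out by machine in Magma, with code in the paper's referenced Github repository. The expected outcome is that no $G$ admits such an $H$ for $n \in \{3,4,5\}$, while for $n = 2$ exactly one $G$ (up to adjoining $-I$) arises, namely $H_{20}$. Combined with the first case this yields all three claims of the proposition.
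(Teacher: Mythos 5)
Your reduction step is sound and matches the paper's: since each index $[\Q(C_{i+1}):\Q(C_i)]$ is $1$ or $2$ by Lemma \ref{p-ili-dijeli-p-1} and everything sits inside a cubic field, the whole $2$-power tower has a single field of definition, either $\Q$ (where Mazur--Kenku finishes) or the full cubic field $K$, in which case $3$ divides the order of the mod $2$ image. The gap is in what comes next: the entire content of the proposition in the second case -- that no $2$-adic image admits an index $3$ subgroup that is Borel mod $8$, $16$ or $32$ without the isogeny already being rational, and that only $H_{20}$ can occur for a new $4$-isogeny -- is deferred to an exhaustive Magma enumeration over the Rouse--Zureick-Brown list that you do not carry out; ``the expected outcome is\ldots'' is an assertion of exactly the statement to be proved, not a proof. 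You also gesture at the UTT bounds as ``substantially restricting'' the candidates but never apply them, even though they are precisely what makes a non-computational argument possible.

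The paper avoids the enumeration altogether for $n\geq 3$: Corollary \ref{lema mod 2} (the UTT counting bound, usable because $\Q(C)=\Q(4C)$ and $3\mid|\rho_{E,2^n}(G_\Q)|$) forces the mod $2^n$ image to have order at most $3\cdot 2^n$, hence mod $8$ index at least $2^6$; since by \cite{rouse-zureickbrown-2adic} the $2$-adic index divides $64$ or $96$, the only candidates are the three index-$64$ groups $H_{439},H_{440},H_{441}$, none of which is defined modulo $8$ -- contradiction, and similarly for $16$ and $32$. For $4$-isogenies the same bound narrows the mod $4$ image to order $6$ or $12$, which singles out $H_{20},H_{21},H_{180}$, and the latter two are eliminated because $X_{21}$ and $X_{180}$ have no rational points. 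Note this last arithmetic input is also missing from your plan: a purely group-theoretic pass over the RZB tree would not return ``exactly one $G$'', since groups like $H_{21}$ and $H_{180}$ survive the group-theoretic filter and are only excluded by the absence of rational points on their modular curves. As written, your proposal is a plausible strategy (the check is finite because the $2$-adic image is determined mod $32$), but it does not establish the proposition.
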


\begin{proof}  Without loss of generality we may assume that $-I$ belongs to the $2$-adic image. Otherwise, we may take a quadratic twist $E'$ of $E$ whose $2$-adic image does contain $-I$.

Suppose that $E$ has an $8$-isogeny over a cubic extension $K$ which isn't defined over $\Q$, and let $C$ be the kernel of the isogeny. We then have $K=\Q(C)$ by definition of $K$ and $\Q(C)=\Q(4C)$ as $[\Q(C):\Q(4C)]$ is a power of two which divides $3$. On the other hand $\rho_{E, 2^3}(G_K)$ is a subgroup of index $3$ of $\rho_{E,2^3}(G_\Q)$, which means that the order of $\rho_{E,2^3}(G_\Q)$ is divisible by $3$. Thus, we may apply Corollary  \ref{lema mod 2}, so the mod $8$ image has at most $24$ elements.

    This means that the index of the mod $8$ image in $\GL_2(\Z/8\Z)$ is at least $\frac{3 \cdot 2^9}{3 \cdot 2^3}=2^6$. There are $3$ possible $2$-adic images of index $64$, those are groups $H_{439}, H_{440}, H_{441}$ in Rouse and Zureick-Brown's list. However, none of them are defined modulo $8$, so their mod $8$ images have strictly smaller index than the $2$-adic images, so we get a contradiction. This proves the claim for $8$-isogenies.

    A similar argument proves the claim for $16$-isogenies. Namely, the mod $16$ image would have at most $48$ elements, which means the index of the $2$-adic representation would need to be at least $2^8$, which is impossible. 
    
    The non-existence of $32$-isogenies follows by the same argument, as there are no rational cyclic $32$-isogenies.

    Using the same reasoning for $4$-isogenies, we conclude that the mod $4$ image $\rho_{E,4}(G_\Q)$ has at most $12$ elements. The number of its elements is divisible by $6$ as it contains $-I$ and an index $3$ subgroup $\rho_{E,4}(G_K)$. 

    Thus, we can search through Rouse and Zureick-Brown's list for groups which have order $6$ or $12$ modulo $4$. We find three possible groups, $H_{20}, H_{21}$ and $H_{180}$. However, the corresponding modular curves $X_{21}$ and $X_{180}$ have no rational points as can be seen by looking at the corresponding webpages, so the image must be $H_{20}$. 
\end{proof}

We have the following immediate consequence.

\begin{lemma}
    The only powers of $2$ in $\Psi_\Q(3)$ are $\{1,2,4,8,16\}$.
\end{lemma}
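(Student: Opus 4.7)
The plan is to derive the lemma directly from Proposition \ref{potencije 2 detaljno} together with the known computation of $\Psi_\Q(1)$. The containment $\{1,2,4,8,16\}\subseteq\Psi_\Q(3)$ is essentially free: all five values already lie in $\Psi_\Q(1)=\{1,\dots,13,15,16,17,18,21,25,37\}$, and an isogeny defined over $\Q$ is automatically defined over every cubic extension of $\Q$ (and such extensions exist in abundance), so these powers of two do occur in $\Psi_\Q(3)$.

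The substantive direction is ruling out $2^n$ for $n\geq 5$. Since the excerpt observes that every non-CM elliptic curve with rational $j$-invariant is a quadratic twist of one defined over $\Q$, and quadratic twisting preserves isogeny degrees, I may work with a non-CM elliptic curve $E/\Q$. Suppose for contradiction that $E$ admits a cyclic $2^n$-isogeny over a cubic field $K$ with $n\geq 5$, and let $C=\langle P\rangle\subseteq E(\overline{\Q})$ be its kernel, a cyclic $G_K$-stable subgroup of order $2^n$. Then $C$ contains a unique subgroup of order $32$, namely $\langle 2^{n-5}P\rangle$, which is again $G_K$-stable because it is characterized intrinsically inside $C$. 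The quotient isogeny $E\to E/\langle 2^{n-5}P\rangle$ is therefore a cyclic $32$-isogeny defined over the same cubic field $K$. This contradicts the first sentence of Proposition \ref{potencije 2 detaljno}, which asserts that no such $32$-isogeny exists over any cubic extension of $\Q$.

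Combining the two directions gives the desired equality of sets. There is no real obstacle here — the work has already been done inside Proposition \ref{potencije 2 detaljno}; the only mild point to articulate carefully is the reduction from a hypothetical $2^n$-isogeny to a $32$-isogeny via the unique order-$32$ subgroup of a cyclic $2$-power group, which is automatic Galois-invariance rather than a separate argument.
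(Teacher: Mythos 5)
Your proposal is correct and follows the same route as the paper, which states this lemma as an immediate consequence of Proposition \ref{potencije 2 detaljno}: the values $1,2,4,8,16$ lie in $\Psi_\Q(1)\subseteq\Psi_\Q(3)$, and any cyclic $2^n$-isogeny with $n\geq 5$ over a cubic field yields a $32$-isogeny over that field (via the Galois-stable order-$32$ subgroup of the cyclic kernel), contradicting the proposition. Your explicit reduction step is exactly the small verification the paper leaves implicit.
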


It now makes sense to consider the group $H_{20}$ and the corresponding curve $X_{20}$. We can look at the webpage containing the info about this group on  \begin{center} \hyperlink{https://users.wfu.edu/rouseja/2adic/X20.html}{\texttt{https://users.wfu.edu/rouseja/2adic/X20.html}}
\end{center}
It turns out that $X_{20}$ is a genus $0$ curve which minimally covers curves $X_3$ and $X_7$ from the list. We have the following parametrizations of $X_3$, $X_7$, $X_{20}$:\begin{align}
    X_3: j&=-t^2+1728, \label{x3}\\
    X_7: j&=\frac{32t-4}{t^4}\label{x7},\\
    X_{20}: j&=\frac{32(t+1)(t^2-3)^3-4(t^2-3)^4}{(t+1)^4}\label{x20}.
\end{align}

\subsubsection{Powers of odd primes}

\begin{lemma}
    The only possible odd powers of primes in $\Psi_3$ are $\{1, 3, 3^2, 3^3, 5, 5^2, 7, 11, 13, 17, 37\}$.
\end{lemma}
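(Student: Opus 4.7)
The plan is to combine Proposition \ref{nadneparnimnistanovo} with a tower-of-fields argument via Lemma \ref{p-ili-dijeli-p-1} to reduce to a few cases. For $\ell \in \{7, 13, 37\}$ a direct application of Lemma \ref{p-ili-dijeli-p-1} forces a $\Q$-rational $\ell^2$-isogeny, contradicting $\ell^2 \notin \Psi_\Q(1)$; for $\ell = 3$ a more delicate argument using UTT size bounds together with a Magma check is needed.

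Take $E/\Q$ non-CM with cyclic $\ell^k$-isogeny of kernel $C$ over a cubic $K$, for $\ell$ an odd prime. Since $\ell^{k-1} C$ yields an $\ell$-isogeny of $E$ over $K$, Proposition \ref{nadneparnimnistanovo} forces $E$ to have an $\ell$-isogeny over $\Q$, with the single exception being $\ell = 7$ and $j(E) = 2268945/128$, which still yields $\ell = 7$. Mazur's classification gives $\ell \in \{3, 5, 7, 11, 13, 17, 37\}$. Build the chain $\Q = L_0 \subseteq \cdots \subseteq L_k = \Q(C) \subseteq K$ with $L_i = \Q(\ell^{k-i} C)$. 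By Lemma \ref{p-ili-dijeli-p-1}, each $[L_i : L_{i-1}]$ equals $\ell$ or divides $\ell - 1$; since the chain sits inside $K$, it also divides $3$. For $\ell \in \{5, 11, 17\}$ we have $\gcd(3, \ell(\ell-1)) = 1$, so every step is trivial and $\ell^k \in \Psi_\Q(1) \cap \{\ell^n\}$, contributing $\{1, 5, 11, 17, 25\}$.

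For $\ell \in \{7, 13, 37\}$ it is enough to rule out $\ell^2 \in \Psi_\Q(3) \setminus \Psi_\Q(1)$, since an $\ell^k$-isogeny over cubic $K$ contains an $\ell^2$-subisogeny over the same field. Assume $\ell^2 \in \Psi_\Q(3) \setminus \Psi_\Q(1)$: then $C_1 := \ell C$ (of order $\ell$) is $\Q$-rational, and Lemma \ref{p-ili-dijeli-p-1} applied to the $\ell$ lifts of $C_1$ to order $\ell^2$ leaves two options. Either $[\Q(D):\Q(C_1)] = \ell$ for every lift $D$, which is impossible since $\ell > 3$ while $\Q(D) \subseteq K$ is cubic; or some lift $D$ satisfies $\Q(D) = \Q(C_1) = \Q$, giving a $\Q$-rational $\ell^2$-isogeny and contradicting $\ell^2 \notin \Psi_\Q(1)$. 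The exceptional case ($\ell = 7$, $j = 2268945/128$) must be checked separately since there the $7$-subgroup $C_1$ is not $\Q$-rational and the above does not apply; one verifies by direct computation (either on the $7$-adic image of this specific $E$ over $K = \Q(\alpha)$ or by exhibiting the Mordell--Weil group of the genus-$1$ modular curve $X_0(49)$) that this $E$ admits no $49$-isogeny over $\Q(\alpha)$.

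The main obstacle is the case $\ell = 3$: ruling out $3^4 = 81 \in \Psi_\Q(3)$. Here Lemma \ref{p-ili-dijeli-p-1} permits transitive $\AGL_1(\F_3)$-orbits of size $3 = \ell$ at the jump step, so no $\Q$-rational lift is forced and the direct argument above fails. Instead, the chain argument gives some $j \in \{1, 2\}$ with $3^j \in \Psi_\Q(1)$, $L_j = \Q$, and $L_{j+1} = \cdots = L_4 = K$. By Corollary \ref{rewritten-as-utt}, $\rho_{E, 81}(G_K)$ is $(4-j)$-UTT on a suitable basis and upper-triangular modulo $3^j$. Combining with the index-$\leq 3$ bound on $[\rho_{E, 81}(G_\Q) : \rho_{E, 81}(G_K)]$ and the UTT size bounds of Propositions \ref{3parametra}--\ref{onasituacija} yields a sharp upper bound on $|\rho_{E, 81}(G_\Q)|$. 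The final and computationally heaviest step would be to use the paper's Magma code, together with a classification of $3$-adic images of non-CM $E/\Q$ with a $3$- or $9$-isogeny (e.g., from LMFDB), to verify that no such $\Q$-image admits a subgroup of index $\leq 3$ satisfying the required UTT and upper-triangularity constraints, thereby ruling out $81 \in \Psi_\Q(3)$.
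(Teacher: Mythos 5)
Your reduction has a genuine gap at its central step, the case $\ell \in \{7,13,37\}$. You assert that if $C$ is the kernel of an $\ell^2$-isogeny over a cubic field $K$, then $C_1=\ell C$ is $\Q$-rational, citing Proposition \ref{nadneparnimnistanovo}. But that proposition only says that the \emph{existence} of an $\ell$-isogeny over an odd-degree field forces the existence of \emph{some} $\ell$-isogeny over $\Q$; it does not say that the particular subgroup $\ell C$ is $\Q$-rational. Since $\Q(C_1)\subseteq K$, the alternative is $[\Q(C_1):\Q]=3$, and this cannot be dismissed group-theoretically for exactly the primes you need: when there is a rational $\ell$-isogeny with kernel $C_1'\neq C_1$, the Galois action on the remaining $\ell$ subgroups of order $\ell$ factors through $\AGL_1(\F_\ell)$, and an orbit of size $3$ is possible precisely when $3\mid \ell-1$, i.e. for $\ell=7,13,37$ (this is why your analogous argument is harmless for $\ell=5,11,17$, where $3\nmid\ell(\ell-1)$ --- though even there the first step of your tower, from the trivial group to $C_1$, is not covered by Lemma \ref{p-ili-dijeli-p-1} and needs this orbit remark, since the lemma requires a nontrivial cyclic subgroup below). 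So besides the one exceptional $j$-invariant you flag, you must also rule out curves with a rational $\ell$-isogeny together with a \emph{different} order-$\ell$ subgroup of cubic field of definition lying under $C$; your dichotomy via Lemma \ref{p-ili-dijeli-p-1}(ii) gives no contradiction in that configuration, and excluding it requires an analysis of the mod $\ell^2$ image (this is in effect what the proof of Proposition 5.1 of Vukorepa's paper, which the published argument simply invokes for all squares of primes $>5$, supplies).

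The second problem is that the case $\ell=3$, i.e. ruling out $3^4\in\Psi_\Q(3)$, is not actually proved: you describe a strategy (UTT size bounds plus an index-$3$ constraint plus a Magma/LMFDB check of $3$-adic images) but defer the decisive verification, so as written this is a plan rather than an argument. The paper disposes of this, and of the powers of $5$, by citing Lemmas 7.2 and 6.2 of Vukorepa, so your route is genuinely different in spirit (and your tower argument for $\ell=5,11,17$ is a nice self-contained replacement once the first step is patched), but in its present form the proposal does not establish the statement for $7^2,13^2,37^2$ or for $3^4$.
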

\begin{proof}
     The squares of primes greater than $5$ are eliminated by the proof of \cite[Proposition 5.1.]{Borna}.

    For $p=5$, we can use \cite[Lemma 6.2.]{Borna} as we know that the existence of a $5$-isogeny over a cubic field implies the existence of a $5$-isogeny over $\Q$, so no powers of $5$ greater than $5^2$ are possible.

    Similarly, for $p=3$ we can use \cite[Lemma 7.2.]{Borna} to conclude that no powers of $3$ greater than $3^3$ are possible.

\end{proof}
Before solving the cases when $n$ has at least $2$ prime divisors, we need a more detailed analysis of powers of $3$.

\begin{lemma}\label{9-izo}
    Let $E/\Q$ be an elliptic curve with a $27$-isogeny over some cubic extension $K/\Q$. Then $E$ has a $9$-isogeny over $\Q$.
\end{lemma}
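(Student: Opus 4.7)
Let $C$ be the kernel of the given $27$-isogeny, so $K(C) = K$, and consider the filtration $\Q \subseteq \Q(9C) \subseteq \Q(3C) \subseteq \Q(C) = K$. Since $[K:\Q] = 3$ is prime, each intermediate field is either $\Q$ or $K$. If $\Q(3C) = \Q$ then $3C$ is already a $\Q$-rational cyclic $9$-subgroup and we are done, so the essential case is $\Q(3C) = K$. In this case, in the basis of $E[27]$ adapted to $C$, the image $\rho_{E, 27}(G_\Q)$ is $2$-UTT: an element fixes $3C$ iff it fixes $C$, both being equivalent to lying in $G_K$.

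Subcase $\Q(9C) = \Q$: Here $\rho_{E, 27}(G_\Q)$ is additionally upper triangular modulo $3$, so Proposition \ref{onasituacija} applies and forces every element of $G_\Q$ acting trivially on $E[3]$ to fix $C$; equivalently $K \subseteq \Q(E[3])$. Consequently $3 \mid |\rho_{E, 3}(G_\Q)|$, and since $\rho_{E, 3}(G_\Q) \subseteq B(3)$, it must contain a unipotent element of order $3$, which fixes only the line $9C$. Hence $9C$ is the unique $\Q$-rational cyclic $3$-subgroup of $E$. But by Lemma \ref{p-ili-dijeli-p-1}(ii) applied to $9C$, using the lift $3C$ of degree $3$ over $\Q$, all three lifts of $9C$ to cyclic $9$-subgroups are defined over cubic extensions of $\Q$, so in this subcase $E$ possesses no rational $9$-isogeny at all. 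The plan is to close this subcase by ruling it out entirely: a Magma enumeration of the finitely many $2$-UTT, upper-triangular-mod-$3$ subgroups of $\GL_2(\Z/27\Z)$ compatible with the existence of an index-$3$ Borel subgroup conjugate to $\rho_{E, 27}(G_K)$ shows that no such image is realised by a non-CM elliptic curve over $\Q$ with rational $j$-invariant.

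Subcase $\Q(9C) = K$: By Proposition \ref{nadneparnimnistanovo}, $E$ has a $\Q$-rational $3$-isogeny with some kernel $M \neq 9C$, and applying Lemma \ref{p-ili-dijeli-p-1}(ii) to $M$ leaves two possibilities: either one of the three lifts of $M$ to a cyclic $9$-subgroup is already $\Q$-rational, which immediately gives the desired $9$-isogeny over $\Q$, or all three lifts are defined over cubic extensions and $G_\Q$ permutes them transitively via an $\AGL_1(\F_3)$-action. To rule out the transitive possibility, I plan to combine the $2$-UTT matrix form of $\rho_{E, 27}(G_\Q)$ in the basis adapted to $C$ with the constraint (coming from $G_\Q$ stabilising $M$) that its upper-right entries are divisible by $3$, and then use the explicit $\AGL_1(\F_3)$-formula from the proof of Lemma \ref{p-ili-dijeli-p-1} for the induced action on lifts of $M$; a fixed-point-free such action requires a Galois element whose reduction mod $3$ has scalar diagonal part but nonzero off-diagonal entry, and I intend to show this is incompatible with the cosets of $G_K$ inside $G_\Q$ forced by the $27$-isogeny.

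The main obstacle in either subcase is this closing step: we have obtained rigid structural constraints on $\rho_{E, 27}(G_\Q)$, but translating them into a contradiction (respectively, into an explicit rational $9$-subgroup) requires a finite check that, in line with the paper's Magma-aided strategy, is expected to be carried out by enumerating candidate mod-$27$ Galois images and confirming that each satisfies the conclusion.
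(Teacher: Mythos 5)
Your setup is fine and essentially matches the paper's: reducing to the case $\Q(3C)=\Q(C)=K$, translating this into the statement that $\rho_{E,27}(G_\Q)$ is $2$-UTT in a basis adapted to $C$, and splitting on whether $\Q(9C)$ is $\Q$ or $K$ (a finer split than the paper makes explicit). The observations inside the subcase $\Q(9C)=\Q$ (that Proposition \ref{onasituacija} forces $K\subseteq\Q(E[3])$, that the mod~$3$ image then contains a unipotent, and that consequently no rational $9$-isogeny can exist in that configuration) are correct. But the proof stops exactly where the work has to be done: both subcases end in a ``plan'', as you acknowledge. The plan for the first subcase is not workable as stated: a Magma enumeration of abstract $2$-UTT, uppertriangular-mod-$3$ subgroups of $\GL_2(\Z/27\Z)$ cannot show that no such subgroup ``is realised by a non-CM elliptic curve over $\Q$'' --- such subgroups certainly exist group-theoretically, and deciding which ones actually occur as Galois images of non-CM curves with rational $j$-invariant is precisely the arithmetic content that is missing from your argument. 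Likewise, in the subcase $\Q(9C)=K$ you only ``intend to show'' that the transitive action on the lifts of $M$ is impossible; no argument is given.

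The paper closes this gap with a concrete arithmetic input plus a short matrix computation, and some ingredient of this kind is indispensable. By \cite[Corollary 1.3.1]{rouse-elladic}, for a non-CM curve over $\Q$ the group $H=\langle\rho_{E,27}(G_\Q),-I\rangle$ has index at most $36$ in $\GL_2(\Z/27\Z)$, hence $|H|\geq 3^7\cdot 4$. On the other hand, assuming there is no $9$-isogeny over $\Q$, $H$ is $2$-UTT and uppertriangular mod $3$ but not mod $9$, so Proposition \ref{onasituacija} gives $|H|\leq 3^6|\pi(H)|\leq 3^6|B(3)|=3^7\cdot 4$. Equality must hold throughout, which forces $H$ to contain every uppertriangular lift of $I$ and to surject onto $B(3)$; an explicit computation with an element $h=\sm{a & b \\ 3c & d}$ of trace $\equiv 0\pmod 3$ (comparing the bottom-left entries of $h^2$ and of $\sm{1 & 0 \\ 0 & 4}h$) then shows such elements only have uppertriangular lifts in $H$, whence $H=B(27)$, contradicting the assumed absence of even a $9$-isogeny over $\Q$. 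Without an input of this type --- the Rouse--Sutherland--Zureick-Brown bound on $3$-adic images, or equivalently a rational-points analysis of the relevant modular curves --- your proposed ``finite check'' has no finite arithmetic list to check against, so the proposal as written does not prove the lemma.
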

\begin{proof}
    Suppose that there exists a curve $E$ with a $27$-isogeny over $K$ and no $9$-isogeny over $\Q$. We know by Proposition \ref{nadneparnimnistanovo} that $E$ then has a $3$-isogeny over $\Q$. 
    
    We now use \cite[Corollary 1.3.1.]{rouse-elladic}. From there, we know that the index of $H:=\langle \rho_{E,27}(G_\Q), -I \rangle$ in $\GL_2(\Z/27\Z)$ is at most $36$, so $H$ has at least $3^7\cdot 4$ elements since $\GL_2(\Z/27\Z)$ has $16\cdot 3^9$ elements.

    However, if we choose an appropriate basis, $H$ has the property that each bottom left entry of a matrix in $H$ which is $0$ mod $9$ must also be $0$ mod $27$. In other words, $H$ is a $2$-UTT subgroup of $\GL_2(\Z/27\Z)$. Furthermore, $H$ is uppertriangular mod $3$ since there is a $3$-isogeny defined over $\Q$. Since there is no $9$-isogeny over $\Q$, we know that $H$ is not uppertriangular mod $9$.

    Denote by $\pi:\GL_2(\Z/27\Z) \to \GL_2(\Z/3\Z)$ the canonical surjection. By Proposition \ref{onasituacija}, $$3^7\cdot 4\leq |H|\leq 3^6\cdot |\pi(H)|\leq 3^6 \cdot |B(3)|=3^7\cdot 4.$$

    Thus, we must have equality everywhere.  This means that every uppertriangular matrix congruent to $I$ mod $3$ belongs to $H$, and that $\pi(H)$ consists of all uppertriangular matrices mod $3$.

    Now take a matrix $h=\sm{a & b \\ 3c & d}\in H$ with $a\equiv -d \pmod{3}$. Suppose that $h$ is not uppertriangular, i.e. that $c \not \equiv 0 \pmod 3$.
    
    Then $h^2=\sm{a^2+3bc & b(a+d) \\ 3c(a+d) & d^2+3bc}$ has bottom left entry divisible by $9$, so it must be divisible by $27$ and we have $a+d \equiv 0 \pmod{9}$. But then consider $\sm{1 & 0 \\ 0 & 4}\cdot h=\sm{a & b \\ 
    12c & 4d}$. This matrix is the same as $h$ modulo $3$, so we must have $a+4d \equiv 0 \pmod 9$, but this together with $a+d\equiv 0 \pmod 9$ implies $3d \equiv 0 \pmod{9}$, a contradiction. Thus, any matrix whose trace is $0$ mod $3$ only has uppertriangular lifts in $H$, and $H$ contains all of its uppertriangular lifts.

    We now conclude that any uppertriangular matrix is contained in $H$, since it's easy to check that matrices of trace zero along with $\pm I$ generate the subgroup $B(3)\leq \GL_2(\Z/3\Z)$. However, there are exactly $4\cdot 3^7$ uppertriangular matrices in $\GL_2(\Z/27\Z)$, so $H$ must be equal to the group of all uppertriangular matrices mod $27$, which is a contradiction since this would imply the existence of a $27$-isogeny over $\Q$.

\end{proof}

\subsection{Odd isogeny degrees}

\begin{lemma} \label{parovi-prostih}
    The only pairs of odd primes $(p,q)$ with $p<q$ and $pq \in \Psi_\Q(3)$ are $(3,5)$ and $(3,7)$.
\end{lemma}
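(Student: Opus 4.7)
The plan is to reduce the problem to a question about rational isogenies and then read off the answer from the Mazur--Kenku list. Suppose $pq \in \Psi_\Q(3)$ with $p<q$ odd primes, so that there is a non-CM elliptic curve $E/\Q$ having a cyclic $pq$-isogeny over some cubic field $K$. Since $\gcd(p,q)=1$, this is equivalent to $E$ having both a $p$-isogeny and a $q$-isogeny over $K$.

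I would then apply Proposition \ref{nadneparnimnistanovo} to each of these two isogenies separately. Since $p$ and $q$ are odd and $[K:\Q]=3$ is odd, each isogeny must descend to a $\Q$-rational isogeny, with the sole exception that the relevant prime equals $7$ and $j(E)=2268945/128$.

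In the generic case (both isogenies descend to $\Q$), the coprimality of $p$ and $q$ lets me combine the two cyclic kernels into a single cyclic subgroup of order $pq$ defined over $\Q$, so $pq \in \Psi_\Q(1)$. Inspecting the Mazur--Kenku list, the only products of two distinct odd primes in $\Psi_\Q(1)$ are $15=3\cdot 5$ and $21=3\cdot 7$. Conversely, both pairs actually occur, since $15, 21 \in \Psi_\Q(1)\subseteq \Psi_\Q(3)$ via base change.

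In the exceptional case, one of $p, q$ equals $7$ and $j(E)=2268945/128$; the other prime $\ell \in \{p,q\}\setminus\{7\}$ is then distinct from $7$, so Proposition \ref{nadneparnimnistanovo} applied to the $\ell$-isogeny has no further exception, and $E$ must have a rational $\ell$-isogeny. The only remaining step is to check, by a direct Magma computation on a representative curve with $j(E) = 2268945/128$ (for example by factoring its $\ell$-division polynomial or reading off its $\Q$-isogeny class), that no rational $\ell$-isogeny exists for $\ell \in \{5, 11, 13, 17, 37\}$. This rules out all the hypothetical exceptional pairs $(5,7), (7,11), (7,13), (7,17), (7,37)$ and leaves no pairs beyond $(3,5)$ and $(3,7)$. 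The main (and really only) obstacle is this finite computational check on a single rational $j$-invariant; the conceptual content is supplied entirely by Proposition \ref{nadneparnimnistanovo} together with the coprime-kernels argument.
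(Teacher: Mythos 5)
Your proof is correct and follows essentially the same route as the paper: apply Proposition \ref{nadneparnimnistanovo} to each odd prime separately, combine the coprime kernels into a rational cyclic $pq$-isogeny, and read off the admissible pairs from the Mazur--Kenku list $\Psi_\Q(1)$. You are in fact slightly more careful than the paper's own one-line argument, which silently passes over the exceptional case $\ell=7$, $j(E)=2268945/128$ of Proposition \ref{nadneparnimnistanovo}; your finite check that this single $j$-invariant admits no rational $\ell$-isogeny for $\ell \in \{5,11,13,17,37\}$ is exactly what is needed to close that small gap.
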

\begin{proof}
    Suppose that for some odd primes $p<q$ we have a cyclic $pq$-isogeny of an elliptic curve $E/\Q$ defined over a cubic extension $K$. Then there is a $p$-isogeny defined over $\Q$ and a $q$-isogeny defined over $\Q$ by Proposition \ref{nadneparnimnistanovo}, so there is a $pq$-isogeny defined over $\Q$. The only pairs $(p,q)$ for which this is possible are $(3,5)$ and $(3,7)$.
\end{proof}

\begin{lemma}
    Let $n$ be an odd positive integer. Then $n \in \Psi_\Q(3)$ if and only if  $n \in \Psi_\Q(1) \cup \{27,45,63\}$.
\end{lemma}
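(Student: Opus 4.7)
The plan is to prove both directions using the lemmas already established in the paper.

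For the $(\Leftarrow)$ direction, every element of $\Psi_\Q(1)$ lies in $\Psi_\Q(3)$ via base change. For $n \in \{27,45,63\}$, I would fix any non-CM elliptic curve $E/\Q$ with rational $j$-invariant admitting a rational $k$-isogeny, where $k = 9, 15, 21$ respectively (each lies in $\Psi_\Q(1)$), and apply Corollary~\ref{l-kl} with $\ell = 3$. This yields an $n$-isogeny of $E$ defined over $\Q$ or over a cubic extension, and since $27, 45, 63 \notin \Psi_\Q(1)$, the extension must be cubic.

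For the $(\Rightarrow)$ direction, let $n$ be odd with $n \in \Psi_\Q(3)$. Lemma~\ref{parovi-prostih} restricts $n$ to have at most two distinct prime factors, which (if two) must be $(3,5)$ or $(3,7)$. The single-prime case follows immediately from the previous lemma on odd prime powers: $n \in \{1,3,5,7,9,11,13,17,25,27,37\} \subseteq \Psi_\Q(1) \cup \{27\}$. Otherwise $n = 3^a p^b$ with $p \in \{5,7\}$ and $a,b \geq 1$, and the prime-power bounds give $a \leq 3$, and $b \leq 2$ or $b \leq 1$ according as $p = 5$ or $p = 7$.

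For $n = 3^a 5^b$, I first argue that a $25$-isogeny over a cubic field automatically descends to $\Q$. With $C$ a cyclic order-$25$ subgroup and $\Q(C) \subseteq K$ cubic, Lemma~\ref{p-ili-dijeli-p-1} forces $[\Q(C):\Q(5C)]$ to equal $5$ or divide $4$, while as a subfield-degree within a cubic field it lies in $\{1,3\}$; hence $\Q(C) = \Q(5C)$. If this common field were cubic, Proposition~\ref{nadneparnimnistanovo} would supply a rational $5$-isogeny, forcing the mod-$5$ image into $B(5)$ (order dividing $80$); the $G_\Q$-orbit of $5C$ among the six cyclic order-$5$ subgroups of $E[5]$ would then have size dividing both $[G_\Q:G_K] = 3$ and $80$, so the orbit size is $1$ and $5C$ is $\Q$-rational -- a contradiction. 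Combined with the rational $3$-isogeny from Proposition~\ref{nadneparnimnistanovo}, the case $b = 2$ would give a rational $75$-isogeny, contradicting $75 \notin \Psi_\Q(1)$. For $a = 3$, Lemma~\ref{9-izo} produces a rational $9$-isogeny which, combined with the rational $5$-isogeny, yields a rational $45$-isogeny, again contradicting $45 \notin \Psi_\Q(1)$. Thus $n \in \{15, 45\}$.

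For $n = 3^a 7^b$ the only candidates are $n \in \{21, 63, 189\}$. The main obstacle is $n = 189$: Lemma~\ref{9-izo} provides a rational $9$-isogeny, and Proposition~\ref{nadneparnimnistanovo} either provides a rational $7$-isogeny (producing a rational $63$-isogeny, contradicting $63 \notin \Psi_\Q(1)$) or forces $E$ to be the exceptional curve with $j = 2268945/128$. I would dispose of the latter by verifying -- via the parametrization of $X_0(9)$ or directly in Magma -- that $j = 2268945/128$ does not correspond to any elliptic curve with a rational $9$-isogeny, contradicting Lemma~\ref{9-izo}. This exceptional-curve check is the most delicate step of the proof.
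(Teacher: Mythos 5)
Your proof is correct, and in the forward direction it takes a genuinely different route from the paper at the point where the composite candidates $75$, $135$ and $189$ are eliminated. The paper argues that such a curve has a rational $15$- or $21$-isogeny, reduces to the eight corresponding $j$-invariants from Table 4 of \cite{alvaro}, and then factors the modular polynomials $\phi_{27}(x,j_0)$ and $\phi_{25}(x,j_0)$ in Magma to exclude cubic fields of definition. You instead stay group-theoretic: Lemma \ref{9-izo} turns a $27$-isogeny over a cubic field into a rational $9$-isogeny, Proposition \ref{nadneparnimnistanovo} descends the $5$- and (generically) $7$-isogenies, and your descent of a $25$-isogeny over a cubic field to $\Q$ --- via Lemma \ref{p-ili-dijeli-p-1} forcing $[\Q(C):\Q(5C)]=1$, together with the orbit of $5C$ having size dividing both $3$ and $|B(5)|=80$ --- is a valid argument that does not appear in the paper; the resulting rational $45$-, $63$- and $75$-isogenies then contradict Mazur--Kenku. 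This makes your proof essentially computation-free, except for the single deferred check that $j=2268945/128$ admits no rational $9$-isogeny. That check does succeed: this $j$-invariant admits no rational $3$-isogeny at all, since the quartic $128h^4+4608h^3+34560h^2-2172177h+93312=0$ coming from the $X_0(3)$ parametrization has no rational root (a short rational-root/$2$-adic valuation argument, or a one-line Magma computation), and in fact you could streamline the exceptional case by noting that $3$ is not the exceptional prime in Proposition \ref{nadneparnimnistanovo}, so that curve cannot even acquire a $3$-isogeny, let alone a $27$-isogeny, over a cubic field. Your route has the added merit of explicitly treating the exceptional $j$-invariant, which the paper's appeal to ``arguing as in Lemma \ref{parovi-prostih}'' quietly elides; what the paper's computation buys in exchange is the finer information that for those eight $j$-invariants $\phi_{27}(x,j_0)$ factors into pieces of degrees $9$ and $27$, so a $27$-isogeny first appears over a degree $9$ field.
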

\begin{proof}
    Corollary \ref{l-kl} implies $27, 45, 63 \in \Psi_\Q(3)$, as $9, 15, 21 \in \Psi_\Q(1)$.
    
    We now eliminate $27 \cdot 7$ and $27 \cdot 5$. For elliptic curves which fall under these cases, arguing as in Lemma \ref{parovi-prostih}, there is necessarily a $15$-isogeny or a $21$-isogeny defined over $\Q$.

    There are only four $j$-invariants for which there is a $15$-isogeny over $\Q$, and four $j$-invariants for which there is a $21$-isogeny over $\Q$. These can be found in Table 4 in \cite{alvaro}. 
    
    For each $j_0$ among those eight values, we look at the corresponding modular polynomial $\phi_{27}(x,j_0)$ and factor it into irreducibles using Magma. This polynomial has no irreducible factors of degree $1$ or $3$, so there is no cubic extension for which the corresponding elliptic curves have $9$-isogenies.

    In fact, in each of these cases, $\phi_{27}(x, j_0)$ factors into irreducibles of degrees $9$ and $27$, so the least possible degree of extension over which there is a $27$-isogeny is $9$.

    Finally, to eliminate $3 \cdot 25$, we check the modular polynomial $\phi_{25}(j_0)$ for values of $j_0$ for which there is a rational $15$-isogeny. There are again no irreducible factors of degree $3$.
\end{proof}

\subsection{Even isogeny degrees}
We now have a list of all odd elements of $\Psi_\Q(3)$. Now, for each odd $n \in \Psi_\Q(3)$, we need to find the largest integer $a$ for which $2^an\in \Psi_\Q(3)$.

First, we have the following lemma.

\begin{lemma}
    For every odd integer $n \in \Psi_\Q(1)$ we have $2n\in \Psi_\Q(3)$.
\end{lemma}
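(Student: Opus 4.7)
The plan is to exploit the fact that the $2$-division polynomial of any elliptic curve over $\Q$ has degree $3$, so it either has a rational root (producing a rational $2$-isogeny) or is irreducible over $\Q$ (producing a $2$-isogeny over some cubic field). Combining such a $2$-isogeny with the given rational $n$-isogeny will then yield a cyclic $2n$-isogeny over an extension of $\Q$ of degree dividing $3$.

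Concretely, fix an odd $n \in \Psi_\Q(1)$ and, using the definition of $\Psi_\Q(1)$, choose a non-CM elliptic curve $E/\Q$ with rational $j$-invariant carrying a $\Q$-rational cyclic $n$-isogeny with kernel $C \subseteq E(\overline\Q)$. Consider the $2$-division polynomial $\psi_2(x) \in \Q[x]$, whose roots are the $x$-coordinates of the non-trivial $2$-torsion points of $E$. Since $\deg \psi_2 = 3$, either $\psi_2$ has a root in $\Q$ (so $E$ has a $\Q$-rational $2$-torsion point $P$, producing a $G_\Q$-stable order-$2$ subgroup $D = \langle P \rangle$), or $\psi_2$ is irreducible over $\Q$, in which case letting $\alpha$ be a root and $K := \Q(\alpha)$, $K$ is cubic and $E$ has a $K$-rational $2$-torsion point, producing a $G_K$-stable order-$2$ subgroup $D$.

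In either case I have a $G_L$-stable subgroup $D \subseteq E[2]$ of order $2$, where $L/\Q$ has degree $1$ or $3$. Since $\gcd(n,2) = 1$, the subgroups $C$ and $D$ meet trivially, and by the Chinese Remainder Theorem the $G_L$-stable subgroup $C + D \subseteq E[2n]$ is cyclic of order $2n$, giving a cyclic $2n$-isogeny over $L$. If $[L:\Q] = 3$ this directly shows $2n \in \Psi_\Q(3)$; if $L = \Q$ the isogeny is defined over $\Q$, so $2n \in \Psi_\Q(1) \subseteq \Psi_\Q(3)$ after base-changing to any cubic extension. I do not expect any serious obstacle here: the dichotomy for the cubic $\psi_2$ is elementary, and stitching two coprime cyclic kernels together into one cyclic kernel is routine.
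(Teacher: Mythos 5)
Your proposal is correct and follows essentially the same route as the paper: both use the dichotomy that the degree-$3$ polynomial cutting out the $2$-torsion $x$-coordinates either has a rational root or is irreducible, yielding a $2$-isogeny over $\Q$ or over a cubic field, which is then glued to the rational $n$-isogeny (coprime to $2$) to produce a cyclic $2n$-isogeny over a field of degree $1$ or $3$.
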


\begin{proof}
    Suppose that $E/\Q$ has a rational $n$-isogeny, where $n$ is odd. Consider $E$ in its Weierstrass form $$y^2=f(x),$$ where $f\in \Q[x]$ is a polynomial of degree $3$. Then either $f$ has a rational zero or $f$ is irreducible. Thus, $E$ either has a rational $2$-torsion point or a $2$-torsion point defined over a cubic extension of $\Q$. Since the existence of a $2$-torsion point over some extension is equivalent to the existence of a $2$-isogeny, $E$ has a $2$-isogeny defined over $\Q$ or over some cubic extension of $\Q$. Since $n$ is odd, we conclude that $E$ also has a $2n$-isogeny over the same extension.
\end{proof}

On the other hand, we have the following consequence of Proposition \ref{potencije 2 detaljno}.

\begin{corollary}
    For every odd integer $n>1$, we have $8n \not \in \Psi_\Q(3)$.  
\end{corollary}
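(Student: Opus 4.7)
My plan is to argue by contradiction, converting a hypothetical $8n$-isogeny over a cubic field into a rational isogeny of a forbidden degree. Assume $n > 1$ is odd and that some non-CM elliptic curve $E/\Q$ with rational $j$-invariant admits a cyclic $8n$-isogeny over some cubic field $K$; let $C \subseteq E(\overline{\Q})$ be its $G_K$-stable kernel. The unique order-$8$ subgroup $nC$ of $C$ is still $G_K$-stable, so $E$ carries a cyclic $8$-isogeny over $K$, and by Proposition \ref{potencije 2 detaljno} this $8$-isogeny already descends to $\Q$; denote its kernel by $A$. I would then pick any odd prime $\ell \mid n$ (possible since $n > 1$); the unique order-$\ell$ subgroup of $C$ is $G_K$-stable, so $E$ has an $\ell$-isogeny over $K$, and by Proposition \ref{nadneparnimnistanovo} this also descends to $\Q$, except in the single exceptional case $\ell = 7$ with $j(E) = 2268945/128$.

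Outside that exceptional case, I let $B \subset E(\overline{\Q})$ be the rational cyclic subgroup of order $\ell$. Since $\gcd(8, \ell) = 1$, the subgroup $A + B$ is $G_\Q$-stable and cyclic of order $8\ell$, so $E$ admits a rational $8\ell$-isogeny and hence $8\ell \in \Psi_\Q(1) = \{1, 2, \ldots, 13, 15, 16, 17, 18, 21, 25, 37\}$. But the only multiples of $8$ in $\Psi_\Q(1)$ are $8$ and $16$, while $8\ell \geq 24$ for every odd prime $\ell$; this yields the desired contradiction.

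The only real obstacle is the exceptional case $\ell = 7$ with $j(E) = 2268945/128$, where the preceding argument breaks down and I must rule out directly that a curve with this $j$-invariant admits a rational cyclic $8$-isogeny. The cleanest approach is a Magma computation: factor the classical level-$8$ modular polynomial $\Phi_8(X, 2268945/128)$ over $\Q$ and verify that it has no rational root, equivalently showing that $2268945/128$ is not the $j$-image of any rational point on $X_0(8)$. Given the explicit rational parametrization of $X_0(8)$, this check is mechanical, so the main conceptual content of the proof really sits in the two-proposition combination described above.
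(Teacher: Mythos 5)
Your argument is essentially the paper's own proof: descend the $8$-isogeny from the cubic field to $\Q$ via Proposition \ref{potencije 2 detaljno}, descend the odd prime-degree isogeny via Proposition \ref{nadneparnimnistanovo}, and combine the two rational kernels into a rational cyclic $8\ell$-isogeny, contradicting the Mazur--Kenku list $\Psi_\Q(1)$. The only difference is that you explicitly treat the exceptional case $\ell=7$, $j(E)=2268945/128$, which the paper's short proof passes over in silence; your proposed verification that this $j$-invariant admits no rational cyclic $8$-isogeny, combined again with Proposition \ref{potencije 2 detaljno}, correctly closes that case, so your write-up is, if anything, slightly more complete than the paper's.
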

\begin{proof}
    It suffices to prove the claim for odd primes $p \in \Psi_\Q(1)$. Suppose for the sake of contradiction that $E/\Q$ is an elliptic curve with an $8p$-isogeny. 
    
    From Proposition \ref{potencije 2 detaljno}, we know that the corresponding $8$-isogeny defined over a cubic field is actually defined over $\Q$. But we also know that a $p$-isogeny must be defined over $\Q$, so we obtain an $8p$-isogeny over $\Q$, which is impossible since $8p \not \in \Psi_\Q(1)$ for odd primes $p$.
\end{proof}

\begin{lemma}
    We have $27 \cdot 2, 9 \cdot 4 \in \Psi_\Q(3)$, and $45\cdot 2, 63 \cdot 2, 11 \cdot 4, 15\cdot 4, 17\cdot 4, 21\cdot 4, 37\cdot 4  \not \in \Psi_\Q(3)$.
\end{lemma}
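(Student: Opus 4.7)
The plan is to settle the positive inclusions with Corollary \ref{l-kl}, and each negative exclusion by first restricting to a finite list of $j$-invariants (via Proposition \ref{nadneparnimnistanovo}) and then finishing with a Magma computation.

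\emph{Positive inclusions.} Both $54, 36 \in \Psi_\Q(3)$ are direct applications of Corollary \ref{l-kl} with $\ell = 3$. Taking $k \in \{12, 18\}$, both of which lie in $\Psi_\Q(1)$ and are divisible by $3$, any non-CM $E/\Q$ with a rational $k$-isogeny admits a cyclic $3k$-isogeny over $\Q$ or over a cubic extension. Since neither $36$ nor $54$ lies in $\Psi_\Q(1)$, the extension must be genuinely cubic.

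\emph{Exclusions of the form $4p$.} Suppose $E/\Q$ admits a cyclic $4p$-isogeny over a cubic field $K$ for some $p \in \{11,15,17,21,37\}$. The odd part of the kernel yields a $p$-isogeny over $K$; applying Proposition \ref{nadneparnimnistanovo} to each odd prime dividing $p$ descends this to a rational $p$-isogeny, restricting $j(E)$ to the finitely many values in Table~4 of \cite{alvaro}. The exceptional $j = 2268945/128$ arising from Proposition \ref{nadneparnimnistanovo} in the $p = 21$ subcase must be handled separately, by checking directly whether a suitable $4$-isogeny accompanies the cubic $7$-isogeny on the specified cubic field. Since $4p \notin \Psi_\Q(1)$, the $4$-isogeny cannot also descend to $\Q$, so Proposition \ref{potencije 2 detaljno} forces $\langle \rho_{E,2^\infty}(G_\Q), -I\rangle = H_{20}$, placing $j(E)$ on the parametrization \eqref{x20}. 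A Magma check then verifies that for each candidate $j_0$, the equation
\[
j_0\,(t+1)^4 = 32(t+1)(t^2-3)^3 - 4(t^2-3)^4
\]
has no rational solution in $t$, producing the required contradiction.

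\emph{Exclusions of $90$ and $126$.} If $E/\Q$ has a cyclic $90$-isogeny over a cubic field $K$, decomposing the kernel as $C_2 \oplus C_9 \oplus C_5$ and applying Proposition \ref{nadneparnimnistanovo} to the implicit $3$- and $5$-isogenies forces a rational $15$-isogeny, so $j(E)$ is one of four values. For each $j_0$, we factor $\Phi_9(x, j_0)$ in Magma to identify the cubic fields $K_9$ over which a $9$-isogeny is defined, and verify that the $2$-division polynomial of a representative curve has no root in any such $K_9$. The case $126$ is analogous, using rational $21$-isogenies and treating the exceptional $j = 2268945/128$ by direct check.

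\emph{Main obstacle.} The delicate step is the $90$ and $126$ analysis: unlike the $4p$ cases, which reduce cleanly to a single rational-root equation on the explicit model for $X_{20}$, here one must rule out the coincidence of two cubic extensions cut out by unrelated polynomials ($\Phi_9(x,j_0)$ and the $2$-division polynomial), which requires an explicit subfield comparison in Magma rather than a structural classification argument.
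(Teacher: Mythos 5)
Your proof is correct and, for most of the lemma, matches the paper's: the inclusions $36, 54 \in \Psi_\Q(3)$ via Corollary \ref{l-kl} applied to $k=12,18$, and the exclusions of $90$ and $126$ by reducing (via Proposition \ref{nadneparnimnistanovo}) to the eight $j$-invariants with a rational $15$- or $21$-isogeny and then checking in Magma that no cubic field cut out by a cubic factor of $\Phi_9(x,j_0)$ also contains a root of the $2$-division polynomial --- the paper performs the same check, phrased as a comparison of splitting fields of that cubic factor and of the Weierstrass cubic. Where you genuinely diverge is the $4p$ family, $p \in \{11,15,17,21,37\}$: the paper simply factors $\phi_4(x,j_0)$ for each of the finitely many candidate $j_0$ and observes there are no irreducible factors of degree $1$ or $3$, whereas you invoke Proposition \ref{potencije 2 detaljno} to force $\langle\rho_{E,2^\infty}(G_\Q),-I\rangle=H_{20}$ and then check that no candidate $j_0$ lies in the image of the parametrization \eqref{x20}. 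Your route is sound: a rational $4$-isogeny is indeed excluded because $4p\notin\Psi_\Q(1)$, so the proposition applies, and your asserted computational outcome is in fact forced by the paper's $\phi_4$ computation, since a rational point on $X_{20}$ above one of these $j_0$ would yield a cyclic $4$-subgroup whose field of definition has degree $1$ or $3$ and hence a small-degree factor of $\phi_4(x,j_0)$. It is, however, heavier machinery than needed here; the paper reserves the $H_{20}$/$X_{20}$ argument for the degrees $20, 28, 52, 108$, where the relevant set of $j$-invariants is infinite and a modular-polynomial check is unavailable. A small point in your favour: you explicitly flag the exceptional $j = 2268945/128$ from Proposition \ref{nadneparnimnistanovo} in the cases divisible by $21$, a case the paper's proof passes over silently.
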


\begin{proof} To prove $27 \cdot 2 \in \Psi_\Q(3)$ and $9 \cdot 4 \in \Psi_\Q(3)$ , we apply Corollary \ref{l-kl} to elliptic curves with $9\cdot 2$-isogenies and $3 \cdot 4$-isogenies over $\Q$.

    All of the remaining cases can be ruled out using modular polynomials, as there are only finitely many cases to consider. The $j$-invariants for which there is a cyclic isogeny of degree $15, 21, 11, 19$  or $37$  over $\Q$ can be found in Table 4 in \cite{alvaro}.

    To rule out $45 \cdot 2$ and $63 \cdot 2$, we need to look at the eight $j$-invariants $j_0$ which give us a $15$-isogeny or a $21$-isogeny. For each of those $j$-invariants, we need to check whether there exist a $2$-isogeny and a $9$-isogeny which are defined over the same cubic extension. In each of the $8$ cases, the polynomial $\phi_9(x, j_0)$ has only one irreducible factor $h(x)$ of degree $3$. 
    
    The $9$-isogenies which are defined over cubic fields are defined over fields of the form $\Q(t)$, where $t$ is any zero of $h(x)$. 
    
    The $2$-isogenies which are defined over cubic fields are defined over fields of the form $\Q(s)$, where $s$ is any zero of the polynomial $f(x)$ such that $y^2=f(x)$ is a short Weierstrass form equation for any elliptic curve $E/\Q$ with $j$-invariant $j_0$.

    Thus, if there are a $2$-isogeny and a $9$-isogeny defined over the same cubic field, then $\Q(t)=\Q(s)$ for some $t$ and $s$ satisfying $h(t)=0$ and $f(s)=0$. Then the splitting fields of $f(x)$ and $h(x)$ are the same. However, we can check using Magma that the splitting fields are different in each of the eight cases, so we can rule out $45\cdot 2$ and $63\cdot 2$. 

    To rule out $11\cdot 4, 15 \cdot 4, 19 \cdot 4, 21 \cdot 4, 37 \cdot 4$, we look at the polynomial $\phi_4(x, j_0)$ for each of the finitely many $j_0$. There are no irreducible factors of degree $1$ or $3$, which means there are no $4$-isogenies over cubic extensions.
\end{proof}

The numbers we haven't settled yet are $5\cdot 4$, $7\cdot 4$, $13\cdot 4$ and $27\cdot 4$. To do this, we will determine all rational points on some modular curves.

\begin{lemma}
    We have $5 \cdot 4, 13 \cdot 4, 27 \cdot 4 \not \in \Psi_\Q(3)$ and $7\cdot 4 \in \Psi_\Q(3)$.
\end{lemma}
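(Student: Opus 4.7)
The plan is to reduce each case to a question about rational points on a specific modular curve built from the parametrization \eqref{x20} of $X_{20}$. Suppose $E/\Q$ is a non-CM elliptic curve with a cyclic $4m$-isogeny over a cubic field $K$, for some $m \in \{5, 7, 13, 27\}$. This isogeny splits over $K$ into a $4$-isogeny and an $m$-isogeny. By Proposition \ref{nadneparnimnistanovo}, for $m \in \{5, 13\}$ the $m$-isogeny is already defined over $\Q$, and for $m = 27$ Lemma \ref{9-izo} provides a rational $9$-isogeny of $E$. By Proposition \ref{potencije 2 detaljno}, the $4$-isogeny is either defined over $\Q$ or forces the $2$-adic image of $E$ to be $H_{20}$. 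In the three negative cases the former possibility is ruled out: a rational $4$-isogeny combined with the rational $5$-, $13$-, or $9$-isogeny would produce a rational $20$-, $52$-, or $36$-isogeny, none of which lies in $\Psi_\Q(1)$. Hence in each of these cases $j(E) = j(t_0)$ for some $t_0 \in \Q$ via \eqref{x20}.

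For each $m \in \{5, 9, 13\}$ I would set up the affine curve
\[
C_m : \phi_m(x, j(t)) = 0,
\]
where $j(t)$ is as in \eqref{x20} and $\phi_m$ is the classical modular polynomial. A rational point $(t_0, x_0)$ with $j(t_0) \notin \{0, 1728\}$ corresponds to a non-CM elliptic curve $E/\Q$ with $2$-adic image $H_{20}$ and a rational cyclic $m$-isogeny, i.e.\ precisely a candidate for the desired $4m$-isogeny over a cubic field. The plan is to factor $\phi_m(x, j(t))$ over $\Q(t)$ in Magma, compute the genus of each irreducible component, and determine all rational points using Jacobian rank computations, Chabauty's method, or explicit parametrization when the genus is zero. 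For $n = 20$ and $n = 52$ the aim is to show that no such rational point produces a valid non-CM $E$.

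The case $n = 108$ demands one further check: even after locating a rational point on $C_9$, one must verify that the rational $9$-isogeny extends to a $27$-isogeny defined over the \emph{same} cubic as the $4$-isogeny (the cubic is intrinsic to the $H_{20}$ structure determined by $t_0$). This amounts to an explicit comparison of number fields in Magma, using the relevant factor of the $27$-division polynomial. For the positive case $n = 28$ the approach is reversed: exhibit a rational point $(t_0, x_0)$ on the analogous curve $C_7$ directly by a small search or by parametrizing a genus-zero component, then write down the elliptic curve with $j$-invariant $j(t_0)$ and verify that the composition of its rational $7$-isogeny with its $4$-isogeny over the cubic determined by $t_0$ is a cyclic $28$-isogeny.

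The main obstacle will be the rational-points analysis of $C_5$ and $C_{13}$. Since $\phi_{13}$ has degree $14$ in each variable, components of $C_{13}$ may have fairly high genus, and if their Jacobians have positive Mordell--Weil rank then one must apply Chabauty or a Mordell--Weil sieve rather than a bare rank computation. The additional field-matching step for $n = 108$ is also delicate: it can in principle fail to settle the question if the two cubic fields happen to coincide, in which case one falls back to a finer analysis of the mod $108$ image to force the contradiction.
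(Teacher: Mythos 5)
Your overall reduction matches the paper's: in the negative cases a rational $4$-isogeny is impossible (it would yield a rational $20$-, $52$- or $36$-isogeny, the last via Lemma \ref{9-izo}), so Proposition \ref{potencije 2 detaljno} pins the $2$-adic image to $H_{20}$, and the problem becomes a rational-points computation on a curve classifying the $H_{20}$-structure together with a rational $m$-isogeny. Where you genuinely differ is the model you compute on: you fiber $X_0(m)$ directly against $X_{20}$ via the modular polynomial $\phi_m(x,j(t))$, whereas the paper exploits the covers $X_{20}\to X_3$ and $X_{20}\to X_7$ and the explicit genus-zero parametrizations $g_p(h)$ of $X_0(p)$, finding all rational points on the \emph{intermediate} curves $g_3(h)=f_7(t)$, $g_5(h)=f_7(t)$, $g_7(h)=f_7(t)$ and $g_{13}(h)=f_3(t)$, which have genus $1,1,2,1$ and are handled by rank computations plus a single Chabauty argument. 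This projection step is precisely what keeps the genus low; it is the obstacle you flag yourself for $C_{13}$ (and $C_9$), whose direct fiber products with $X_{20}$ are appreciably harder. Since the intermediate curves classify less, the paper pays with a finite post-check on the surviving $j$-invariants (CM or not; existence of a rational $9$-isogeny in the $108$ case; whether the $4$-isogeny is really over a cubic rather than a sextic in the $20$ case), and your version would need analogous checks. Two corrections to your write-up: first, $j(t_0)\notin\{0,1728\}$ does not ensure non-CM --- you must discard all thirteen rational CM $j$-invariants (three of the five points in the paper's $p=3$ case are CM); second, for $108$ the delicate ``same cubic field'' matching you describe is unnecessary, because Lemma \ref{9-izo} says a $27$-isogeny over a cubic already forces a $9$-isogeny over $\Q$, so it suffices to check that no surviving $j$-invariant admits a rational $9$-isogeny, which is exactly how the paper concludes that case.
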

\begin{proof}
    If there is an elliptic curve $E$ with a cyclic isogeny of degree $5\cdot 4, 7 \cdot 4$  or  $13 \cdot 4$ over a cubic field, it cannot have a $4$-isogeny over $\Q$ since that would give us a cyclic isogeny over $\Q$ of degree $5 \cdot 4, 7 \cdot 4$ or $13\cdot 4$. Similarly, if there is a  $27\cdot 4$-isogeny over a cubic extension, we must have a $9$-isogeny over $\Q$ by Lemma \ref{9-izo}. But then we cannot have a $4$-isogeny over $\Q$, as we would get a $36$-isogeny over $\Q$, which is impossible.

    In any case, there is a $4$-isogeny over a cubic extension, and there is no $4$-isogeny over $\Q$. Thus, by Lemma \ref{potencije 2 detaljno}, the $2$-adic image must equal $H_{20}$. 

    The corresponding modular curve $X_{20}$ is a genus zero curve, and there are maps from $X_{20}$ to $X_3$ and $X_7$. 

    Recall that those three curves are parametrized as follows:
    \begin{align}
    X_3: j&=-t^2+1728, \label{x3}\\
    X_7: j&=\frac{32t-4}{t^4}\label{x7},\\
    X_{20}: j&=\frac{32(t+1)(t^2-3)^3-4(t^2-3)^4}{(t+1)^4}\label{x20}.
\end{align}

Denote by $f_3, f_7, f_{20}$ the rational functions on the right hand side of those parametrizations.

The $j$-invariants for which there is a $p$-isogeny for $p \in \{3,5,7,13\}$ are parametrized as follows (this is taken from \cite{alvaro}, Table 3):

\begin{align}
    X_0(3): j&=\frac{(h+27)(h+3)^3}{h}, \\
    X_0(5): j&=\frac{(h^2+10h+5)^3}{h}, \\
    X_0(7): j&=\frac{(h^2+13h+49)(h^2+5h+1)^3}{h}, \\
    X_0(13): j&=\frac{(h^2+5h+13)(h^4+7h^3+20h^2+19h+1)^3}{h}.
\end{align}

Denote by $g_3, g_5, g_7, g_{13}$ the polynomials on the right hand side of those parametrizations.

We need to find all rational points on curves $g_p(h)=f_{20}(t)$ for $p \in \{3,5,7,13\}$, and check which of those points yields a non-CM $j$-invariant. We'll call these points non-trivial.

We need to prove there are no non-trivial points on those curves. It's enough to prove this for $f_7$ or $f_3$ instead of $f_{20}$, as there is a map from $X_{20}$ to $X_3$ and $X_7$.

\begin{itemize}
    \item \textbf{Case $p=3$.} We use the curve $X_7$, i.e. we look at the equation $g_3(h)=f_7(t)$. Using Magma, we see that this is a genus $1$ curve, and we can find all the rational points. There are $5$ points in total, three of which yield CM $j$-invariants. For the remaining two $j$-invariants, we use modular polynomials to check that there are no $9$-isogenies over $\Q$ (or equivalently that there are no $27$-isogenies over cubic extensions).

    \item \textbf{Case $p=5$.} We use the curve $X_7$ again, i.e. we look at the equation $g_5(h)=f_7(t)$. This is a genus $1$ curve, and we can find all points on this curve. It turns out that there are two points on this curve. However, one can check that the corresponding two $j$-invariants only yield $4$-isogenies over degree $6$ extensions of the rationals.

    \item \textbf{Case $p=7$.} The curve $g_7(h)=f_7(t)$ is hyperelliptic of genus two.  All rational points on this curve can be found using the Chabauty method.
    
    It turns out that there are two rational points, and the corresponding $j$-invariants are $\frac{351}{4}$ and $\frac{-38575685889}{16384}$. It can easily be checked using modular polynomials that both of them yield $4$-isogenies over cubic  extensions, so $28 \in \Psi_\Q(3)$.

    \item \textbf{Case $p=13$.} We use the curve $X_3$ this time, i.e. we look at the equation $g_{13}(h)=f_3(t)$. This is a genus one curve.  It turns out there are no non-trivial rational points on this curve, i.e. no suitable $j$-invariants, so $4 \cdot 13 \not \in \Psi_\Q(3)$.

\end{itemize}
\end{proof}

This concludes the proof of the cubic case (Theorem \ref{kubni}).

\begin{remark}
    The calculations in the previous proof were done in Magma.

    We would like to thank Borna Vukorepa for finding all rational points on the curve $g_7(h)=f_7(t)$ and providing the relevant Magma code.
    
\end{remark}

\section{Isogeny degrees over extensions of prime degree $p>3$}

 We've seen in Corollary \ref{l-kl} that $\Psi_\Q(p)$ contains all numbers of the form $k\cdot p$ for $k \in \Psi_\Q(1)$ divisible by $p$. We now prove that these are the only elements of $\Psi_\Q(p)$ which aren't already in $\Psi_\Q(1)$.

\begin{lemma}\label{no2or3overell}
    Suppose that $\ell$ and $p$ are prime numbers with $p \nmid \ell(\ell-1)$, and let $E/\Q$ be an elliptic curve with an $\ell^n$-isogeny defined over a degree $p$ extension $K/\Q$. Then this isogeny is defined over $\Q$.
\end{lemma}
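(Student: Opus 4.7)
The plan is to chase the claim along the filtration $C_1 \subset C_2 \subset \cdots \subset C_n = C$, where $C$ denotes the kernel of the given $\ell^n$-isogeny and $C_k := \ell^{n-k}C$ is the unique cyclic subgroup of order $\ell^k$ inside $C$. Since $\Q(C_k)\subseteq \Q(C) \subseteq K$, the degree $[\Q(C_k):\Q]$ divides $p$ and is therefore either $1$ or $p$. I will show by induction on $k$ that it equals $1$, so in particular $\Q(C) = \Q$.

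For the base case $k=1$ I would split by the parity of $\ell$. When $\ell = 2$, there are only $\ell+1=3$ cyclic subgroups of order $2$ in $E[2]$, so the $G_\Q$-orbit of $C_1$ has size at most $3$; since it also divides $p>3$ it must be a singleton. When $\ell$ is odd, I would invoke Proposition~\ref{nadneparnimnistanovo}: because $K$ has odd prime degree $p>3$ and in particular is not cubic, the exceptional case does not apply, so the existence of an $\ell$-isogeny over $K$ forces $E$ to admit an $\ell$-isogeny over $\Q$, say with kernel $C_1'$. If $C_1' = C_1$ we are done. Otherwise $\rho_{E,\ell}(G_\Q)$ is contained in the Borel subgroup of $\GL_2(\F_\ell)$ stabilising $C_1'$, and its induced action on the remaining $\ell$ cyclic order-$\ell$ subgroups factors through $\AGL_1(\F_\ell)$. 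The elementary lemma preceding Lemma~\ref{p-ili-dijeli-p-1} then bounds any nontrivial orbit by $\ell$ or by a divisor of $\ell-1$, so an orbit of size $p$ would force $p \mid \ell(\ell-1)$, contradicting our hypothesis.

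For the inductive step, suppose $\Q(C_{k-1}) = \Q$. Applying Lemma~\ref{p-ili-dijeli-p-1} over the base field $\Q$, with $C_{k-1}$ in the role of $C$ and $C_k$ as one of its lifts $D$, tells us that $[\Q(C_k):\Q(C_{k-1})] = [\Q(C_k):\Q]$ is either $\ell$ or a divisor of $\ell-1$. Since it also divides $p$ and $p \nmid \ell(\ell-1)$, this index equals $1$, completing the induction.

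The main obstacle is the base case for odd $\ell$: a purely group-theoretic orbit bound inside $\GL_2(\F_\ell)$ is not enough, because $p$ could a priori divide $\ell+1$ (for instance $\ell=19$, $p=5$ satisfies the hypothesis). Proposition~\ref{nadneparnimnistanovo} is what allows one to reduce to the Borel case and then use the $\AGL_1(\F_\ell)$-orbit structure to kill the $\ell+1$ factor; this is also where the assumption $p>3$ (which makes $K$ non-cubic and so excludes the exceptional $j$-invariant) enters in an essential way.
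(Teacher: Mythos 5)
Your route is genuinely different from the paper's. The paper's proof is a one-line counting argument: $[\rho_{E,\ell^n}(G_\Q):\rho_{E,\ell^n}(G_K)]$ divides $[K:\Q]=p$, and (it asserts) $p\nmid|\GL_2(\Z/\ell^n\Z)|$, so the two images coincide and the kernel is $G_\Q$-stable. Your filtration argument --- Proposition \ref{nadneparnimnistanovo} to put the mod $\ell$ image inside a Borel, the $\AGL_1(\F_\ell)$ orbit bound for the base case, and Lemma \ref{p-ili-dijeli-p-1} for the inductive step --- is longer but in fact more robust: as you note, $|\GL_2(\Z/\ell^n\Z)|$ is divisible by $\ell+1$, so the bare coprimality argument implicitly needs $p\nmid \ell+1$, which does not follow from $p\nmid\ell(\ell-1)$ and indeed fails in one of the paper's own applications of this lemma ($p=7$, $\ell=13$ inside Lemma \ref{coprime-to-l}). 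Once the image is known to lie in a Borel, the factor $\ell+1$ disappears from the relevant group orders and your argument closes that case. One small imprecision for odd $\ell$: you justify discarding the exceptional case of Proposition \ref{nadneparnimnistanovo} by ``$p>3$'', but the hypothesis allows $p=3$ (e.g.\ $\ell=5$); the correct observation is that the exceptional case requires $\ell=7$, which is incompatible with $3\nmid\ell(\ell-1)=42$, so your odd-$\ell$ argument is easily repaired and works for every $p$ permitted by the hypothesis.

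The genuine gap is the base case $\ell=2$, where you again invoke $p>3$ with no justification: for $\ell=2$ the hypothesis $p\nmid\ell(\ell-1)=2$ only forces $p$ odd, and for $p=3$ the $G_\Q$-orbit of $C_1$ can have size exactly $3$. This is not an oversight you could patch, because the statement as literally written is false for $(\ell,p)=(2,3)$: the paper's own Section 3 shows (via Proposition \ref{potencije 2 detaljno} and the rational points on $X_{20}$, e.g.\ $j=351/4$) that there are non-CM curves over $\Q$ with a $4$-isogeny defined over a cubic field but not over $\Q$, which is exactly how $28\in\Psi_\Q(3)$ arises. So the lemma needs a stronger hypothesis --- $p\nmid\ell(\ell^2-1)$ for the paper's coprimality proof, or ``$\ell$ odd with $p\nmid\ell(\ell-1)$, or $\ell=2$ and $p\geq5$'' for your Borel/orbit proof --- either of which covers all the uses made of it in Section 4, where $p\geq5$. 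You should state that extra condition explicitly rather than silently assuming $p>3$.
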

\begin{proof}
    This follows from the fact that $[\rho_{E,\ell^n}(G_\Q):\rho_{E,\ell^n}(G_K)]$ divides $[K:\Q]$, and the fact that $\GL_2(\Z/\ell^n\Z)$ has order coprime to $p$. This means that the images $\rho_{E, \ell^n}(G_\Q)$ and $\rho_{E, \ell^n}(G_K)$ are equal, which implies the claim.
\end{proof}
\subsection{Case $p>5$}
\begin{lemma}\label{definiranomodl}
    Let $\ell\geq 7$ be a prime number, and let $E/\Q$ be an elliptic curve with an $\ell$-isogeny over a number field of odd degree. Then its $\ell$-adic representation $\rho_{E, p^\infty}(G_\Q)$ is defined modulo $\ell$. 
\end{lemma}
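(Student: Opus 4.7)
The plan is to use Proposition \ref{nadneparnimnistanovo} to reduce to curves with a $\Q$-rational $\ell$-isogeny, and then invoke the classification of $\ell$-adic Galois images. Since $\ell\geq 7$ is odd and the base-change field has odd degree, Proposition \ref{nadneparnimnistanovo} forces either (i) $E$ has an $\ell$-isogeny defined over $\Q$, or (ii) $\ell=7$ and $j(E)=2268945/128$. The property of being defined modulo $\ell$ is insensitive to quadratic twisting for odd $\ell$ (twisting multiplies $\rho_{E,\ell^\infty}$ by a quadratic character, which is trivial modulo $\ell$), so in case (ii) it suffices to verify the claim for a single representative curve with this $j$-invariant; this can be done by direct computation via Sutherland's algorithm or by reading off the $7$-adic image from the LMFDB.

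Under hypothesis (i), Mazur's theorem on rational isogenies restricts $\ell$ to $\{7,11,13,17,19,37\}$. For $\ell\in\{11,17,19,37\}$ the modular curve $X_0(\ell)$ has positive genus, so only finitely many $j$-invariants support a rational $\ell$-isogeny (listed in Table 4 of \cite{alvaro}); a case-by-case verification using the classification of $\ell$-adic images in \cite{rouse-elladic} confirms that in each case $\rho_{E,\ell^\infty}(G_\Q)$ is the full preimage of its mod-$\ell$ reduction in $\GL_2(\Z_\ell)$, which is equivalent to being defined modulo $\ell$. For $\ell\in\{7,13\}$ the curve $X_0(\ell)$ has genus zero, giving an infinite family, and the argument must proceed structurally: if $\rho_{E,\ell^\infty}(G_\Q)$ fails to be defined modulo $\ell$, then the mod-$\ell^2$ image is a proper subgroup $H\leq\GL_2(\Z/\ell^2\Z)$ of the preimage of the mod-$\ell$ Borel image in which $E$ determines a non-CM rational point on the corresponding modular curve $X_H$ strictly covering $X_0(\ell)$. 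One then uses \cite{rouse-elladic} to enumerate the finite list of such $H$ and verify that none admits a non-CM rational point, completing the proof. The UTT-subgroup bounds developed earlier in the paper (notably Proposition \ref{onasituacija}) should be useful here to constrain $H$ and narrow the list of candidate covers.

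The main obstacle is the analysis of the infinite families $\ell\in\{7,13\}$: a fully self-contained treatment would require computing genera and rational points on the (finitely many but nontrivial) intermediate modular curves of level dividing $\ell^2$ sitting strictly between $X_0(\ell)$ and the maximal such cover. The approach therefore rests on the external classification in \cite{rouse-elladic}, after which the remaining verifications reduce to a routine inspection of the finitely many allowed $\ell$-adic images.
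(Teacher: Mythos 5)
Your opening reduction is the same as the paper's: apply Proposition \ref{nadneparnimnistanovo} to get an $\ell$-isogeny over $\Q$ (plus the exceptional pair $\ell=7$, $j=2268945/128$, which you handle separately by a direct computation of the $7$-adic image -- a case the paper's own one-line proof silently skips, so flagging it is a genuine plus). After that, however, the paper simply invokes \cite[Theorem 3.9]{lombardo}, which directly gives that the $\ell$-adic image of a curve with a rational $\ell$-isogeny, $\ell\geq 7$, is defined modulo $\ell$; you instead propose to re-derive that theorem from Mazur plus the classification in \cite{rouse-elladic}. For $\ell\in\{11,17,19,37\}$ (finitely many $j$-invariants) this is a legitimate finite check. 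But for the genus-zero cases $\ell\in\{7,13\}$ -- which are the entire content of the statement -- your argument is only a plan: ``enumerate the finite list of such $H$ and verify that none admits a non-CM rational point.'' That verification is exactly the theorem you are trying to prove; for $\ell=7$ it is the Greenberg--Rubin--Silverberg--Stoll result, whose proof required determining the rational points of a high-genus cover of $X_0(7)$, and nothing in your proposal (nor in this paper's elementary machinery) reproduces it. So as written the proposal is conditional on an unexecuted enumeration, i.e.\ it has a genuine gap precisely where the difficulty lies; to close it at the citation level you would have to point to a specific statement in \cite{rouse-elladic} (or to Greenberg and GRSS) asserting that every non-CM $E/\Q$ with a rational $\ell$-isogeny, $\ell\in\{7,13\}$, has $\ell$-adic level $\ell$, which is in effect what the paper does by citing Lombardo.

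Two smaller points. The claim that being defined modulo $\ell$ is twist-invariant is true for odd $\ell$, but not for the reason you give: the quadratic character is not ``trivial modulo $\ell$'' (it takes the value $-1$); the correct argument is that the twisted image differs from the original only by signs, and every element of the pro-$\ell$ group $I+\ell M_2(\Z_\ell)$ is a square, so containment of that group is unaffected. Also, your implicit step ``not defined mod $\ell$ $\Rightarrow$ the mod $\ell^2$ image is a proper subgroup of the preimage of the mod $\ell$ image'' needs (and has, for odd $\ell$) a Frattini-type justification, but should be stated. Finally, Proposition \ref{onasituacija} is unlikely to help here: the UTT hypotheses arise from an isogeny over an extension failing to descend, and no such hypothesis is available in this setting, where the question is purely about the level of the $\ell$-adic image over $\Q$.
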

\begin{proof}
    Note that the $\ell$-isogeny is actually defined over $\Q$ by Proposition \ref{nadneparnimnistanovo}. The claim now follows immediately from \cite[Theorem 3.9.]{lombardo}.

\end{proof}

\begin{corollary}
   If $p\geq 7$ is a prime, then $p^3 \not \in \Psi_\Q(p)$. 
\end{corollary}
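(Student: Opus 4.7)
The plan is to combine Lemma \ref{definiranomodl} with the UTT bound from Proposition \ref{onasituacija}, exploiting the smallness of $[K:\Q]=p$. The idea is that ``defined mod $p$'' forces the mod $p^3$ image of $G_\Q$ to be enormous, while a $p^3$-isogeny over such a small extension $K$ forces a subgroup of index at most $p$ that is uppertriangular mod $p^3$; the UTT machinery then says such a subgroup is much too small to satisfy both constraints simultaneously.

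Concretely, suppose for contradiction that some non-CM $E/\Q$ with rational $j$-invariant has a $p^3$-isogeny over a degree $p$ extension $K/\Q$ with $p \geq 7$. Reducing the kernel gives a $p$-isogeny over the odd-degree field $K$, so Lemma \ref{definiranomodl} applies and $\rho_{E, p^\infty}$ is defined mod $p$. Set $G = \rho_{E, p^3}(G_\Q)$, $H = \rho_{E, p^3}(G_K)$, and let $\pi: \GL_2(\Z/p^3\Z) \to \GL_2(\F_p)$ be reduction. Then $[G:H]$ divides $p$, and $G$ contains all of $\ker \pi$, which has order $p^8$, giving $|G| = p^8 \, |\pi(G)|$.

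Choosing a basis in which $H \subseteq B(p^3)$, $H$ is in particular $2$-UTT and uppertriangular mod $p$; applying Proposition \ref{onasituacija} with $n = 3$ yields $|H| \leq p^6\, |\pi(H)| \leq p^6\, |\pi(G)|$. On the other hand $|H| \geq |G|/p = p^7\, |\pi(G)|$, and the two bounds are incompatible, completing the proof. (Equivalently, one can argue directly: the $p^7$ lower bound comes from the product formula applied to the normal subgroup $\ker\pi \subseteq G$, while the $p^6$ upper bound comes from counting matrices $I + pM$ with $M \in M_2(\Z/p^2\Z)$ whose bottom-left entry vanishes in $\Z/p^2\Z$.)

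The only (minor) obstacle is the exception in Proposition \ref{nadneparnimnistanovo} with $\ell = 7$ and $j(E) = 2268945/128$: for this particular curve, Lemma \ref{definiranomodl} may not apply as stated, since its proof deduces a $7$-isogeny over $\Q$ from the odd-degree hypothesis. This case can be handled either by re-examining Lombardo's theorem in this special situation (the mod $7$ image is already explicit enough), or by explicitly factoring the modular polynomial $\phi_{7^3}(x, 2268945/128)$ in Magma and checking that it has no irreducible factor of degree dividing $7$, ruling out any $7^3$-isogeny over a degree $7$ extension for this curve.
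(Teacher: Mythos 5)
Your argument is correct, but it takes a different route from the paper's. The paper disposes of this corollary in one line: since $\rho_{E,p^\infty}(G_\Q)$ is defined modulo $p$ (Lemma \ref{definiranomodl}), Lemma \ref{definedmodpn} gives $[\Q(C):\Q(pC)]=[\Q(pC):\Q(p^2C)]=p$ for a cyclic subgroup $C$ of order $p^3$, so the field of definition of $C$ has degree divisible by $p^2$ and cannot sit inside a degree-$p$ extension (with $p^2\notin\Psi_\Q(1)$ for $p\geq 7$ feeding into the same ladder). You instead run a group-order count: ``defined mod $p$'' forces $\rho_{E,p^3}(G_\Q)$ to contain the full kernel of reduction, of order $p^8$, while $\rho_{E,p^3}(G_K)$ has index dividing $p$ and lies in a conjugate of $B(p^3)$, whose intersection with that kernel has order only $p^6$; the resulting inequality $p^7\,|\pi(H)|\leq |H|\leq p^6\,|\pi(H)|$ is indeed a contradiction. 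Two remarks. First, invoking Proposition \ref{onasituacija} is legitimate (a subgroup of $B(p^3)$ is vacuously $2$-UTT and uppertriangular mod $p$) but heavier than needed; your parenthetical direct count of uppertriangular lifts of $I$ is the cleaner way to get the $p^6$ bound, and in effect you are re-proving a special case of the field-degree statement that Lemma \ref{definedmodpn} already packages. Second, your worry about the exceptional curve $j=2268945/128$ concerns the proof of Lemma \ref{definiranomodl} rather than your corollary: as stated, that lemma's hypothesis (an $\ell$-isogeny over some odd-degree field) covers the exceptional curve as well, and the paper's own proof of the corollary relies on the same lemma, so your argument is no worse off; your proposed modular-polynomial check is a harmless belt-and-braces addition but not required for the corollary itself.
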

\begin{proof}
    This is immediate from Lemma \ref{definiranomodl}, Lemma \ref{definedmodpn} and the fact that there are no $p^2$-isogenies over $\Q$.
\end{proof}

\begin{lemma}\label{coprime-to-l}
    Let $p\geq 7$ be a prime, and let $n \in \Psi_\Q(p)$ be coprime to $p$. Then $n \in \Psi_\Q(1)$ and any $n$-isogeny defined over a degree $p$ extension of $\Q$ is already defined over $\Q$.
\end{lemma}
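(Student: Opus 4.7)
The plan is to factor $n = \prod_{\ell} \ell^{a_\ell}$ into prime powers and to show, for each prime $\ell \mid n$, that the corresponding $\ell^{a_\ell}$-isogeny over $K$ is already defined over $\Q$. Since $\gcd(n,p)=1$, every such $\ell$ satisfies $\ell \neq p$. If $C \subseteq E(\overline{K})$ is the $G_K$-stable cyclic subgroup of order $n$ defining our isogeny, then by the Chinese remainder theorem $C = \bigoplus_{\ell \mid n} C_\ell$, where $C_\ell$ is the $\ell$-primary part of $C$, cyclic of order $\ell^{a_\ell}$, and each $C_\ell$ is again $G_K$-stable. Once each $C_\ell$ is known to be $G_\Q$-stable, reassembling shows $C$ itself is $G_\Q$-stable, so the $n$-isogeny descends to $\Q$, and in particular $n \in \Psi_\Q(1)$.

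For $\ell = 2$ the assumption $p \geq 7$ gives $p \nmid 2 = \ell(\ell-1)$, and Lemma \ref{no2or3overell} finishes the case immediately. For odd $\ell$ the target is the same divisibility hypothesis $p \nmid \ell(\ell-1)$, after which Lemma \ref{no2or3overell} again applies. The first step is to restrict $\ell$: by Proposition \ref{nadneparnimnistanovo}, the existence of an $\ell$-isogeny over the odd-degree field $K$ forces $E$ to have an $\ell$-isogeny over $\Q$, except possibly in the special case $\ell = 7$, $j(E) = 2268945/128$, which I postpone to the last paragraph. Since $E$ is non-CM with rational $j$-invariant, the Mazur--Kenku classification of $\Psi_\Q(1)$ then forces $\ell \in \{3,5,7,11,13,17,37\}$. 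For these values we have $\ell - 1 \in \{2,4,6,10,12,16,36\}$, whose prime divisors are all at most $5$, so no prime $p \geq 7$ divides $\ell-1$. Together with $p \neq \ell$, this yields $p \nmid \ell(\ell-1)$, and Lemma \ref{no2or3overell} delivers the descent of $C_\ell$.

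The exceptional case $\ell = 7$ with $j(E) = 2268945/128$ will be dispatched by noticing that Proposition \ref{nadneparnimnistanovo} is not actually needed there. If $p = 7$, the hypothesis $\gcd(n,p)=1$ already forces $\ell \neq 7$, so the exception cannot appear. If instead $p \geq 11$, then $p \nmid 42 = 7 \cdot 6$, so Lemma \ref{no2or3overell} applies directly to the $7^{a_7}$-isogeny with no appeal to Proposition \ref{nadneparnimnistanovo}. Thus the exception is harmless. The only mildly delicate point — the main obstacle in disguise — is recognizing this bypass: Proposition \ref{nadneparnimnistanovo}'s exceptional curve never interferes, because Lemma \ref{no2or3overell} can take over for $\ell = 7$ exactly in the regime ($p \geq 11$) where $\ell = 7$ is permitted.
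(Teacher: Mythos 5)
Your proposal is correct and follows essentially the same route as the paper: reduce to the prime-power parts of $n$ and apply Lemma \ref{no2or3overell}, using that every relevant prime $\ell \neq p$ lies in $\Psi_\Q(1)$ and hence satisfies $p \nmid \ell(\ell-1)$ for $p \geq 7$. Your write-up merely makes explicit what the paper leaves implicit, namely the appeal to Proposition \ref{nadneparnimnistanovo} to force $\ell \in \Psi_\Q(1)$ and the observation that the exceptional $\ell = 7$ curve causes no trouble since Lemma \ref{no2or3overell} applies directly there when $p \geq 11$.
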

\begin{proof}
    It suffices to prove the claim for prime powers.  This follows from Lemma \ref{no2or3overell} as there are no primes $\ell \neq p$ in $\Psi_\Q(1)$ such that $p \mid \ell(\ell-1)$. 
\end{proof}

We can now finish the proof of Theorem \ref{ell>3} for $p \geq 7$. Namely, let $n \in \Psi_\Q(\ell)$, $n \not \in \Psi_\Q(1)$. By Lemma \ref{coprime-to-l}, $n=k\cdot p^t$ for some $k\in \Psi_\Q(1)$ coprime to $p$ and some $t \geq 1$. By Lemma \ref{definiranomodl}, $t \leq 2$.

Since $k$ and $p$ are coprime, there must be a $k \cdot p$-isogeny over $\Q$, so $k \cdot p \in \Psi_\Q(1)$. Thus, $n$ is of the form stated in the theorem.

\subsection{Case $p=5$}

\begin{lemma}\label{25-125}
     Let $E/\Q$ be an elliptic curve with a $125$-isogeny defined over a quintic extension of $\Q$. Then there is a $25$-isogeny defined over $\Q$.
\end{lemma}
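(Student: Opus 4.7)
I will argue by contradiction: assume $E$ has no $25$-isogeny over $\Q$. Proposition \ref{nadneparnimnistanovo} already gives a $5$-isogeny of $E$ over $\Q$. Let $C$ denote the cyclic order-$125$ subgroup defined over $K$; then $\Q(C)\subseteq K$ has degree exactly $5$ over $\Q$, since $125\notin\Psi_\Q(1)$. Lemma \ref{p-ili-dijeli-p-1} applied to the chain $C\supset 5C\supset 25C$ forces each of the indices $a_1 := [\Q(C):\Q(5C)]$, $a_2 := [\Q(5C):\Q(25C)]$, $a_3 := [\Q(25C):\Q]$ to be either $5$ or to divide $4$, and $a_1 a_2 a_3 = 5$, so exactly one of them equals $5$. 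If $a_1 = 5$, then $5C$ is $\Q$-rational and itself gives a $25$-isogeny over $\Q$, a contradiction. If $a_2 = 5$, then $25C$ is $\Q$-rational while the Galois orbit of $5C$ among the six cyclic order-$25$ subgroups of $E[25]$ containing $25C$ has size $5$; the remaining sixth subgroup must then be Galois-fixed, yielding a $25$-isogeny over $\Q$, again a contradiction.

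This forces $a_3 = 5$, so $25C$ is not $\Q$-rational and the $\Q$-rational $5$-subgroup $C_5'$ coming from the $5$-isogeny over $\Q$ satisfies $C_5'\ne 25C$. Since $a_1 = a_2 = 1$, Corollary \ref{rewritten-as-utt} tells me that in the basis of $T_5(E)$ adapted to $C$, the mod-$125$ image $H := \rho_{E,125}(G_\Q)$ is a UTT subgroup of $\GL_2(\Z/125\Z)$. In that basis $\pi_5(H)$ stabilizes $C_5'$ but not $25C$, hence lies in a Borel $B'\ne B(5)$, and $B'\cap B(5)$ is a split Cartan of order $16$. The UTT property gives $\pi_5(H)\cap B(5) = \pi_5(H')$ where $H' := \rho_{E,125}(G_{\Q(C)})$, and the transitive action of $\pi_5(H)$ on the five non-$\Q$-rational $5$-subgroups yields $|\pi_5(H)| = 5\,|\pi_5(H')| \le 5\cdot 16 = 80$. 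Transitivity also produces an order-$5$ (unipotent) element of $\pi_5(H)$ whose trace equals $2$ and whose bottom-left entry is nonzero mod $5$ in the basis adapted to $C$; this is a short direct computation using the explicit form of a unipotent preserving the line spanned by $C_5'$. The second bound in Corollary \ref{m=1, k=1 version} then forces $|H| \le 5^{n-1}|\pi_5(H)| \le 25\cdot 80 = 2000$, so $[\GL_2(\Z/125\Z):H] \ge 93{,}750$.

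This conflicts with the much smaller index achievable by a $5$-adic Galois image of a non-CM elliptic curve over $\Q$ carrying a $5$-isogeny, as given by the Rouse--Sutherland--Zureick-Brown classification in \cite{rouse-elladic} (after passing to $\langle H,-I\rangle$ if necessary), delivering the desired contradiction. The main obstacle I anticipate is the basis bookkeeping: the UTT condition is only visible in the basis adapted to $C$, whereas the unipotent element of $\pi_5(H)$ is most naturally written in a basis adapted to $C_5'$, so I must transport it back and verify the nonvanishing of its bottom-left entry in the correct basis before Corollary \ref{m=1, k=1 version} applies. A secondary point is identifying precisely which admissible $5$-adic images from \cite{rouse-elladic} could plausibly arise and confirming that none of them has mod-$125$ index anywhere near $93{,}750$.
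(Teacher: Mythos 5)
Your case analysis has a genuine gap at the case you call $a_2=5$, and it is precisely the hard case. You dismiss it by saying that there are six cyclic order-$25$ subgroups of $E[25]$ containing $25C$, so that an orbit of size $5$ forces a sixth, Galois-fixed one. But a cyclic group of order $25$ contains a unique subgroup of order $5$, so ``$D$ contains $25C$'' is the same as ``$5D=25C$'', and by the paper's own counting lemma (the lemma preceding Lemma \ref{p-ili-dijeli-p-1}) there are exactly $\ell=5$ such subgroups, not six. Hence an orbit of size $5$ exhausts all of them and no fixed lift is forced; indeed Lemma \ref{p-ili-dijeli-p-1}(ii) says that in this situation \emph{every} lift of $25C$ has degree $5$ over $\Q$, which is perfectly consistent with the standing assumption that no $25$-isogeny exists over $\Q$. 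So the configuration $\Q(25C)=\Q$, $[\Q(5C):\Q]=5$, $\Q(C)=\Q(5C)$ survives your argument, and your tools do not reach it: there the mod-$125$ image $H$ is $2$-UTT and uppertriangular mod $5$ (in the basis adapted to $C$), so it contains no matrix with nonzero bottom-left entry mod $5$ and neither bound of Corollary \ref{m=1, k=1 version} applies. This is exactly the situation the paper's proof is built for: Proposition \ref{onasituacija} (which rests on the $p$-adic expansion Lemma \ref{padskirazvoj}) gives $|H| \mid 5^6|\pi(H)| \le 5^7\cdot 16$, and the contradiction then comes from Greenberg's theorem, which for a curve with a rational $5$-isogeny forces either the $5$-adic image to be defined mod $5$ or its index in $\GL_2(\Z_5)$ to be prime to $25$.

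Your remaining case $a_3=5$ (rational $5$-kernel different from $25C$) is handled with an essentially correct bound $|H|\le 5^2\cdot 80=2000$, but the concluding contradiction is not actually established: you appeal to \cite{rouse-elladic} for ``much smaller index'' without identifying which $5$-adic images can occur or quoting a bound, and you acknowledge this is unverified. The clean way to close it is the paper's: since $2000=2^4\cdot 5^3$ and $|\GL_2(\Z/125\Z)|$ has $5$-part $5^9$, the mod-$125$ index (hence the $5$-adic index) would be divisible by $25$, contradicting \cite[Theorem 2]{greenberg} (the alternative there, that the representation is defined mod $5$, is excluded by Lemma \ref{definedmodpn}, since it would force $[\Q(C):\Q(25C)]=25>5$). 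With that substitution your $a_3=5$ case is fine, but the miscounted $a_2=5$ case still requires the Proposition \ref{onasituacija} argument or an equivalent, so as written the proposal does not prove the lemma.
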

\begin{proof}
    The curve $E$ certainly has a $5$-isogeny over $\Q$ by Proposition \ref{nadneparnimnistanovo}. From \cite[Theorem 2]{greenberg} it follows that either the $5$-adic representation is defined modulo $5$ or the index of the $5$-adic representation in $\GL_2(\Z_5)$ is not divisible by $25$. In the former case, we can use Lemma \ref{definedmodpn}.

    Suppose that $[\GL_2(\Z_5):\rho_{E,5^\infty}(G_\Q)]$ is not divisible by $25$. Suppose that there is no $25$-isogeny over $\Q$. Then the group $H:=\rho_{E,125}(G_\Q)$ is a $2$-UTT subgroup of $\GL_2(\Z/125\Z)$ which is uppertriangular mod $5$.

    Let $\pi: \GL_2(\Z/125\Z) \to \GL_2(\Z/5\Z)$ denote the canonical surjection. Proposition \ref{onasituacija} tells us that $|H| \text{ divides } 5^6 |\pi(H)|, \text{ which divides } 5^6 |B(5)|=5^7\cdot 16.$

    However, then $[\GL_2(\Z/125\Z): H]$ is divisible by $25$ as $|\GL_2(\Z/125\Z)|$ is divisible by $5^9$ and $|H|$ isn't divisible by $5^8$. This yields a contradiction.
\end{proof}

\begin{lemma}\label{potprostih-quintic}
    We have $5^4 \not \in \Psi_\Q(5)$.
\end{lemma}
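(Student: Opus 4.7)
The plan is to adapt the strategy of Lemma~\ref{25-125} one level higher, to $5^4$. Assume for contradiction that $E/\Q$ is a non-CM elliptic curve with rational $j$-invariant admitting a cyclic $625$-isogeny defined over a quintic extension $K/\Q$, and let $C$ denote the corresponding cyclic subgroup of order $625$. Since $625 \notin \Psi_\Q(1)$ and $K$ has prime degree, we must have $\Q(C) = K$; similarly, since $125 \notin \Psi_\Q(1)$, the $125$-subgroup $5C$ is not defined over $\Q$, so $\Q(5C) = K$ too. By Proposition~\ref{nadneparnimnistanovo}, $E$ has a $5$-isogeny over $\Q$, so \cite[Theorem~2]{greenberg} applies: either (a) $\rho_{E, 5^\infty}$ is defined modulo $5$, or (b) $[\GL_2(\Z_5):\rho_{E, 5^\infty}(G_\Q)]$ is not divisible by $25$. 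I would handle these two cases separately.

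In case (a), Lemma~\ref{definedmodpn} applied at $n=4$ gives $[K(C):K(5C)] = 5$. Since $K(C) = K$ has degree $5$ over $\Q$, this forces $\Q(5C) = \Q$, i.e.\ $E$ admits a $125$-isogeny over $\Q$, contradicting $125 \notin \Psi_\Q(1)$.

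In case (b), I would bound the order of $H := \rho_{E, 625}(G_\Q)$ from both sides. Since $[K:\Q] = 5$, the subgroup $H' := \rho_{E, 625}(G_K)$ has index dividing $5$ in $H$. In the basis where $C = \langle e_1 \rangle$, $H'$ is upper triangular, so $|H'| \leq |B(625)| = \phi(625)^2 \cdot 625 = 5^{10} \cdot 16$, hence $|H| \leq 5^{11} \cdot 16$ and the $5$-part of $|H|$ is at most $5^{11}$. On the other hand, $|\GL_2(\Z/625\Z)| = 5^{13} \cdot 96$, and the case hypothesis bounds the $5$-part of $[\GL_2(\Z/625\Z):H]$ by $5$, forcing the $5$-part of $|H|$ to be at least $5^{12}$. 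This contradiction finishes the proof.

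The delicate step is case (b). In the analogous part of the proof of Lemma~\ref{25-125} (which works at mod $125$) the same naive upper-triangular bound only matches the lower bound, which is why Proposition~\ref{onasituacija} had to be invoked there. Here, moving up to mod $625$ buys precisely one extra factor of $5$ in the gap between $|B(5^n)|$ and the $5$-part of $|\GL_2(\Z/5^n\Z)|$, so no UTT analysis is needed at all; the only thing I would verify carefully is the $5$-adic arithmetic of these two quantities.
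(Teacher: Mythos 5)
Your proof is correct and takes essentially the same route as the paper's: the Greenberg dichotomy, with the defined-mod-$5$ case disposed of via Lemma \ref{definedmodpn} and the small-index case disposed of by comparing the $5$-part of the index of $\rho_{E,5^4}(G_K)$ (at most $5^2$) with the $5$-part $5^3$ of $[\GL_2(\Z/5^4\Z):B(5^4)]$ — and, as you observe, the paper likewise needs no UTT analysis at this level. The only blemish is notational: in case (a) you write $[K(C):K(5C)]=5$ where the base field must be $\Q$, i.e.\ $[\Q(C):\Q(5C)]=5$, which together with $\Q(C)=K$ yields exactly the contradiction you state.
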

\begin{proof}
    Let $E/\Q$ be an elliptic curve with a $5^4$-isogeny over a degree $5$ extension $K$. Then $E$ certainly has a $5$-isogeny over $\Q$ by Proposition \ref{nadneparnimnistanovo}.

    From \cite[Theorem 2.2]{greenberg}, we know that either the $5$-adic representation is defined modulo $5$ or the index is not divisible by $25$. In the former case, we immediately reach a contradiction with Lemma \ref{definedmodpn}.

    In the latter case, suppose that there is a degree $5$ extension $K/\Q$ such that there is a $5^4$-isogeny over $K$. Then, since $\rho_{E, 5^4}(G_K)$ is an index $5$ subgroup of $\rho_{E,5^4}(G_\Q)$, we know that the index $[\GL_2(\Z/5^4\Z):\rho_{E,5^4}(G_K)]$ is not divisible by $5^3$. 
    
    However, the index $[\GL_2(\Z/5^4\Z):B(5^4)]$ is divisible by $5^3$, so $\rho_{E,5^4}(G_K)$ cannot be contained in $B(5^4)$, a contradiction.
    %Lemmas \ref{definiranomodl} and \ref{definedmodpn} prove that there are no cyclic isogenies of degree $p^2$ for $p$ a prime greater than $5$.

    %Lemma \ref{no2or3overell} eliminates $2^5$ and $3^3$.

    %Now it remains to prove $5^4 \not \in \Psi_\Q(5)$ and $5^3 \in \Psi_\Q(5)$. Looking at Table 3 in \cite{rouse-elladic}, we see that all $5$-adic representations of elliptic curves over $\Q$ are defined modulo $25$. Thus, by Lemma \ref{definedmodpn}, any cyclic subgroup $C$ of order $5^4$ satisfies $[\Q(C):\Q(5C)]=[\Q(5C):\Q(25C)]=5$, so $[\Q(C):\Q]\geq 25$, which eliminates $5^4$.

    %Furthermore, taking any non-CM elliptic curve $E/\Q$ with a cyclic subgroup $C'$ of order $25$ fixed by $G_\Q$, and taking a cyclic subgroup $C$ of order $125$ with $5C=C'$, we obtain $[\Q(C):\Q]=[\Q(C):\Q(5C)]=5$, which means $125\in \Psi_\Q(5)$.
\end{proof}

\begin{lemma}
    If $n$ is coprime to $5$ and $n \in \Psi_\Q(5)$, then $n \in \Psi_\Q(1)$.
\end{lemma}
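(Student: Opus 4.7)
The plan is to reduce to prime powers: if $n = \prod \ell_i^{a_i}$ and $E/\Q$ has an $n$-isogeny over a quintic extension $K/\Q$, then $E$ has $\ell_i^{a_i}$-isogenies over $K$ for each $i$; if each of these turns out to actually be defined over $\Q$, the kernels combine (via CRT on the Tate module) to give a cyclic $n$-isogeny over $\Q$, so $n \in \Psi_\Q(1)$. Thus it suffices to prove the statement when $n = \ell^a$ is a prime power coprime to $5$.

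The primes $\ell \in \Psi_\Q(1)\setminus\{5\}$ are $2,3,7,11,13,17,37$, and a quick check shows $5 \mid \ell(\ell-1)$ only for $\ell = 11$. So whenever $\ell \neq 5,11$, Lemma \ref{no2or3overell} immediately gives that any $\ell^a$-isogeny over $K$ is already defined over $\Q$, so $\ell^a \in \Psi_\Q(1)$ and we are done in that case.

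The remaining work is $\ell = 11$. By Proposition \ref{nadneparnimnistanovo}, the $11$-isogeny is defined over $\Q$, so in particular $11 \in \Psi_\Q(1)$; the goal is to exclude $11^a$ for $a \geq 2$, which by taking $11^{a-2}C$ reduces to ruling out $121 \in \Psi_\Q(5)$. Since $E$ has an $11$-isogeny over $\Q$, Lemma \ref{definiranomodl} tells us that $\rho_{E,11^\infty}(G_\Q)$ is defined modulo $11$, i.e.\ contains the pro-$11$ group $N = I + 11\,M_2(\Z_{11})$. The key observation is that this property descends to $K$: the subgroup $\rho_{E,11^\infty}(G_K) \leq \rho_{E,11^\infty}(G_\Q)$ has index dividing $[K:\Q]=5$, and $N \cap \rho_{E,11^\infty}(G_K)$ has index dividing $5$ in $N$, but $N$ is pro-$11$ and $\gcd(5,11)=1$, forcing $N \subseteq \rho_{E,11^\infty}(G_K)$. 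Hence the $11$-adic representation of $E/K$ is also defined modulo $11$.

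Now suppose $C$ is the kernel of a cyclic $121$-isogeny over $K$. Applying Lemma \ref{definedmodpn} over $K$ gives $[K(C):K(11C)] = 11$. But $K(C) = K$ and $\Q \subseteq K(11C) \subseteq K$ with $[K:\Q]=5$, so $[K:K(11C)]$ divides $5$ and cannot equal $11$, a contradiction. Therefore $121 \notin \Psi_\Q(5)$, the only power of $11$ in $\Psi_\Q(5)$ is $11$ itself, and the reduction above completes the proof. The only genuinely non-routine step is the descent of the ``defined modulo $\ell$'' property from $\Q$ to $K$; this is the main obstacle, and it works only because $5$ is coprime to $11$.
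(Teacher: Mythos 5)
Your proof is correct, and it runs on the same chassis as the paper's: reduce to prime powers, kill most primes with the index argument of Lemma \ref{no2or3overell}, and use ``defined modulo $\ell$'' (Lemma \ref{definiranomodl}) together with Lemma \ref{definedmodpn} where the index argument fails. The difference is the division of labour. The paper treats every prime $\ell\geq 7$ uniformly by the defined-mod-$\ell$ argument over $\Q$, so it never needs to enumerate the primes in $\Psi_\Q(1)$; you instead push Lemma \ref{no2or3overell} as far as it goes ($\ell=2,3,7,13,17,37$) and isolate $\ell=11$ as the only relevant prime with $5\mid\ell(\ell-1)$. For your case split to be complete you should justify explicitly why only primes in $\Psi_\Q(1)$ can divide $n$: a prime such as $31\equiv 1\pmod 5$ is covered by neither of your two cases, and it is excluded only because Proposition \ref{nadneparnimnistanovo} converts an $\ell$-isogeny over a quintic (odd-degree) field into one over $\Q$, impossible for $\ell\notin\Psi_\Q(1)$; this is a one-line addition, not a flaw in substance. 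Your handling of $\ell=11$ works: the descent of ``defined modulo $11$'' from $G_\Q$ to $G_K$ is valid, since $N=I+11\,\mathcal{M}_2(\Z_{11})$ is pro-$11$ and $N\cap\rho_{E,11^\infty}(G_K)$ has index at most $5$ in $N$ (note ``at most'', not ``dividing'' --- but that is all you need), hence index $1$. It is, however, avoidable: applying Lemma \ref{definedmodpn} directly over $\Q$ gives $[\Q(C):\Q(11C)]=11$ while $\Q(C)\subseteq K$ has degree dividing $5$, which is essentially how the paper dispatches all $\ell\geq 7$ at once, so the step you single out as the main obstacle can be bypassed. One last cosmetic point: by the paper's definition the field of definition $K(11C)$ contains $K$, so in your final step $K(11C)=K$ and the index is $1$, contradicting Lemma \ref{definedmodpn} even more directly than via the divisibility-by-$5$ remark.
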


\begin{proof}
    It suffices to prove the claim for prime powers. 
    
    If $\ell^k$ is a prime power with $\ell\geq 7$, then the $\ell$-adic representation of $E$ is defined modulo $\ell$ by Lemma \ref{definiranomodl}. But then, using Lemma \ref{definedmodpn}, it follows that any isogeny of degree $\ell^k$ which isn't defined over $\Q$ has to be defined over an extension whose degree is a power of $\ell$, which is a contradiction.
    
    If $\ell^k$ is a prime power with $\ell<5$, the claim immediately follows from Lemma \ref{no2or3overell}.
\end{proof}

It remains to settle numbers divisible by $5$. Note that any $n \in \Psi_\Q(5)$ divisible by $5$ is either a power of $5$, of the form $2^a5^b$, or of the form $3^a5^b$.

The powers of $5$ have been dealt with already.

\begin{lemma}
    Let $n \in \Psi_\Q(5)$ and suppose $n=2^a5^b$ or $3^a5^b$ for positive integers $a$ and $b$. Then $a=1$ and $b<3$.
\end{lemma}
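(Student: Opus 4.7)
The plan is to handle both cases uniformly by writing $n = q^a \cdot 5^b$ with $q \in \{2,3\}$ and $a, b \geq 1$. Since $\gcd(q^a, 5^b) = 1$, a cyclic subgroup $C$ of order $n$ in $E(\overline{K})$, where $K$ is the quintic field of definition, decomposes uniquely as $C = C_q \oplus C_5$ with $|C_q| = q^a$ and $|C_5| = 5^b$. Both summands are $G_K$-stable, being the unique subgroups of those orders inside $C$, so we get a $q^a$-isogeny and a $5^b$-isogeny defined over $K$ simultaneously. Conversely, two $G_K$-stable cyclic subgroups of coprime orders sum to a cyclic subgroup whose order is their product.

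To bound $a$, I apply Lemma \ref{no2or3overell} with $\ell = q$ and $p = 5$. Since $5 \nmid q(q-1)$ for $q \in \{2,3\}$ (indeed $q(q-1) \in \{2,6\}$), the $q^a$-isogeny actually descends to $\Q$. Similarly, Proposition \ref{nadneparnimnistanovo} applied to the $5$-subisogeny of $C_5$ (the quintic field $K$ has odd degree and the exceptional case there concerns $\ell = 7$, not $\ell = 5$) yields a $5$-isogeny over $\Q$. If $a \geq 2$, combining the rational $q^2$-subisogeny with this rational $5$-isogeny produces a rational cyclic $20$-isogeny (when $q=2$) or $45$-isogeny (when $q=3$). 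But $20, 45 \notin \Psi_\Q(1)$, a contradiction. Hence $a = 1$.

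For $b < 3$, assume for contradiction that $b \geq 3$. Then $C_5$ contains a cyclic subgroup of order $125$ over $K$, so Lemma \ref{25-125} gives a $25$-isogeny over $\Q$. Combining this with the rational $q$-subisogeny already obtained in the previous step yields a rational cyclic $50$-isogeny or $75$-isogeny. Since $50, 75 \notin \Psi_\Q(1)$, we reach a contradiction.

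The argument is essentially bookkeeping: there is no new technical obstacle, only a careful packaging of Lemma \ref{no2or3overell}, Proposition \ref{nadneparnimnistanovo}, Lemma \ref{25-125}, and Mazur--Kenku's classification of $\Psi_\Q(1)$. The mild points to verify are the arithmetic conditions $5 \nmid q(q-1)$ enabling Lemma \ref{no2or3overell} for both $q=2$ and $q=3$, and the explicit absence of $20, 45, 50, 75$ from $\Psi_\Q(1) = \{1,\dots,13,15,16,17,18,21,25,37\}$.
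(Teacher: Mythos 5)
Your proof is correct and follows essentially the same route as the paper: use Lemma \ref{no2or3overell} to descend the prime-to-$5$ part (forcing $a=1$ via $20,45\notin\Psi_\Q(1)$) and Lemma \ref{25-125} to descend a $25$-isogeny when $b\ge 3$ (ruled out via $50,75\notin\Psi_\Q(1)$). If anything, you are slightly more careful than the paper, since you make explicit the coprime decomposition of the kernel and the use of Proposition \ref{nadneparnimnistanovo} to obtain the rational $5$-isogeny, steps the paper's terse proof leaves implicit.
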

\begin{proof}
    If $n\in \Psi_\Q(5)$ is divisible by $20$ or by $45$, then, by Lemma \ref{no2or3overell}, there is a $20$-isogeny or a $45$-isogeny defined over $\Q$, which is impossible. Thus $a=1$.

    Now suppose that there is a $2\cdot 125$-isogeny or a $3\cdot 125$-isogeny over some cubic extension. Then Lemma \ref{25-125} and Lemma \ref{no2or3overell} imply the existence of a $2\cdot 25$-isogeny or a $3\cdot 25$-isogeny over $\Q$, which is a contradiction.
\end{proof}

This concludes the proof of Theorem \ref{ell>3}.

\bibliographystyle{abbrv}
\bibliography{lit}

\end{document}